\documentclass[11pt, reqno]{amsart}

\usepackage{amsfonts}
\usepackage{amssymb}
\usepackage{xcolor}
\usepackage{amsthm}
\usepackage{amsmath}
\usepackage{enumerate}

\usepackage[style = nature, sorting = nyt]{biblatex}
\usepackage{graphicx}

\newtheorem{thm}{Theorem}[section]
\newtheorem{cor}[thm]{Corollary}

\newtheorem{lemma}[thm]{Lemma}

\newtheorem{conjecture}{Conjecture}
\newtheorem{rmk}[thm]{Remark}

\newtheorem{defn}[thm]{Definition}
\newtheorem{prop}[thm]{Proposition}
\newtheorem{conj}[thm]{Conjecture}
\newtheorem{question}[thm]{Question}

\numberwithin{equation}{section}
\newcommand{\set}[1]{\left\{#1\right\}}
\begin{document}
\title{On the Willmore energy of Mobius bands}
 \author{Jacob Bernstein}\address{Johns Hopkins University \\ 3400 N. Charles St, Baltimore MD 21218 USA}
	\email{jberns15@jhu.edu}
\author{Daniel Ketover}\address{Rutgers University\\  Busch Campus - Hill Center \\ 110 Freylinghausen Road, Piscataway NJ 08854 USA}

 \email{dk927@math.rutgers.edu}

\begin{abstract}
We show that among M\"obius bands in $\mathbb{S}^3$ bounded by a great circle, the minimal Willmore energy is realized by an embedded minimal M\"obius band with Morse index two.  To prove this, we introduce a $2$-parameter ``canonical family" associated to any non-orientable surface in $\mathbb{S}^3$ with boundary a great circle and apply a min-max argument.  The canonical family detects the Euler number of the surface and is inspired by the $5$-parameter family discovered by F.C. Marques and A. Neves detecting the genus of an orientable surface in $\mathbb{S}^3$.   For $\mathbb{Z}_2$-invariant Klein bottles, this reduces R. Kusner's 1989 conjecture that $\tau_{1,2}$ minimizes the Willmore energy for a Klein bottle immersed in $\mathbb{S}^3$ to any of several conjectural characterizations of the Lawson M\"obius band. 
\end{abstract}

\maketitle

\section{Introduction}

Let $C$ be a great circle in $\mathbb{S}^3$.  In this paper, we study the Willmore energy and conformal area of surfaces immersed in $\mathbb{S}^3$ spanning $C$ and their relationship to minimal surfaces spanning $C$.  Among all such surfaces, the simplest are the hemispheres bounded by $C$ of area $2\pi$ which are minimal and minimize both quantities. By a result of R. Hardt and L. Simon \cite{HS}, if $\Sigma$ is an orientable and embedded minimal surface spanning $C$, then $\Sigma$ is a hemisphere.

 In the class of non-orientable surfaces the picture is richer. In 1970, B. Lawson \cite{Lawson} found, for each $k\geq 1$, an immersed minimal M\"obius band spanning $C$ that we denote $\overline{\tau}_{1,2k}$. These surfaces comprise half of his Klein bottles $\tau_{1,2k}$.  The Lawson bands are given explicitly by an algebraic equation (realizing $\mathbb{S}^3\subset\mathbb{R}^4\cong \mathbb{C}^2$):
\begin{equation}
\bar{\tau}_{1,2k}=\{(z,w)\in\mathbb{S}^3\;|\; \mbox{Im}(z^{2k}\overline{w})=0,\;\mbox{Re}(z^{2k}\bar{w})\geq0\}.
\end{equation}
Among these surfaces, only $\overline{\tau}_{1,2}$ is embedded (while the others have $k$ sheets meeting at equal angles at the polar circle to $C$).  In \cite{BernKetExistence}, the authors found infinitely many minimal embeddings $\tilde{\xi}_{k,m}$\footnote{The surfaces are ``twisted" analogs of the \emph{closed} embedded Lawson surfaces  $\xi_{k,m}$ discovered in 1970 (B. Lawson, \cite{Lawson}) which is the reason for the notation.} with boundary $C$, realizing all non-orientable genera except those of the form $2^n+1$ for some $n\in\mathbb{N}$ as well as all possible Euler numbers. This answered a question of R. Hardt and H. Rosenberg \cite{HR}.

While the hemispheres are the simplest minimal surfaces spanning $C$, we expect the Lawson band, $\bar{\tau}_{1,2}$ to be the second simplest:
\begin{conjecture}\label{secondArea}
Let $C$ be a great circle in $\mathbb{S}^3$.  The minimal surface of second lowest area bounded by $C$ is the Lawson M\"obius band $\overline{\tau}_{1,2}$ with 
\begin{equation}
\mbox{Area}(\overline{\tau}_{1,2})= 4\pi E(\frac{\sqrt{3}}{4})\approx 4.84\pi,
\end{equation}
where $E(a)$ is the second elliptic integral\footnote{With the convention $E(a):=\int_0^1\sqrt{1-a^2\sin(\theta)}d\theta$.}.  
\end{conjecture}
This is analogous to the fact,  shown in \cite{MN}, that  the Clifford torus has second lowest area after the great sphere. 
Indeed, the Lawson band $\bar{\tau}_{1,2}$ satisfies a cubic equation while the Clifford torus, which is $\tau_{1,1}$, satisfies a quadratic equation.

In this paper, we introduce a new canonical two-parameter family associated to any non-orientable surface with boundary $C$.  Using a min-max argument applied to the canonical family, we first obtain a characeterization of the least area minimal M\"obius band spanning $C$:
\begin{thm}\label{mainintro}
Any minimal M\"obius band immersed in $\mathbb{S}^3$ with boundary $C$ of lowest area is embedded, has Morse index $2$, and Euler number $\pm 2$.
\end{thm}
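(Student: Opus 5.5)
The plan is to run a min-max argument over a two-parameter family built from the surface, in the spirit of Marques--Neves, and compare the resulting min-max surface with the least-area band. Let $\Sigma$ be a minimal M\"obius band immersed in $\mathbb{S}^3$ with $\partial\Sigma=C$ and of least area among such bands. I would first attach to $\Sigma$ a two-parameter \emph{canonical family} $\{\Sigma_{v,t}\}$ whose parameter space is a closed disk (or square) $Q$. The construction I have in mind combines:
\begin{itemize}
\item the conformal dilations of $\mathbb{S}^3$ preserving $C$, centered along the polar circle $C^\perp$ (this gives one parameter $v$);
\item the signed parallel surfaces $\{\exp_x(t\nu(x))\}$ (this gives the parameter $t$).
\end{itemize}
Since $\Sigma$ is non-orientable, $\nu$ is only defined up to sign. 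I would therefore work on the orientable double cover, or equivalently glue $\Sigma$ to its image under the rotation by $\pi$ about $C$. This produces a closed immersed surface, a Klein bottle in the case of $\bar\tau_{1,2}$, and the family can be built there. The first goal is the Marques--Neves-type area bound
$$\sup_{(v,t)\in Q}\operatorname{Area}(\Sigma_{v,t})\le W(\Sigma)=\operatorname{Area}(\Sigma),$$
with equality only at the parameter giving $\Sigma$ itself. This should follow from the conformal invariance of the Willmore energy together with the pointwise inequality for parallel surfaces of a minimal surface.

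The second step, which is topological, is to identify the boundary behaviour of the family. As the parameters approach $\partial Q$, the surfaces $\Sigma_{v,t}$ should degenerate to hemispheres spanning $C$ (area $2\pi$) or to lower-dimensional sets. I would show that the induced boundary map $\partial Q\to\{\text{hemispheres bounded by }C\}\cong \mathbb{S}^1$ has degree determined by the Euler number $e(\Sigma)$ of the normal bundle, for instance equal to $e(\Sigma)/2$. This is the sense in which the family ``detects the Euler number.'' From this I would deduce that whenever $e(\Sigma)\neq 0$ the family cannot be homotoped, relative to $\partial Q$, into surfaces of area close to $2\pi$. The case $e(\Sigma)=0$ needs a separate argument. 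Either a band with Euler number $0$ admits a different non-trivial sweepout, or one rules it out directly by a Willmore/Li--Yau-type estimate; Li--Yau also controls multiplicity.

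The third step is to run relative min-max (Almgren--Pitts/Simon--Smith with fixed boundary $C$) on the saturation of the family. This yields an embedded minimal surface $\Gamma$ with $\partial\Gamma=C$ satisfying
$$2\pi<\operatorname{Area}(\Gamma)\le\operatorname{Area}(\Sigma),$$
with index at most $2$, and with genus bounds showing that $\Gamma$ is a hemisphere with multiplicity or a non-orientable band. Hemispheres and multiplicity are excluded by the strict lower bound and by the area of $\Sigma$ being below $4\pi$, which holds since $\bar\tau_{1,2}$ is a competitor with area about $4.84\pi$ and this must be compared to $3\cdot 2\pi$. Hardt--Simon excludes orientable components. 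Hence $\Gamma$ is a M\"obius band, and minimality of $\Sigma$ forces equality of areas. The equality case of the area bound then forces the min-max sequence to concentrate at the top parameter, so $\Gamma=\Sigma$. It follows that $\Sigma$ is embedded, has index at most $2$, and has $|e(\Sigma)|=2$ from the degree computation.

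Finally, index at least $2$ comes from an index count. If $\Sigma$ had index at most $1$, the two-parameter family could be perturbed near its maximum to strictly lower the sup of the area, contradicting the min-max characterization. The main obstacle I expect is the second step: defining the parallel-surface and dilation family correctly for a non-orientable surface with boundary, and proving that its boundary degree equals the Euler number. The equality/rigidity step at the end is the secondary difficulty.
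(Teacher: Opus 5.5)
There are genuine gaps, both in the construction of the family and, more seriously, in how the min-max output is tied back to $\Sigma$.

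\textbf{The family.} The family you describe does not consist of M\"obius bands spanning $C$, so it is not a sweepout in the relevant class. A dilation $F_v$ with $v=sp$, $p\in C^\perp$, does not preserve $C$: for $x\in C$ one gets $F_v(x)=\frac{1-s^2}{1+s^2}x-\frac{2s}{1+s^2}p$, which traces a small circle. The parallel surfaces $\exp_x(t\nu(x))$ move the boundary off $C$. They are also not defined here at all. Neither the band nor its Schwarz double (a Klein bottle) carries a unit normal, and on the orientation double cover the ``parallel surface'' is an annulus double covering $\Sigma$, not a M\"obius band. This is why the paper's canonical family has no parameter beyond the conformal ones. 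It uses only $F_v$ with $v=r(\cos\theta,\sin\theta,0,0)$, i.e.\ dilations centred at points of $C$. These preserve $C$ and already give a disk of parameters. They satisfy $\operatorname{Area}(\Sigma_{r,\theta})\le\operatorname{Area}(\Sigma)$, with equality only at $r=0$ (Proposition \ref{soufi}). As $r\to1$ they converge to the hemisphere determined by the conormal of $\Sigma$ at $v(1,\theta)$, so the boundary map $\partial D^2\to\mathcal{H}\cong\mathbb{S}^1$ has degree $e(\Sigma)$ (Proposition \ref{limits}).

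\textbf{Transferring min-max information to $\Sigma$.} Your step ``equality forces the min-max sequence to concentrate at the top parameter, so $\Gamma=\Sigma$'' is unjustified. The min-max limit comes from an optimal sequence in the homotopy class, not from the canonical family, and nothing forces it to be $\Sigma$. At best you learn that \emph{some} least-area band has index $\le2$, whereas the theorem concerns \emph{every} one.

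The missing idea is the contrapositive deformation. Suppose $\mathrm{ind}_D(\Sigma)\ge3$, and take an odd negative eigenfunction $\phi_3$ orthogonal to $n_1,n_2$. Its mixed second variation with the conformal directions vanishes. So deforming the canonical family near its top by the flow of $\phi_3\mathbf{n}$, and using \eqref{confd} together with uniqueness of the maximum, gives an element of $\Pi_\Sigma$ with maximal area $<m_0$. Proposition \ref{nontrivial} and Corollary \ref{existencecor} then produce a minimal M\"obius band of area $<m_0$, a contradiction.

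Your index $\ge2$ argument runs this backwards: lowering the maximum needs a \emph{third} negative direction. Index $\ge2$ is immediate, since $n_1,n_2$ are independent odd Dirichlet eigenfunctions with eigenvalue $-2$ (Proposition \ref{IndLowBndProp}).

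Embeddedness needs no min-max. Since $m_0\le\operatorname{Area}(\bar{\tau}_{1,2})<6\pi$, Proposition \ref{NoBPProp} applies directly. Then $e(\Sigma)=\pm2$ follows from Whitney--Massey; your degree computation only gives $e\neq0$. This also settles your $e=0$ case, which must be excluded before invoking Proposition \ref{nontrivial}.

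Finally, $\operatorname{Area}(\Sigma)\ge4\pi$, not below it. So area alone cannot exclude a min-max limit of two hemispheres of total mass $4\pi$; the paper rules this out with the mod $2$ boundary-degree condition \eqref{degreeintheorem}.
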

Here the \emph{Euler number} of an immersion  $u:\Sigma\to\mathbb{S}^3$ spanning $C$ is the linking number of the ``boundary push-off" curve $u(\partial(T_\varepsilon(\partial\Sigma))$ with $C$, where $T_\varepsilon(A)$ denotes the $\varepsilon$-tubular neighborhood of $A$ in $\Sigma$ and $\varepsilon$ is small enough.  It encodes important topological information about the surfaces spanning $C$. When $\Sigma$ is orientable, the Euler number is zero while the Lawson band, $\bar{\tau}_{1,2}$ has Euler number $\pm 2$ (Section 2, \cite{BernKetExistence}).

The Lawson band, $\bar{\tau}_{1,2}$ has Morse index $2$ (Appendix B, \cite{BernKetProperties}) and we expect, by analogy with F. Urbano's theorem \cite{Urbano},  this to characterize $\bar{\tau}_{1,2}$:
\begin{conjecture}\label{index2}
A minimal surface in $\mathbb{S}^3$ with boundary $C$ and with Morse index at most $2$ is either a hemisphere or (up to ambient isometry) the Lawson band $\overline{\tau}_{1,2}$.
\end{conjecture}
S. Brendle's \cite{BrendleS} resolution of the Lawson conjecture also suggests:
\begin{conjecture}\label{L2Conj}
An embedded minimal M\"{o}bius band in $\mathbb{S}^3$ spanning $C$ and is (up to ambient isometry) the Lawson band $\overline{\tau}_{1,2}$.
\end{conjecture}
See the list of problems in \cite{BernKetExistence} for related questions.

The validity of either conjecture would show the Lawson band is the minimizer in Theorem \ref{mainintro} and provide evidence for Conjecture \ref{secondArea}.  In fact, we only need a weaker form of Conjecture \ref{index2}: the only index $2$ minimal \emph{M\"obius band} is the Lawson band.  
In \cite{BernKetProperties} the authors prove several results toward Conjecture \ref{index2}.  In particular, we show that all the known higher genus embedded examples $\tilde{\xi}_{k,m}$ have Morse index greater than $2$, that there is no counterexample of genus $2$ (i.e. a punctured Klein bottle) and that any M\"obius band counterexample is in a finite dimensional family.

We also expect the Lawson band to be the simplest M\"{o}bius band, or, more generally, non-orientable, surface spanning $C$. Specifically, for a surface $\Sigma$ immersed in $\mathbb{S}^3$ (possibly with boundary), the Willmore or ``bending" energy is given by (\cite{Pozzetta}, \cite{Schatzle}):
\begin{equation}
\mathcal{W}(\Sigma)=\int_\Sigma (1+\frac{1}{4}|\mathbf{H}_\Sigma|^2)\mbox{dA}+\int_{\partial\Sigma}k_g dL, 
\end{equation}
where if $\partial\Sigma$ is parameterized by the unit speed curve $\gamma(s)$, the function $k_g(s)$ is the inner product of $\nabla^{\mathbb{S}^3}_{\dot{\gamma}(s)}\dot{\gamma}(s)$ with the inner co-normal to $\Sigma$ at $\gamma(s)$.
\begin{conjecture}\label{WillmoreConj}
 If $\Sigma$ is a $C^2$ immersion of a M\"{o}bius band into $\mathbb{S}^3$ that spans $C$, then
 $$ \mathcal{W}(\Sigma)\geq \mathcal{W}(\bar{\tau}_{1,2}) = \mathrm{Area}(\bar{\tau}_{1,2}).$$
\end{conjecture}
This would imply a longstanding a conjecture of R. Kusner \cite{KusnerConj} for Klein bottles that contain and are symmetric about $C$:
\begin{conjecture}[Kusner \cite{KusnerConj} (1989)]
The Klein bottle in $\mathbb{S}^3$ of least possible Willmore energy is the Lawson Klein bottle $\tau_{1,2}$.
\end{conjecture}

As in Marques-Neves' proof of the Willmore Conjecture \cite{MN}, the proof of Theorem \ref{mainintro} implies sharp lower bounds of the Willmore energy of M\"obius bands. In contrast with Marques-Neves result we also obtain sharp lower bounds for conformal volume.\footnote{Their proof gives a lower bound on the Willmore energy for tori but not the conformal volume (which is smaller than $2\pi^2$ in most conformal classes \cite{MontielRos}, \cite{BryantTori}).} 
Let $\mathcal{M}$ denote the collection of minimal M\"obius bands in $\mathbb{S}^3$ with boundary $C$, and let $\mathcal{M}^*\subset\mathcal{M}$ denote those of infimal area $m_0$\footnote{We show in Section \ref{areaboundssection} that such a M\"obius band exists.}.  By Theorem \ref{mainintro}, any element in $\mathcal{M}^*$ has Morse index $2$, Euler number $\pm 2$ and is embedded.

For M\"obius bands we obtain:
\begin{thm}\label{willmoreband}
Let $\Sigma$ be a M\"obius band that is $C^2$-immersed in $\mathbb{S}^3$ and spanning $C$. Then
\begin{equation}\label{equality}
\mathcal{W}(\Sigma)\geq m_0, 
\end{equation}
with equality if and only if $\Sigma$ is a conformal dilate of an element in $\mathcal{M}^*$.
\end{thm}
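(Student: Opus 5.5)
The plan follows Marques--Neves. To a $C^2$ Möbius band $\Sigma$ spanning $C$ we attach the two-parameter canonical family $\{\Sigma_{v}\}$. The family is built from conformal dilations centered at points of the appropriate ball, or half-ball, compatible with the boundary circle $C$, each followed by a suitable normal pushing. The first step is an upper bound on the family: for every parameter $v$,
\[
\mathrm{Area}(\Sigma_v)\le \mathcal{W}(\Sigma).
\]
This comes from the conformal invariance of $\mathcal{W}$ with the boundary term included. Dilations centered on the axis preserving $C$ map $C$ to itself. For the boundary term we use that $C$ is a geodesic, so $k_g$ transforms correctly and the inequality $\mathrm{Area}(F\circ\Sigma)\le\mathcal{W}(F\circ\Sigma)=\mathcal{W}(\Sigma)$ holds. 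The normal-pushing part of the family needs the Heintze--Karcher/Ros type estimate used by Marques--Neves, which I would adapt to surfaces with geodesic boundary.

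The second step is to show that the family is topologically nontrivial. The boundary behaviour of the family (degenerate dilations collapse $\Sigma$ onto hemispheres through $C$) should detect the Euler number, which is nonzero for a Möbius band. Since the Euler number is odd$\cdot$? — more precisely, it is $\pm2 \pmod 4$ — the boundary map from the parameter-space boundary to the space of hemispheres is nontrivial in $\mathbb{Z}_2$-homology. This is the property we assume the canonical family was designed to have. Running min-max on the homotopy class then produces a minimal surface $\Gamma$ spanning $C$ with
\[
\mathrm{Area}(\Gamma)=\text{width}\le\sup_v\mathrm{Area}(\Sigma_v)\le\mathcal{W}(\Sigma).
\]
Following the argument behind Theorem~\ref{mainintro}, the width exceeds $2\pi$ and $\Gamma$ is non-orientable. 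The area and index bounds, together with the genus bounds of Simon--Smith type, force $\Gamma$ to be a Möbius band, or else a surface whose area dominates $m_0$. Hence the width is at least $m_0$, which proves \eqref{equality}.

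For equality, suppose $\mathcal{W}(\Sigma)=m_0$. Then $\sup_v\mathrm{Area}(\Sigma_v)=m_0$ equals the width. The standard argument (as in Marques--Neves, Theorem~B) shows that some $\Sigma_{v_0}$ with $v_0$ in the interior realises the width. Moreover $\Sigma_{v_0}$ must be stationary, since otherwise a tightening deformation would lower the maximum. The pushing parameter must be zero at $v_0$, because strict inequality holds for nonzero pushes unless the surface is already minimal. Hence $\Sigma_{v_0}=F_{v_0}(\Sigma)$ is a minimal Möbius band of area $m_0$, that is, an element of $\mathcal{M}^*$, and $\Sigma$ is its conformal dilate. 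Conversely, a conformal dilate of an element of $\mathcal{M}^*$ has $\mathcal{W}=m_0$ by conformal invariance, because for a minimal surface with geodesic boundary $\mathcal{W}$ equals the area.

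The main obstacle is the first step: establishing $\mathrm{Area}(\Sigma_v)\le\mathcal{W}(\Sigma)$ uniformly, and in particular controlling the boundary term $\int_{\partial\Sigma}k_g$ under the normal pushes near $C$, where the surface is only $C^2$ and the parallel surfaces do not keep the boundary on $C$. I would first attempt to reflect $\Sigma$ across $C$ using the rotation by $\pi$ about $C$. This produces a closed, $\mathbb{Z}_2$-invariant Klein bottle, possibly branched or only $C^{1,1}$ along $C$, with Willmore energy $2\mathcal{W}(\Sigma)$. The closed-surface Ros/Marques--Neves inequality would then apply equivariantly, and halving the result gives the bound for $\Sigma$.
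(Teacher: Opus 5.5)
There is a genuine gap in your second step. You assert that the Euler number of a M\"obius band is nonzero, indeed $\equiv 2 \pmod 4$. That is the Whitney--Massey theorem for \emph{embedded} bands, but the statement concerns $C^2$ immersions. Immersed bands spanning $C$ can have $e(\Sigma)=0$ (remove a small disk from Boy's surface), and $e(\bar\tau_{1,4})=4$. When $e(\Sigma)=0$, the boundary map $F_\Sigma:\partial D^2\to\mathcal{H}\cong\mathbb{S}^1$ has degree zero. Nothing then prevents $w(\Pi_\Sigma)=2\pi$, and min-max gives only the trivial bound.

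The paper handles this case by a separate, non-variational argument (Proposition \ref{eulerzero}). With density $\frac12$ on $C$ and $e(\Sigma)=0$, the part of $\Sigma$ in the complementary solid torus is a M\"obius band whose boundary bounds a meridian disk. It therefore lifts to the universal cover $D^2\times\mathbb{R}$ and can be capped off to an immersed $\mathbb{RP}^2$ in $\mathbb{R}^3$ with no new triple points. Banchoff's theorem then forces a triple point on $\Sigma$, and Li--Yau gives $\mathcal{V}_c(\Sigma)\ge 12\pi$. Hence $\mathcal{W}(\Sigma)=\frac12\mathcal{W}(\Sigma')\ge\frac12\mathcal{V}_c(\Sigma')\ge\frac12\mathcal{V}_c(\Sigma)\ge 6\pi>m_0$.

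Even for $e(\Sigma)\neq 0$ your topological mechanism is misstated. By Lemma \ref{iseven}, $e(\Sigma)$ is always even, so $F_\Sigma$ induces zero on $H_1(\,\cdot\,;\mathbb{Z}_2)$. What Proposition \ref{nontrivial} actually uses is the integer degree: a map $\partial D^2\to\mathbb{S}^1$ of nonzero degree does not extend over $D^2$.

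The step you flag as the main obstacle comes only from the normal pushes, which the paper does not use. As you set them up, they would leave the spanning class, since the boundary moves off $C$. Moreover, the Ros/Marques--Neves parallel-surface estimate needs an embedded surface bounding a region, and neither an immersed non-orientable band nor its reflected Klein bottle is one. The canonical family consists only of the dilations $F_{v(r,\theta)}$ with $v$ in the unit disk of the $\mathbf{e}_1\mathbf{e}_2$-plane; centres off that plane move $C$. The bound $\mathrm{Area}(\Sigma_{r,\theta})\le\mathcal{W}(\Sigma_{r,\theta})=\mathcal{W}(\Sigma)$ is then immediate from conformal invariance and $k_g=0$ on the geodesic $C$.

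Two smaller points on the min-max step. First, it must be the parametric min-max for maps (Theorem \ref{minmaxtheorem}), because the sweepouts consist of immersed surfaces and Simon--Smith does not apply. Second, you should first reduce to $\mathcal{W}(\Sigma)<6\pi$, which is allowed since $m_0<6\pi$. Then the area and degree-parity count of Corollary \ref{existencecor} rules out multiple hemispheres and sphere bubbles, leaving a single minimal M\"obius band of area $w(\Pi_\Sigma)\ge m_0$.

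For the equality case no tightening argument is needed. If $\mathcal{W}(\Sigma)=m_0$, then $\sup_{D^2}\mathrm{Area}(\Sigma_{r,\theta})=m_0>2\pi$. Since the boundary values are hemispheres of area $2\pi$, this supremum is attained at an interior point $(r_0,\theta_0)$. There $\mathrm{Area}(\Sigma_{r_0,\theta_0})=\mathcal{W}(\Sigma_{r_0,\theta_0})$ forces $\mathbf{H}\equiv 0$, so $\Sigma_{r_0,\theta_0}\in\mathcal{M}^*$. The paper instead routes this through Theorem \ref{conflowerbounds} and the rigidity Theorem \ref{rigidity}.
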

In contrast, M. Pozzetta \cite{Pozzetta} showed that for orientable surfaces of fixed positive genus bounded by $C$, an immersion with infimal $L^2$ norm of its mean curvature (which is not a conformally invariant quantity in $\mathbb{R}^3$) is \emph{not} attained.
This reduces Kusner's conjecture for Klein bottles containing and symmetric with respect to $C$ to either Conjecture \ref{index2} or \ref{L2Conj}.
\begin{thm}\label{wd}
Let $\Sigma$ be a Klein bottle immersed in $\mathbb{S}^3$ containing the closed geodesic $C$ and invariant with respect to Schwarz reflection through $C$.  Then
\begin{equation}
\mathcal{W}(\Sigma)\geq 2m_0,
\end{equation}
with equality if and only if $\Sigma$ is a conformal dilate of the Schwarz reflection about $C$ of an element in $\mathcal{M}^*$.
\end{thm}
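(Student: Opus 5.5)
The plan is to reduce Theorem \ref{wd} to Theorem \ref{willmoreband} by cutting $\Sigma$ along $C$. Let $R$ be Schwarz reflection through $C$, the isometry of $\mathbb{S}^3$ fixing $C$ pointwise and acting as $-1$ on its normal bundle. Since $\Sigma$ contains $C$ and $R(\Sigma)=\Sigma$, the first step is to show that $C$ separates the Klein bottle into two pieces $\Sigma_1$ and $\Sigma_2=R(\Sigma_1)$, each a surface spanning $C$. The topological input: $R$ restricted to $\Sigma$ is an involution of a Klein bottle whose fixed set contains the circle $C$. Because $R$ acts by $-1$ on the normal plane of $C$ and preserves the tangent plane of $\Sigma$ along $C$, it reverses the conormal, so $R|_\Sigma$ is locally a reflection across $C$ and swaps the two sides. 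Hence $\Sigma\setminus C$ has two components exchanged by $R$, or $C$ is one-sided. Since a Klein bottle has Euler characteristic $0$, each half has $\chi=0$ and one boundary circle, so it is a M\"obius band. (If $C$ were one-sided, a neighborhood of $C$ would be a M\"obius band on which $R$ acts without a consistent side swap, which contradicts $R$ fixing $C$ and reversing the conormal; I would rule this out this way.) I will also need to handle the case where $\Sigma$ meets $C$ at other points or several sheets pass through $C$. I would interpret ``containing $C$'' as $C$ being the image of an embedded curve in the domain along which $\Sigma$ is $C^2$, and cut only along that curve.

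The second step is the energy splitting. Along $C$ each half $\Sigma_i$ is $C^2$ up to the boundary and spans the geodesic $C$, so $\nabla_{\dot\gamma}\dot\gamma=0$ and the boundary term $\int_{\partial\Sigma_i}k_g\,dL$ vanishes. The interior integrand $1+\tfrac14|\mathbf H|^2$ is integrated over disjoint pieces, and $C$ has measure zero. Therefore $\mathcal W(\Sigma)=\mathcal W(\Sigma_1)+\mathcal W(\Sigma_2)=2\mathcal W(\Sigma_1)$, using that $R$ is an isometry. Theorem \ref{willmoreband} then gives $\mathcal W(\Sigma_1)\ge m_0$, hence $\mathcal W(\Sigma)\ge 2m_0$.

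For the equality case, if $\mathcal W(\Sigma)=2m_0$ then $\mathcal W(\Sigma_1)=m_0$, so $\Sigma_1=\phi(M)$ for some $M\in\mathcal M^*$ and some conformal dilation $\phi$. The dilate must still span $C$, so $\phi$ preserves $C$; I would take dilations along the axis orthogonal to $C$, which commute with $R$. Then $\Sigma=\Sigma_1\cup R(\Sigma_1)=\phi(M\cup R(M))$, which is the dilate of the Schwarz reflection of $M$. Conversely, Schwarz reflection of $M\in\mathcal M^*$ yields a smooth minimal Klein bottle of area $2m_0$. Willmore energy is conformally invariant for closed surfaces, so its dilates also have $\mathcal W=2m_0$.

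The main obstacle I expect is the topological step: showing that $C$ is two-sided and separating, and that the halves are M\"obius bands rather than, say, annuli. If a half were an annulus, each of its two boundary components would have to map to $C$, which is inconsistent with cutting along a single curve. I would verify the M\"obius band conclusion via Euler characteristic together with the count of boundary components. A secondary concern is that the dilation $\phi$ in Theorem \ref{willmoreband} commutes with $R$, which I would check directly.
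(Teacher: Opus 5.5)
Your core argument is correct, and it takes a different route from the paper. You cut $\Sigma$ along $C$ and use $\mathcal{W}(\Sigma)=2\mathcal{W}(\Sigma_1)$; the paper records this identity too, and it holds because the boundary term vanishes on the geodesic $C$. You then apply Theorem~\ref{willmoreband} to the half $\Sigma_1$.

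The paper instead goes through conformal volume. Corollary~\ref{doublesit}, obtained from Theorem~\ref{conflowerbounds}, \eqref{twicegood} and \eqref{better}, gives $\mathcal{V}_c(\Sigma)\ge\mathcal{V}^C_c(\Sigma)=2\mathcal{V}^C_c(\Sigma_1)\ge 2m_0$. The closed-surface Li--Yau inequality $\mathcal{W}(\Sigma)\ge\mathcal{V}_c(\Sigma)$ then finishes, and the equality case is read off from Corollary~\ref{doublesit}.

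Both routes rest on Theorem~\ref{conflowerbounds}, since Theorem~\ref{willmoreband} is itself proved from it via $\mathcal{V}^C_c(\Sigma_1)\le\mathcal{W}(\Sigma_1)$. The paper's route gives the conformal-volume bound for the Klein bottle as a byproduct and never uses the boundary form of $\mathcal{W}$. Yours is more direct.

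Your equality argument is clean: $\phi(C)=\phi(\partial M)=\partial\Sigma_1=C$, and any conformal map preserving $C$ commutes with $R$. This is because, in the Lorentzian model, such a map preserves the $3$-space corresponding to $C$ and its orthogonal complement. One correction: the $C$-preserving dilations are the $F_v$ with $v$ in the plane spanned by $C$. Those with $v$ orthogonal to that plane move $C$ off itself.

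Your topological step, however, is wrong as written. The paper never proves it: it tacitly takes $\Sigma=\Sigma_1\cup R(\Sigma_1)$ with $\Sigma_1$ a M\"obius band spanning $C$ with density $\frac12$ on $C$, which is how \eqref{twicegood} is set up. There are two problems with your version.

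First, the one-sided case cannot be excluded locally. The band $[0,1]\times[-1,1]/(0,t)\sim(1,-t)$ carries the well-defined involution $(s,t)\mapsto(s,-t)$, which fixes the core and reverses the conormal everywhere. It is realized in $\mathbb{S}^3$ by the $R$-invariant band $\{(\cos s\,e^{i\alpha},\sin s\,e^{i\alpha/2})\}$ about $C$.

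Second, you omit the two-sided non-separating case, where $\Sigma\setminus C$ is a single annulus.

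What actually rules out both cases is a global argument, assuming only one sheet meets $C$, so that the lifted involution $\rho$ has $\mathrm{Fix}(\rho)=\gamma$.
\begin{itemize}
\item In the non-separating case, $\rho$ induces an involution of the cut-open annulus that swaps the boundary circles. It acts by $-1$ on $H_1$, because the gluing producing a Klein bottle reverses the circle direction. Its Lefschetz number is therefore $2$, so it has interior fixed points.
\item In the one-sided case, $\rho$ would act freely on the complementary M\"obius band, and the quotient would again be a M\"obius band. This is impossible, since the only connected double cover of a M\"obius band is an annulus. Equivalently, the boundary of a M\"obius neighborhood of $C$ links $C$ an odd number of times, yet it bounds the rest of $\Sigma$, which misses $C$.
\end{itemize}
Any extra fixed point of $\rho$ lies over $\mathrm{Fix}(R)=C$, that is, on another sheet through $C$.

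If several sheets do pass through $C$, cutting only along $\gamma$ does not help. The splitting can fail, and even when it holds, $\Sigma_1$ loses the density-$\frac12$ condition on which Proposition~\ref{nontrivial}, and hence Theorem~\ref{willmoreband}, depends. So either adopt the paper's reading of the hypothesis, or add the single-sheet assumption together with the argument above.
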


Recall, the notion of \emph{conformal volume} $\mathcal{V}_c(\Sigma)$ introduced by Li-Yau \cite{LY}:
\begin{equation}\label{liyau}
\mathcal{V}_c(\Sigma)=\sup_{v\in B^4}\mbox{Area}(F_v(\Sigma)), 
\end{equation}
where $\{F_v\}_{v\in B^4}$ is the $4$-ball of pure dilations in the conformal group of $\mathbb{S}^3$ -- Section \ref{conformalfamily} for the precise definition.  Li-Yau \cite{LY} proved for a closed immersion $\Sigma\subset\mathbb{S}^3$ there holds
\begin{equation}
\mathcal{W}(\Sigma)\geq \mathcal{V}_c(\Sigma).
\end{equation}
We obtain lower bounds on the  conformal volume:
\begin{thm} \label{ConfAreaLBThm}
If $\Sigma\subset\mathbb{S}^3$ is a $C^1$ immersion of a M\"obius band spanning $C$, then
\begin{equation}
\mathcal{V}_c(\Sigma)\geq m_0
\end{equation}
with equality if and only if $\Sigma$ is a conformal dilate of an element in $\mathcal{M}^*$.
\end{thm}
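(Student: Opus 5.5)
We would prove Theorem \ref{ConfAreaLBThm} with the same min-max argument used for Theorem \ref{mainintro} and Theorem \ref{willmoreband}. The only change is that the Willmore bound on the areas in the canonical family is replaced by a bound coming from the conformal volume. The first step is to recall how the canonical two-parameter family is built from a non-orientable surface $\Sigma$ spanning $C$. We expect each member to be, up to an area-nonincreasing operation, an image $F_v(\Sigma)$ with $v$ ranging over a two-dimensional slice of $B^4$: the dilations whose centers lie in the totally geodesic sphere that is orthogonal to $C$, or an equivalent subgroup preserving $C$. We would then compactify this slice to a disc parameter space. On its boundary the family should degenerate to hemispheres spanning $C$ or to their images, which have area at most $2\pi<m_0$. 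The homotopy class of the family is detected by the Euler number of $\Sigma$, which is nonzero ($\pm 2$ in the relevant case) for a M\"obius band. Hence the min-max width $W$ of the saturated class satisfies $W\geq m_0$. This follows from the min-max theory developed for Theorem \ref{mainintro}: the width is attained by a minimal M\"obius band spanning $C$ (possibly with multiplicity or hemisphere pieces, which area and topology rule out), and every such band has area at least $m_0$.

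The second step is the pointwise bound. For every parameter $v$ we have $\mathrm{Area}(F_v(\Sigma))\leq \mathcal{V}_c(\Sigma)$ directly from the definition (\ref{liyau}). If the canonical family also contains boundary-modification terms (for example, reflected pieces glued along $C$), we would check that these add no area, since they are obtained by symmetrization or isometries. This only needs $\Sigma$ to be $C^1$, because area is defined for $C^1$ immersions and the dilations are smooth. Consequently $m_0\leq W\leq \sup_v \mathrm{Area}(F_v(\Sigma))\leq \mathcal{V}_c(\Sigma)$. The $C^1$ hypothesis will require an approximation argument: we approximate $\Sigma$ in $C^1$ by smooth immersions with the same boundary and the same Euler number, using that the Euler number is locally constant under $C^1$-small perturbations fixing $C$, and we use continuity of area under $C^1$ convergence uniformly in $v$ on compact subsets.

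For rigidity, suppose $\mathcal{V}_c(\Sigma)=m_0$. Then the canonical family of $\Sigma$ is an optimal sweepout. We would apply the standard argument of Marques--Neves: if no member of the family were close to a critical varifold, the family could be pulled down to give width below $m_0$. Since the maximum is attained, some $F_{v_0}(\Sigma)$ with $v_0$ interior attains area $m_0$, and the optimal-sweepout property forces $F_{v_0}(\Sigma)$ to be stationary. This step is the main obstacle. To make the deformation argument produce stationarity of an actual member, and not merely of a limit, we would exploit that $v\mapsto \mathrm{Area}(F_v(\Sigma))$ is smooth with a critical point at $v_0$. We would then show that if $F_{v_0}(\Sigma)$ had nonzero first variation, a local area-decreasing deformation supported away from $C$ could be inserted into the family near $v_0$, contradicting $W\geq m_0$. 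Once $F_{v_0}(\Sigma)$ is a minimal M\"obius band of area $m_0$, it lies in $\mathcal{M}^*$, so $\Sigma$ is a conformal dilate of an element of $\mathcal{M}^*$. Conversely, for $\Sigma\in\mathcal{M}^*$ the Li--Yau-type fact that a minimal surface maximizes area among its conformal dilates, with boundary on the great circle preserved, gives $\mathcal{V}_c(\Sigma)=m_0$.
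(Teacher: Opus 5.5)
\textbf{The gap: the case $e(\Sigma)=0$.} Your first step asserts that the Euler number of a M\"obius band spanning $C$ is nonzero. That is the Whitney--Massey statement for \emph{embedded} bands. The theorem, however, is about $C^1$ immersions, and immersed bands spanning $C$ can have $e(\Sigma)=0$; the paper's example is Boy's surface with a small disc removed. For such $\Sigma$ the boundary map $\theta\mapsto E_\Sigma(\theta)\in\mathcal{H}\cong\mathbb{S}^1$ has degree $0$, so the degree argument of Proposition \ref{nontrivial} gives nothing. Nothing prevents the family from being homotoped close to $\mathcal{H}$, the width may well be $2\pi$ (the paper leaves this open), and min-max then produces no M\"obius band. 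So in this case you get neither $w(\Pi_\Sigma)\ge m_0$ nor the inequality $W\ge m_0$ that your rigidity step relies on.

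The missing idea is a separate topological argument, which is how the paper handles this case (Proposition \ref{eulerzero}):
\begin{enumerate}
\item With density $\frac12$ on $C$ and $e(\Sigma)=0$, the band meets the boundary of a thin tubular neighbourhood of $C$ in a curve bounding a meridian disc of the complementary solid torus $T$.
\item Since $\partial M$ is twice a generator of $\pi_1(M)$ and $\pi_1(T)\cong\mathbb{Z}$ is torsion free, $\Sigma\cap T$ lifts to the universal cover $D^2\times\mathbb{R}$.
\item There it can be capped off by an annulus and a disc to give an immersed $\mathbb{RP}^2$ in $\mathbb{R}^3$ with no new triple points.
\item Banchoff's theorem \cite{Banchoff} then forces a triple point of $\Sigma$, and dilating toward it gives $\mathcal{V}_c(\Sigma)\ge 12\pi>m_0$. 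In particular, equality can only occur when $e(\Sigma)\neq 0$.
\end{enumerate}
Only for $e(\Sigma)\neq 0$ does your min-max chain $m_0\le w(\Pi_\Sigma)\le\mathcal{V}^C_c(\Sigma)\le\mathcal{V}_c(\Sigma)$ apply, and there it is exactly the paper's argument, after the reduction to $\mathcal{V}^C_c(\Sigma)<6\pi$. That reduction also supplies the density-$\frac12$ condition on $C$. You never mention this condition, but it is needed to define $e(\Sigma)$ and the hemisphere limits of the family: a second sheet through a point $p\in C$ makes the areas tend to $6\pi$ as $v(r,\theta)\to p$.

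\textbf{Smaller points.}
\begin{itemize}
\item The canonical family is just $F_{v}(\Sigma)$ with $v=r(\cos\theta,\sin\theta,0,0)$ in the plane of $C$. Dilations centred orthogonally to $C$ do not preserve $C$, and no reflected pieces are involved. The family is defined directly for $C^1$ immersions, so no smoothing is needed.
\item For the equality case the paper invokes Theorem \ref{rigidity}. Your pull-tight route is a reasonable self-contained alternative, but it cannot show that a \emph{given} maximizer $F_{v_0}(\Sigma)$ is stationary. You must deform simultaneously near the whole compact, interior set of maximizers, which shows only that \emph{some} maximizer is stationary relative to $C$. You then still need to check that a stationary $C^1$ member is a genuine minimal immersion, e.g.\ via Schwarz reflection and elliptic regularity.
\item Your converse only controls dilations preserving $C$. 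Li--Yau (Proposition \ref{soufi}) therefore gives $\mathcal{V}^C_c(\Sigma)=m_0$ for $\Sigma\in\mathcal{M}^*$, not $\mathcal{V}_c(\Sigma)=m_0$. For general $v\in B^4$ the circle $F_v(C)$ is not a geodesic, so the boundary term in the conformally invariant Willmore energy no longer vanishes and area can increase; a hemisphere, for instance, has $\mathcal{V}_c=4\pi$. Proposition \ref{soufi}, which is the tool the paper has for this direction, likewise only concerns $\mathcal{V}^C_c$.
\end{itemize}
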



\subsection{Connections with closed surfaces in $\mathbb{S}^3$}
It is a consequence of the monotonicity formula that the closed minimal surface in $\mathbb{S}^3$ of lowest area is an equatorial two-sphere with area $4\pi$. In 2014, in their proof of the Willmore Conjecture, F.C. Marques and A. Neves \cite{MN} showed the Clifford torus was the minimal surface of second lowest area:

\begin{thm}[Marques-Neves \cite{MN} (2014)]\label{marquesneves}
The closed minimal surface of second lowest area in $\mathbb{S}^3$ is (up to ambient isometry) the Clifford torus, i.e., $\tau_{1,1}$
which has $\mbox{Area}(\tau_{1,1})=2\pi^2$.
\end{thm}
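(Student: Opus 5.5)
The statement is the Marques--Neves theorem, so the plan is to recall their proof rather than reprove it from scratch. I would run min-max over their canonical five-parameter family, which is attached to any closed embedded surface $\Sigma\subset\mathbb{S}^3$ of genus $g\geq 1$. For $(v,t)\in B^4\times[-\pi,\pi]$, first apply the conformal dilation $F_v$ to $\Sigma$. Then take the boundary of the region at signed distance $t$ from $F_v(\Sigma)$. This gives a map
\[
\Phi:\overline{B^4}\times[-\pi,\pi]\to\mathcal{Z}_2(\mathbb{S}^3).
\]
Since $\Phi(v,\pm\pi)=0$ we can identify the ends, and $\Phi$ becomes a map from $I^5$.

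The first key step is the area estimate $\mathrm{Area}(\Phi(v,t))\leq\mathcal{W}(\Sigma)$. It follows from the pointwise Jacobian computation for parallel surfaces, combined with the conformal invariance of $\int(1+\tfrac14|\mathbf{H}|^2)$. For a minimal $\Sigma$ the maximum of area over the family is then $\mathrm{Area}(\Sigma)$, attained at $(0,0)$.

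The second step is the boundary behavior as $|v|\to1$. There $\Phi(v,t)$ converges to round spheres centered at $v/|v|$ of radius depending on $t$. The restriction of $\Phi$ to $\partial I^5$ therefore factors through a map $\mathbb{S}^3\to\mathbb{S}^3$ given, up to homotopy, by the center map $v\mapsto v/|v|$ composed with the Gauss-type map of $\Sigma$. When $g\geq1$ the degree of this map is $\pm g\neq0$; this is the topological heart of the argument. It shows that the family cannot be contracted, relative to its boundary, into the space of round spheres.

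The third step applies Almgren--Pitts min-max to the homotopy class of such five-parameter families relative to their boundaries. The width $W$ satisfies $W>4\pi$ because of the nontrivial degree, via Almgren's isomorphism and a pull-tight argument. It is achieved by a smooth embedded minimal surface $S$ of index at most five. I expect the main obstacle here: excluding that $S$ is a great sphere with multiplicity, and proving that the Morse index is exactly five. Marques--Neves handle this by showing that index-five minimal surfaces are Clifford tori through Urbano's theorem. Urbano's theorem says that a closed minimal surface in $\mathbb{S}^3$ of index at most five is either a great sphere (index one) or a Clifford torus. Index one is excluded because the width exceeds $4\pi$.

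Finally, let $\Sigma$ be any closed minimal surface other than a great sphere. If $\Sigma$ is non-orientable or immersed, the area is at least $8\pi>2\pi^2$ by Li--Yau. Otherwise $\Sigma$ is embedded of genus at least one, and its canonical family lies in the min-max class. Hence
\[
\mathrm{Area}(\Sigma)=\max\mathrm{Area}(\Phi)\geq W=\mathrm{Area}(\tau_{1,1})=2\pi^2,
\]
which gives the theorem.
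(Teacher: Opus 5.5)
Your outline is the Marques--Neves argument that the paper quotes from \cite{MN} and only sketches: the canonical five-parameter family with areas bounded by $\mathcal{W}(\Sigma)$, a boundary degree equal to the genus, width $W>4\pi$, an index $\le 5$ min-max surface, and Urbano's theorem \cite{Urbano}. As written, however, it stops short of the statement. The theorem says the Clifford torus is, up to isometry, \emph{the} minimal surface of second lowest area, while your final chain only gives $\mathrm{Area}(\Sigma)\ge 2\pi^2$. If $\mathrm{Area}(\Sigma)=2\pi^2$, you learn only that the canonical family of $\Sigma$ is optimal. Nothing shows that $\Sigma$ itself has index at most five, so Urbano's theorem cannot be applied to $\Sigma$.

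The missing step is the index-perturbation argument, the same device the paper uses in the proof of Theorem~\ref{leastareamb} (after \cite{MNRigidity}). Suppose $\Sigma$ is not congruent to $\tau_{1,1}$; then Urbano gives $\mathrm{ind}(\Sigma)\ge 6$. The first-order variations of the family are the functions $\langle v,\mathbf{n}\rangle$, $v\in\mathbb{R}^4$, together with the constant $1$ from the parallel surfaces. Because $\Sigma$ is not totally geodesic, the second variation is negative definite on their five-dimensional span. Hence there is a negative direction $\phi$ orthogonal to this span for the second variation form. For minimal $\Sigma$ the maximum $\mathrm{Area}(\Sigma)$ of the family is attained only at $(v,t)=(0,0)$. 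Flowing briefly along $\phi\mathbf{n}$ therefore gives a family in the same class with maximum strictly below $2\pi^2$. This yields $W<2\pi^2$, contradicting the min-max lower bound.

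Two smaller points in the same step. The equality $W=\mathrm{Area}(\tau_{1,1})$ is not justified, since $W>4\pi$ does not rule out a great sphere of multiplicity two; you only get $W\ge 2\pi^2$. Marques--Neves avoid this by arguing by contradiction from an upper bound below $2\pi^2<8\pi$, which forces multiplicity one. You also need Almgren's theorem, that minimal spheres in $\mathbb{S}^3$ are great spheres, to get genus at least one.

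Your account of the boundary behavior also needs correcting. The family does not extend continuously to $|v|=1$. As $v\to p\in\Sigma$, the limit depends on how $v$ approaches $p$: one obtains round spheres of every size, including the great sphere tangent to $\Sigma$ at $p$, positioned by $\mathbf{n}(p)$. Marques--Neves therefore reparametrize by blowing up $\Sigma\subset\partial B^4$. On the blown-up region the centers of the limiting spheres move along arcs through $\pm\mathbf{n}(p)$, and it is this region that makes the degree of the center map $\partial I^4\to\mathbb{S}^3$ equal to the genus. A boundary map given, even up to homotopy, by $v\mapsto\pm v/|v|$ has degree one for every genus. It would then force $W>4\pi$ even for a great sphere, whose family has maximum $4\pi$. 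So ``composed with the Gauss map'' does not capture where the genus enters.
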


The proof by Marques-Neves of Theorem \ref{marquesneves} was variational and used Almgren-Pitts min-max theory (\cite{Almgren}, \cite{Pi}).  They considered a $5$-parameter ``canonical family" of surfaces associated to a minimal surface $\Sigma$ of genus at least $1$, with areas bounded above by that of $\Sigma$.  By optimizing over this family, they could obtain a minimal surface with area at most that of $\Sigma$ and Morse index at most $5$. By a result of F. Urbano \cite{Urbano}, the only minimal surfaces in $\mathbb{S}^3$ with index at most $5$ are great spheres and Clifford tori.  Using a topological degree argument, they showed they could not obtain a great sphere.  Our strategy in proving Theorem \ref{mainintro}, sketched in the next subsection, is inspired by their approach.

The infimal Willmore energy of a sphere immersed in $\mathbb{R}^3$ is $4\pi$. 
As the Willmore energy of a surface bounds from above the areas of elements in the $5$-parameter canonical family (without the minimality assumption), Marques-Neves were able to solve the Willmore Conjecture:
\begin{thm}[Marques-Neves 2014]
The Willmore energy of an immersed surface in $\mathbb{S}^3$ of positive genus  is bounded from below the Willmore energy of the Clifford torus.  
\end{thm}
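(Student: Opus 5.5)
The plan follows the variational strategy of Marques--Neves sketched above. First we reduce to embedded surfaces: if $\Sigma$ is immersed but not embedded, it has a point of multiplicity at least $2$, and the Li--Yau inequality gives $\mathcal{W}(\Sigma)\geq 8\pi>2\pi^2$. So assume $\Sigma\subset\mathbb{S}^3$ is an embedded closed surface of genus $g\geq 1$. It bounds two regions; fix one, $A$. To $\Sigma$ we associate the $5$-parameter canonical family. For $v\in B^4$, let $F_v$ be the conformal dilation. For $t\in[-\pi,\pi]$, let $\Sigma_{(v,t)}$ be the boundary of the set of points at signed distance less than $t$ from $F_v(A)$, with the convention that it is empty or all of $\mathbb{S}^3$ at $t=\pm\pi$.

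The first key step is the area bound
\[
\mathrm{Area}(\Sigma_{(v,t)})\leq \mathcal{W}(\Sigma)\quad\text{for all }(v,t).
\]
I would prove it by the standard parallel-surface computation. The normal exponential map pulls back the area element to $(\cos t-\tfrac{H}{2}\sin t)^2-\tfrac14|\mathring{A}|^2\sin^2 t$ times the area element of $F_v(\Sigma)$. The first term is at most $1+\tfrac14H^2$ by Cauchy--Schwarz, and the second term only decreases the area. Together with conformal invariance $\mathcal{W}(F_v(\Sigma))=\mathcal{W}(\Sigma)$, this gives the bound.

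The second step is boundary behaviour. As $v\to p\in\mathbb{S}^3=\partial B^4$ with $p\notin\Sigma$, the surfaces $F_v(\Sigma)$ collapse to a point, so the family degenerates. One therefore has to blow up the parameter space near $\Sigma\subset\partial B^4$. After this blow-up, the family on the boundary of the (reparametrized) $5$-cell consists of round spheres, possibly of varying radius. It is parametrized by a map from $\partial I^5$ to the space of oriented geodesic spheres and points, roughly $\mathbb{S}^3\times[-\pi,\pi]$ with the ends collapsed. The crucial topological fact is that the degree of the center map $\Sigma\to\mathbb{S}^3$ on the relevant boundary piece is $g$ times a nonzero constant, in the sense that the family restricted to $\Sigma$ wraps via the unit normal. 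In particular it is nonzero exactly when $g\geq1$.

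Third, min--max. Consider the homotopy class $\Pi$ of $5$-parameter families in the flat topology, with no concentration of mass, that agree with this boundary map. We would show the width satisfies $L(\Pi)>4\pi$. The argument is that a family with all areas close to $4\pi$ would, via Almgren's isomorphism and the nonzero degree, contradict the nontriviality of the boundary map in the space of spheres. Almgren--Pitts theory, with the Marques--Neves index bound, then produces an embedded minimal surface $S$ with
\[
4\pi<\mathrm{Area}(S)\leq \sup\mathrm{Area}(\Sigma_{(v,t)})\leq\mathcal{W}(\Sigma)
\]
and index at most $5$. By Urbano's theorem, $S$ is a great sphere or a Clifford torus. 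The strict inequality excludes great spheres (including multiplicity issues, since $2\cdot 4\pi>2\pi^2$ would already suffice). Hence $\mathcal{W}(\Sigma)\geq 2\pi^2$.

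The main obstacle is the second and third steps together: controlling the family near $\partial B^4\cap\Sigma$, and proving $L(\Pi)>4\pi$ by the degree argument. I would first establish that the boundary map is homotopically nontrivial, via the degree of the unit normal map being tied to the genus. Then I would argue by contradiction with a family of areas at most $4\pi+\varepsilon$, using the fact that the space of near-$4\pi$ cycles retracts onto round spheres.
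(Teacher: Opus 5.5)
The paper gives no proof of this statement: it cites Marques--Neves and records only that the areas in their $5$-parameter canonical family are bounded by $\mathcal{W}$. Your outline follows that architecture: Li--Yau reduction to embedded surfaces, the parallel-surface area bound, a blown-up boundary family of oriented spheres, width $>4\pi$, min-max with index at most $5$, then Urbano. Two points are off.

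\textbf{A minor misstatement.} A map from $\Sigma$ to $\mathbb{S}^3$ has no degree. The center map is $Q:\partial I^4\cong\mathbb{S}^3\to\mathbb{S}^3$, defined on the whole boundary of the blown-up $4$-parameter domain. It is essentially $\pm v$ on the part of $\partial B^4$ away from $\Sigma$. On the collar over $\Sigma$ it is given by points $\cos s\,x+\sin s\,N(x)$ of the normal great circles, and that collar is not closed, so it has no degree by itself. That $\deg Q=g$ exactly is a genuine computation you have not supplied.

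\textbf{The real gap: your argument for $L(\Pi)>4\pi$.} The ``fact'' that cycles of mass near $4\pi$ retract onto round spheres is false. Two disjoint round spheres of area $2\pi$ each, or a torus of area $4\pi$, are $\mathbf{F}$-far from every round sphere. So a family with $\sup\mathbf{M}\le 4\pi+\varepsilon$ cannot be pushed onto spheres wholesale. Pulling tight does not suffice either: it only makes near-maximal slices close to stationary varifolds of mass $4\pi$, and these need not be great spheres ($\tfrac12(|S_1|+|S_2|)$ for two distinct great spheres is one).

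The missing idea is to select slices by enclosed volume.
\begin{enumerate}
\item Lift $\Phi(x)=\partial U_x$ continuously (Almgren), with $U_x=0$ on $I^4\times\{0\}$ and $U_x=[\mathbb{S}^3]$ on $I^4\times\{1\}$. Let $\mathcal{R}=\{x:\mathrm{vol}(U_x)=\tfrac12\mathrm{vol}(\mathbb{S}^3)\}$.
\item $\mathcal{R}$ separates top from bottom and meets $\partial I^4\times[0,1]$ exactly in $\partial I^4\times\{\tfrac12\}$. There $\Phi(x,\tfrac12)$ is the great sphere centered at $Q(x)$.
\item By the isoperimetric inequality in $\mathbb{S}^3$, every slice in $\mathcal{R}$ has mass at least $4\pi$, with equality only for great spheres. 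This is also what gives $L(\Pi)\ge 4\pi$.
\item By compactness, a slice in $\mathcal{R}$ of mass at most $4\pi+\delta$ is therefore $\mathbf{F}$-close to a great sphere, oriented as the boundary of the half-ball $U_x$.
\item If $L(\Pi)=4\pi$, take a family in $\Pi$ with $\sup\mathbf{M}\le 4\pi+\delta$. It yields a continuous map from (a cubical approximation of) $\mathcal{R}$ to $\mathbb{S}^3$, the space of oriented great spheres. This map restricts to $Q$ on $\partial I^4\times\{\tfrac12\}$, so $\deg Q=0$, contradicting $\deg Q=g\ge1$.
\end{enumerate}
With this repaired, the rest of your outline goes through.

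\textbf{A remark on the index step.} The general bound ``index $\le$ number of parameters'' that you invoke postdates the Willmore paper. There, index at most $5$ was obtained differently. One applies the construction to a least-area embedded minimal surface of genus $\ge1$. If its index were at least $6$, one perturbs its canonical family along a sixth negative direction to push the width below its area. This is exactly the argument in the proof of Theorem \ref{leastareamb}.
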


Less is known about Willmore minimizers of fixed topological type. By a result of T. Banchoff \cite{Banchoff}, an immersed $\mathbb{RP}^2$ must contain a triple point, which, when combined with work of Li-Yau \cite{LY} forcing the infimal Willmore energy to be at least $12\pi$.  R. Bryant \cite{Bryantconf} and R. Kusner \cite{KusnerRp2} found examples realizing this number and studied their moduli space.  Among $\mathbb{RP}^2$ immersed in $\mathbb{R}^4$, Li-Yau \cite{LY} showed that the stereographic projection of the Veronese embedding with area $6\pi$ is optimal.  See also  \cite{BHM} for results for Klein bottles in $\mathbb{R}^4$.

\subsection{Sketch} Let us sketch some of the main ideas in this paper.  To any non-orientable surface $\Sigma$ with boundary the great circle $C$ and non-zero Euler number, we consider the $D^2$-family $\{\Sigma_{r,\theta}\}_{(r,\theta)\in D^2}$ of conformal dilates of the surface that preserve the boundary $C$ (as a set).  The areas in this ``canonical family" are all controlled from above by the Willmore energy or conformal volume of $\Sigma$.  We then apply a min-max argument to the canonical family to obtain an index (at most) $2$ minimal surface with less area than that of $\Sigma$.  Similar to the proof of the Willmore Conjecture, it is essential that the width not be equal to $2\pi$, i.e. that the min-max process does not merely produce a hemisphere of area $2\pi$ and thus only gives a trivial lower bound. The key point is that as $r\to 1^{-}$, the surfaces $\Sigma_{r,\theta}$ in the canonical family converge as varifolds to a hemisphere with boundary $C$ determined by the co-normal direction of $\Sigma$ at the point on $C$ at angle $\theta$ (see Figure \ref{picture}). Thus the boundary of the canonical family ``detects" the Euler number (in a similar way that the boundary of the Marques-Neves $5$-parameter family ``detects" the genus of the original surface). If it is non-zero, then if the width were $2\pi$ the $D^2$-canonical family could be homotoped to be near the space of hemsipheres bounded by $C$.  The space of such hemispheres is homeomorphic to $S^1$. This would allow us to build an extension to the disk $D^2$ of a map of non-zero degree from its boundary $\partial D^2$ to $S^1$, which is impossible.  We note that our canonical family differs from than that of Marques-Neves in the closed setting since there is not an extra parameter beyond the conformal ones. This completes the sketch in the case when the Euler number is non-zero.

  \begin{figure}\label{picture}
	\centering
	\resizebox{4in}{!}{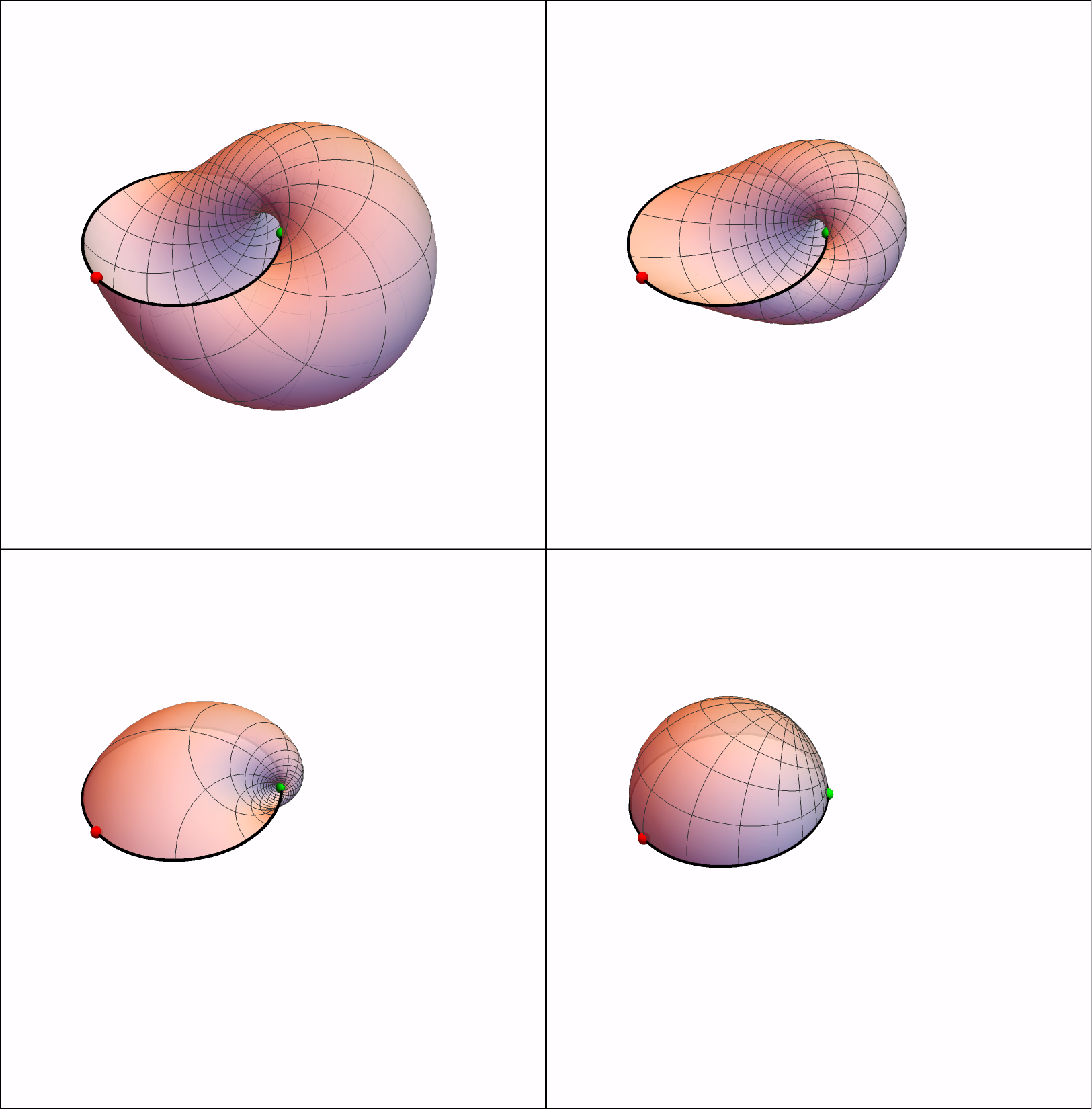}
	\caption{Stereographic projection of the conformal family of the Lawson band $\overline{\tau}_{1,2}$ pushing from $\mathbf{e}_1$ to $-\mathbf{e}_1$.  In particular, four stages of the surface $(\overline{\tau}_{1,2})_{r,0}$ as $r$ increases from $0$ (on upper LHS) to $1$ (on bottom RHS), where it coincides with a hemisphere bounded by $C$.}
	\label{ConfFamily}
\end{figure}

If instead the Euler number of the M\"obius band is zero (such as Boy's surface with a small disk removed) and the band has low conformal volume, we show that we can cap it off to obtain an immersed $\mathbb{RP}^2$ in $\mathbb{R}^3$ without adding additional triple points.  A result of T. Banchoff \cite{Banchoff} implies the band itself must then have had a triple point, implying the conformal volume of the band was in fact too large to begin with.

In a forthcoming artice \cite{BKMinmax}, we develop the min-max theory for M\"obius bands.  We use the min-max theory first developed by Colding-Minicozzi \cite{CM1}, who obtained minimal $2$-spheres from non-trivial sweepouts of homotopy three-spheres. In particular, we use its  extension to a min-max process for tori by X. Zhou \cite{Zhoutorus} (where one allows the conformal structure to vary) and to the context of maps with boundaries constrained to a submanifold by L. Lin, A. Sun and X. Zhou \cite{LSZ}.   In adapting their work, we observe some novel phenomena related to the non-orientability, including an analysis of $\mathbb{RP}^2$ bubbles and a degree formula mod 2 for boundary parameterizations of limits. 

The organization of this paper is as follows. In Section \ref{general} we give basic observations on low-area minimal surfaces with boundary $C$.  In Section \ref{conformalfamily} we introduce the conformal family and deduce its non-triviality when the Euler number is non-zero.  In Section \ref{sectionminmax} we discuss the min-max theorem we will need and its consequences.  In Section \ref{areaboundssection} we prove Theorem \ref{mainintro}.  In Section \ref{applicationsection} we prove various results about conformal volume and Willmore energy. Section \ref{appendix} introduces a mollification procedure.  In Section \ref{problems} we collect open problems and directions for research related to constructions in this paper. 
\\
\newline
\emph{Acknowledgements}:  J.B was partially supported by the NSF grant DMS-2203132.  D.K. was partially supported by NSF grant DMS-2405114.

\subsection{Notation}
Throughout this paper, $\mathbb{S}^3$ is endowed with its standard round metric induced from $\mathbb{R}^4$.  Denote by $G_2(\mathbb{S}^3)$ the Grassmannian bundle of unoriented $2$-planes in $\mathbb{S}^3$.  A $2$-varifold is a Radon measure on $G_2(\mathbb{S}^3)$.  We denote by $\mathcal{V}_2(\mathbb{S}^3)$ the space of $2$-varifolds and for $V\in\mathcal{V}_2(\mathbb{S}^3)$ we denote by $||V||(\mathbb{S}^3)$ its total mass.  
If $V,W\in\mathcal{V}_2(\mathbb{S}^3)$, then following Pitts (\cite{Pi}, pg. 66) define the $\mathbf{F}$-metric by:
\begin{equation}\label{defoff}
\mathbf F(V,W)
=
\sup
\left\{
|V(f)-W(f)|:
f\in C^1(G_n(\mathbb{S}^3)),
\ |f|\le 1,\ 
\operatorname{Lip}(f)\le 1
\right\}.
\end{equation}
The $\mathbf{F}$-metric induces the weak topology on $\mathcal{V}_2(\mathbb{S}^3)\cap\{ V\;|\; ||V||(\mathbb{S}^3)\leq a\} $ for each $a>0$.
For a surface $\Sigma$ immersed in $\mathbb{S}^3$, we denote by $|\Sigma|\in\mathcal{V}_2(\mathbb{S}^4)$ the corresponding $2$-varifold.

\section{Surfaces bounded by a great circle}\label{general}
We may describe the unit sphere $\mathbb{S}^3\subset\mathbb{R}^4=\mathbb{C}^2$ by writing
\begin{equation}
\mathbb{S}^3=\{(z,w)\in\mathbb{C}^2\;|\; |z|^2+|w|^2=1\}.
\end{equation}
Let us denote the great circle
\begin{equation}
C=\{(z,w)\in\mathbb{S}^3\;|\; w=0\}.
\end{equation}
In this section, we record some properties of surfaces spanning $C$.  

There is an $\mathbb{S}^1$-family of hemispheres $\{H_\phi\}_{\phi\in [0,2\pi]}$ in $\mathbb{S}^3$ with boundary $C$ given by
\begin{equation}
H_\phi =
\{ (z,w) \in S^3 :
\operatorname{Im}(e^{-i\phi} w)=0,\;
\operatorname{Re}(e^{-i\phi} w)\ge 0 \},
\qquad
0 \le \phi \le 2\pi .
\end{equation}
Let us denote the collection of such hemispheres:
\begin{equation}\label{identify}
\mathcal{H} =\{H_\theta\;|\; \theta\in [0,2\pi]\}.
\end{equation}
There holds
\begin{equation}
\mbox{Area}(|H_\phi|)=2\pi\mbox{ for each } \phi\in\mathbb{S}^1.
\end{equation}

\subsection{Spanning surfaces and area bounds}
Since we will be working with objects less regular than smoothly embedded submanifolds, the following definition is useful:
\begin{defn}
Let $\Sigma$ be a compact surface with one boundary component that is non-empty and oriented.  If $u:\Sigma\to\mathbb{S}^3$ is a $W^{1,2}\cap C^0$ map where $u(\partial\Sigma)\subset C$ and $u|_{\partial\Sigma}:\partial\Sigma\to C$ has degree $1$, then $u(\Sigma)$ \emph{spans} $C$ (or for ease of notation, $u$ spans $C$).
\end{defn}
Here the degree is the topological degree of the continuous map $u|_{\partial \Sigma}$ and we have fixed an orientation on $C$.  We note that if $u|_{\partial \Sigma}$ is injective, then, up to reversing orientation, $u$ spans.

We first observe the following area bound for spanning surfaces:

\begin{lemma}\label{AreaLowBnd}
Let $\Sigma$ be a compact surface with one boundary component.  If $u:\Sigma\to \mathbb{S}^3$ is a $W^{1,2}\cap C^0$ map so that $u$ spans $C$, then, $\mathrm{Area}(u)$, the (parameterized) area of $u$, i.e., $\mathrm{Area}(|u(\Sigma)|)$, the mass of the asociated varifold as in \eqref{VarifoldToMap}, is at least $2\pi$. 
\end{lemma}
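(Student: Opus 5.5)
We prove the bound by a calibration argument: we find a closed $2$-form whose comass is at most $1$ and whose integral over $C$-spanning surfaces is forced by the boundary degree. Write $\mathbb{S}^3\subset\mathbb{C}^2$ with coordinates $z=x_1+ix_2$, $w=x_3+ix_4$. Consider the $1$-form $\alpha=x_1\,dx_2-x_2\,dx_1$ restricted to $\mathbb{S}^3$, and the $2$-form
\begin{equation*}
\omega=d\alpha=2\,dx_1\wedge dx_2 .
\end{equation*}
Rather than use $\omega$ directly, we use the standard fact that $dx_1\wedge dx_2$ has comass $1$ on $\mathbb{R}^4$. Its restriction to tangent $2$-planes of $\mathbb{S}^3$ therefore has comass at most $1$. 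Pointwise, for any map $u$,
\begin{equation*}
|u^*(dx_1\wedge dx_2)|\le J u ,
\end{equation*}
where $Ju$ is the area Jacobian.

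Next we compute the boundary term. Along $C$ we have $|z|=1$, and $\alpha|_C=d\arg z$, so $\int_C \alpha=2\pi$ for the fixed orientation of $C$. Since $u|_{\partial\Sigma}$ has degree $1$ onto $C$, we get $\int_{\partial \Sigma}u^*\alpha = 2\pi$. If $\Sigma$ were orientable, Stokes would give
\begin{equation*}
2\pi=\int_\Sigma u^*(2\,dx_1\wedge dx_2)\le 2\,\mathrm{Area}(u),
\end{equation*}
which yields only $\pi$. That is too weak, so we must use a better calibration.

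The correct choice is the $2$-form $\beta=\tfrac12 d\big((1-|w|^2)\,d\arg z\big)$ on $\mathbb{S}^3\setminus\{z=0\}$, or equivalently the closed form obtained by retracting onto hemispheres. We apply it to the projection $P:\mathbb{S}^3\setminus\{z=0\}\to \mathbb{S}^2$ given by $(z,w)\mapsto (w, |z|)$ up to rotation by $\arg z$. In practice the cleanest route is the following:
\begin{enumerate}[(i)]
\item We show that the form $\eta=|z|^2\,d\arg z$ satisfies $|d\eta|\le 2$ on $2$-planes in $\mathbb{S}^3$, with $d\eta=2\,dx_1\wedge dx_2$. Then $\int_{\partial\Sigma}u^*\eta=2\pi$, because $|z|=1$ on $C$. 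This again only gives $\pi$.
\item Since a factor of $2$ is lost, we instead argue by cone comparison and monotonicity. Let $p$ be a point at distance $\pi/2$ from all of $C$, i.e. on the polar circle $\{z=0\}$.
\end{enumerate}

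The degree condition is where non-orientability must be handled. Because $\Sigma$ may be non-orientable, we cannot use Stokes on $\Sigma$. Instead we pass to the orientation double cover $\tilde\Sigma$, whose boundary consists of two circles. Each lift maps to $C$, and the two lifts carry opposite induced orientations, so the total degree is $1-1=0$. A direct calibration on the double cover therefore fails as well. This is why we switch to a measure-theoretic argument that does not see orientation.

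The monotonicity route goes as follows. Consider the varifold $V=|u(\Sigma)|$, which has $\partial$-support in $C$. Reflect it through the totally geodesic sphere containing $C$, or more simply compare with the geodesic cone: the cone from a polar point over $C$ is a hemisphere of area $2\pi$. We then use the coarea formula with the distance to the polar circle, $d=\arccos|z|$. On $\Sigma$ the function $f=|z|\circ u$ equals $1$ on $\partial\Sigma$. For a.e. $t\in(0,1)$ the level set $\{f=t\}$ separates $\partial\Sigma$ from $\{f<t\}$. Moreover, $\{f\ge t\}$ maps onto $\{|z|\ge t\}$ with a degree-$1$ boundary condition, so the projection $\arg z\circ u$ restricted to $\{f=t\}$ has mod-$2$ degree $1$, and degree $1$ in $\mathbb{Z}$ after choosing a component. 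Hence $\mathrm{length}(u(\{f=t\}))\ge 2\pi t$. The coarea inequality gives
\begin{equation*}
\mathrm{Area}(u)\ge\int \mathrm{length}(\{f=t\})\,\frac{dt}{|\nabla^{\Sigma} f|}\cdots .
\end{equation*}
With $|\nabla f|\le\sqrt{1-t^2}$, we obtain $\mathrm{Area}\ge\int_0^1 2\pi t/\sqrt{1-t^2}\,dt=2\pi$. This matches the hemisphere exactly, which supports the approach.

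The main obstacle is justifying that a.e. level curve $\{f=t\}$ has image of length at least $2\pi t$. This must be done for a $W^{1,2}\cap C^0$ map on a possibly non-orientable $\Sigma$, and it requires using that $f$ may fail to reach $0$; in that case the level sets for small $t$ are empty. That case is the issue to resolve. We plan to handle it by showing that the map $\{|z|\ge t\}\ni u\mapsto z/|z|\in S^1$, restricted to $\{f=t\}$, is homologous in $\Sigma$ to $\partial\Sigma$ with degree $1$ in $H_1(S^1;\mathbb{Z})$. If $f>t$ everywhere, then $z/|z|\circ u$ extends over $\Sigma$. This is consistent with non-orientability only if the degree on $\partial\Sigma$ is even, because $\partial\Sigma$ is twice a class in $H_1$ modulo boundaries. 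That contradicts degree $1$, so $f$ attains every $t\in(0,1)$, and each level curve carries odd degree, hence length at least $2\pi t$. In the orientable case, $\partial\Sigma$ is null-homologous, which gives the same contradiction directly.
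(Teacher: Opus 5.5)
Your final argument, the coarea slicing by the tori $\{|z|=t\}$, is correct once you strip away the abandoned calibration attempts and the double-cover digression. It takes a genuinely different route from the paper.

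\textbf{Comparison with the paper.} The paper first mollifies $u$ (Lemma \ref{varcont} and Lemma \ref{mollprop}) to a smooth $u_\epsilon$ that still spans $C$ and has nearly the same area. It then slices by great circles. A great circle $c$ disjoint from $C$ and linked with it is linked once with $u_\epsilon(\partial\Sigma)$, so it must meet $u_\epsilon(\Sigma)$. Such a circle meets a hemisphere exactly once, and unlinked circles miss the hemisphere. The Cauchy--Crofton formula then gives $\mathcal{H}^2(u_\epsilon(\Sigma))\ge 2\pi$, and the area formula passes this to the parameterized area.

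You instead slice $\Sigma$ by level sets of $f=|z|\circ u$. You use $|\nabla^{\mathbb{S}^3}|z||=\sqrt{1-|z|^2}$, the coarea formula, and a winding bound $\mathrm{Length}\ge 2\pi t$ on level curves, which yields $\int_0^1 2\pi t(1-t^2)^{-1/2}\,dt=2\pi$.

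Both arguments rest on the same topological fact: $\partial\Sigma$ bounds the mod-$2$ fundamental class of $\Sigma$, and this is how non-orientability is absorbed. The paper's version bounds the multiplicity-free quantity $\mathcal{H}^2(u(\Sigma))$ and needs no auxiliary function on $\Sigma$. Yours avoids integral geometry and rectifiability, and makes sharpness visible, since every inequality in your chain is an equality on each $H_\phi$.

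\textbf{What needs tightening.}
\begin{enumerate}
\item You must reduce to smooth maps exactly as the paper does. Mollification preserves the spanning condition and areas converge. Sard's theorem and your level-curve length bounds are not directly available for a $W^{1,2}\cap C^0$ map, and your proposal flags this obstacle without resolving it.
\item A smooth $u$ need not be an immersion, so $\nabla^\Sigma$ in the induced metric is unavailable. Replace your garbled coarea display with the following pointwise inequality, taken with respect to a fixed background metric, on $\{\nabla f\neq 0\}$, with $\tau$ the unit tangent to the level set and $\nu=\nabla f/|\nabla f|$:
\begin{itemize}
\item $J_u\ge |du(\tau)|\,|\nabla f|/\sqrt{1-f^2}$;
\item this holds because $du(\tau)\perp\nabla^{\mathbb{S}^3}|z|$ and $d|z|(du(\nu))=|\nabla f|$;
\item coarea then gives $\mathrm{Area}(u)\ge\int_0^1(1-t^2)^{-1/2}\,\mathrm{Length}(u|_{f^{-1}(t)})\,dt$.
\end{itemize}
\item Keep your observation that $(z/|z|)\circ u$ has mod-$2$ degree $1$ on $\{f=t\}$, but justify it properly. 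For a regular value $t\in(0,1)$:
\begin{itemize}
\item $\Sigma_t=\{f\ge t\}$ is a compact surface with boundary $\partial\Sigma\sqcup f^{-1}(t)$;
\item hence $(z/|z|)\circ u:\Sigma_t\to S^1$ has total mod-$2$ degree on $f^{-1}(t)$ equal to its degree on $\partial\Sigma$, namely $1$;
\item so \emph{some} component of $f^{-1}(t)$ has odd, hence nonzero, winding, and its $z$-projection alone has length at least $2\pi t$.
\end{itemize}
Your later claims that \emph{each} level curve has odd degree, or degree $1$ after choosing a component, are not correct as stated. The ``$f>t$ everywhere'' argument by itself only shows that the level sets are nonempty.
\end{enumerate}
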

\begin{proof}
By Lemma \ref{varcont} and Proposition \ref{mollprop}, for any $\epsilon>0$, one can find a $u_\epsilon\in C^\infty(\Sigma)$ so that $u_\epsilon(\Sigma)$ spans $C$ and the parameterized area of $u_\epsilon$ and $u$ differ by less than $\epsilon$.  Note that $u_\epsilon(\Sigma)$ is a rectifiable set.

Let $\mathcal{C}$ denote the space of (oriented) great circles in $\mathbb{S}^3$ -- i.e., the space of oriented closed geodesics.  If $H$ is a hemisphere bounded by $C$, and $c\in \mathcal{C}$ is disjoint from $C$, then $c$ either $c$ is linked with $C$ and $c$ meets $H$ once or $c$ is not linked with $C$ and is disjoint from $H$.  That is for $c$ disjoint from $C$ the intersection number between $c$ and $H$ satisfies $n(c,H)=0$ when $c$ is unlinked with $C$ and $1$ when $c$ is linked. It follows from the Cauchy-Crofton formula in the sphere \cite{santaloIntegralGeometryGeometric2004} that
$$
2\pi=\mathcal{H}^2(H)=\int_{\mathcal{C}} n(c,H) d\mu_{\mathcal{C}}
$$
where here $\mu_{\mathcal{C}}$ is the appropriate measure on $\mathcal{C}$ and we use the set of elements of $\mathcal{C}$ intersecting $C$ has $\mu_C$ measure zero.
By the spanning condition,  $c\in \mathcal{C}$ is disjoint from $u(\partial \Sigma)$ if and only if it is disjoint from $C$ and is linked with $u|_{\partial \Sigma}$ if and only if $c$ is linked with $C$.  It follows that for $c$ disjoint from $C$,  $n(c,u_\epsilon(\Sigma))\geq n(c,H)$ and hence, as $u_\epsilon(\Sigma)$ is rectifiable the area of this set satisfies
$$
\mathcal{H}^2(u_\epsilon(\Sigma))=\int_{\mathcal{C}} n(c,u_\epsilon) d\mu_{\mathcal{C}} \geq  2\pi.
$$
By the area formula \cite[pg. 46]{SimomBook}, it follows that the parameterized area of $u_\epsilon$, , i.e., $\mathrm{Area}(|u_\epsilon(\Sigma)|)$ is at least $2\pi$ and so $\mathrm{Area}(|u(\Sigma)|)\geq 2\pi-\epsilon$.  Since $\epsilon>0$ is arbitrary the claim follows.
\end{proof}

We have the following:
\begin{lemma}\label{orientable}
Let $\Sigma$ be an orientable surface with one boundary component.  We have the following:
\begin{enumerate}
\item A branched \emph{minimal} immersion $u:\Sigma\to \mathbb{S}^3$ with $u({\partial \Sigma})\subset C$ has area at least $2\pi$ with equality if and only if $\Sigma$ is a disk and $u$ is the embedding of a hemisphere with boundary $C$.
\item  A branched \emph{minimal} immersion $u:\Sigma\to \mathbb{S}^3$ with $u(\partial \Sigma)\subset C$ that is not a hemisphere has area at least $4\pi$.
\item A branched minimal immersion of a disk spanning $C$ is a hemisphere with area $2\pi$.
\item If $u:\Sigma\to \mathbb{S}^3$ is a $W^{1,2}\cap C^0$ map so that $u$ spans $C$, then $\mathrm{Area}(u)$, the (parameterized) area of $u$ is at least $2\pi$. 
\end{enumerate}
\end{lemma}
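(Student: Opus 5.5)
Item (4) is Lemma \ref{AreaLowBnd} with no orientability assumption, so it needs no new argument. For (1)--(3), the plan is to use the orientability of $\Sigma$ to double $u$ across $C$ by Schwarz reflection. Then I apply the monotonicity formula, or equivalently the sharp lower bound $4\pi$ for closed minimal surfaces in $\mathbb{S}^3$, to the doubled surface.

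\textbf{Step 1 (reflection).} Let $R$ be the isometry of $\mathbb{S}^3$ fixing $C$ pointwise, $R(z,w)=(z,-w)$, i.e. the rotation by $\pi$ about $C$. A branched minimal immersion with boundary on a geodesic extends by Schwarz reflection. Let $\hat\Sigma=\Sigma\cup_{\partial\Sigma}\Sigma$ be the double, with $\hat u=u$ on the first copy and $\hat u=R\circ u$ on the second. Then $\hat u$ is a closed branched minimal immersion with $\mathrm{Area}(\hat u)=2\,\mathrm{Area}(u)$, and $\hat\Sigma$ is orientable because $\Sigma$ is. Every closed branched minimal surface in $\mathbb{S}^3$ has area at least $4\pi$: it is not contained in a hemisphere, so by monotonicity in the cone over $\mathbb{S}^3$ its density at the origin is at least $1$. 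Hence $\mathrm{Area}(u)\ge 2\pi$.

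\textbf{Step 2 (equality and the $4\pi$ gap).} Equality $\mathrm{Area}(\hat u)=4\pi$ forces the cone over $\hat u(\hat\Sigma)$ to be a plane through the origin. So $\hat u$ is a multiplicity-one covering of a great sphere, i.e. an embedding of $S^2$ onto a great sphere. The great sphere contains $C$, so $u$ is a disk mapping onto one of the hemispheres $H_\phi$; this gives (1). For (2), if $u$ is not a hemisphere, then $\hat u$ is not a multiplicity-one great sphere. I would split into two cases. If the image of $\hat u$ lies in a great sphere, then $\hat u$ covers it with degree $\ge 2$, so its area is $\ge 8\pi$. Otherwise $\hat u$ is a closed minimal surface that is not totally geodesic. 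Such a surface has area $>4\pi$, and the monotonicity argument gives more: a non-totally-geodesic minimal surface through the point $p$ of maximal density, or the fact that $\hat u(\hat\Sigma)$ contains $C$ and is $R$-invariant, yields density $\ge 2$ along $C$ for the cone. Indeed, each point of $C$ lies on both the sheet $u$ and its reflection $R\circ u$, and these coincide as sheets only in the totally geodesic case. Concretely, the link of the cone at the origin is $\hat u$, and the limiting density at a point of $C$ on the cone is at least the number of distinct sheets. This gives $\mathrm{Area}(\hat u)\ge 8\pi$ and hence $\mathrm{Area}(u)\ge 4\pi$. Making this density count precise is the main obstacle. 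What I would try first is the Li--Yau inequality: the doubled surface has a point of multiplicity $\ge 2$, so its Willmore energy, which equals its area, is $\ge 8\pi$.

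\textbf{Step 3 (the disk).} If $\Sigma$ is a disk and $u$ spans $C$, then $\hat u$ is a branched minimal sphere in $\mathbb{S}^3$. By Almgren's theorem, branched minimal spheres in $\mathbb{S}^3$ are coverings of great spheres. The spanning condition makes $u|_{\partial\Sigma}$ degree $1$, so the covering has degree $1$ and $u$ is a hemisphere of area $2\pi$; this gives (3).
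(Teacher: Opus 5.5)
\textbf{Where the proposal stands.} Your treatment of (4), and of (1) including its equality case, is sound. For the equality case you use rigidity in monotonicity: density $1$ at the vertex of the cone over $\hat u$ forces a plane. That is a legitimate alternative to the paper, which gets (1) via Hardt--Simon.

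\textbf{The gap is in (2).} You claim the cone over $\hat u$ has density $\geq 2$ along $C$ because each point of $C$ lies on both the sheet $u$ and the sheet $R\circ u$. This is false. The reflection principle says precisely that $u$ and $R\circ u$ are the two halves of a single smooth sheet through $C$. Near a boundary point where $u$ is an embedding, the double therefore has density exactly $1$ on $C$. This is the situation for the double of any embedded surface spanning $C$, e.g.\ the Lawson Klein bottle $\tau_{1,2}$ along $C$.

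Nor does ``not totally geodesic'' upgrade ``area $>4\pi$'' to ``area $\geq 8\pi$'': the Clifford torus has area $2\pi^2<8\pi$. Your Li--Yau fallback presupposes a point of multiplicity $\geq 2$, which is exactly what has to be produced. Step 2 never uses orientability or the possible embeddedness of $u$. Yet the embedded orientable case is exactly where no higher-density point exists.

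The missing ingredient is the Hardt--Simon theorem: an orientable \emph{embedded} minimal surface spanning $C$ is a hemisphere. The paper argues by dichotomy.
\begin{itemize}
\item If $u$ is an embedding, it is a hemisphere by Hardt--Simon.
\item Otherwise a self-intersection, a branch point, or a non-injective boundary map gives a point $p$ of density $\geq 2$ on the double $\Sigma'$. Since $\Theta(C(\Sigma'),tp)=\Theta(C(\Sigma'),p)$ for all $t>0$, upper semicontinuity gives $\mathrm{Area}(\Sigma)/2\pi=\Theta(C(\Sigma'),0)\geq 2$.
\end{itemize}

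\textbf{Step 3 has a false inference.} For a \emph{branched} covering of a great sphere, $\deg(u|_{\partial\Sigma})=1$ only gives $|a-b|=1$, where $a,b$ are the numbers of sheets over the two hemispheres bounded by $C$. It does not give $a+b=1$.

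For example, take $f(z)=z^2(4-z)/(4z-1)$.
\begin{itemize}
\item $|f|=1$ on $|z|=1$, with winding number $2-1=1$ there.
\item Its only critical points in the closed unit disk are $0$ and $(19-\sqrt{105})/16$; the pole at $1/4$ is simple. Hence $f|_{\partial D}$ is a diffeomorphism onto the unit circle.
\item By the argument principle, $f$ covers the unit disk twice and its exterior once.
\end{itemize}
Compose with inverse stereographic projection onto a great sphere containing $C$, sending the unit circle to $C$. The result is a branched minimal disk spanning $C$, of area $6\pi$, which is not a hemisphere.

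So (3) needs an extra hypothesis. Either $u$ is unbranched, so the double is a covering $S^2\to S^2$ and hence a diffeomorphism. Or $\mathrm{Area}(u)<6\pi$, so that $a+b$ odd and $2\pi(a+b)<6\pi$ force $a+b=1$. The paper's one-line appeal to the classification of minimal spheres passes over the same point. This is consistent with the sharp $6\pi$ threshold in Proposition \ref{NoBPProp}.
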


\begin{proof}
For (1) and (2) observe by Proposition 3.1 in \cite{Lawson}, we can Schwarz reflect $\Sigma$ about $C$ to obtain a closed minimal surface $\Sigma'$ with $\mbox{Area}(\Sigma')=2\mbox{Area}(\Sigma)$.   By a result of Hardt-Simon \cite{HS}, if $\Sigma$ is embedded it is a hemisphere.  Thus we can assume without loss of generality that $\Sigma$ is immersed as is $\Sigma'$.  It follows that $\Sigma'$ has a point of density at least two, implying its area is at least $8\pi$.  To see this,  let $C(\Sigma')$ denote the cone over $\Sigma'$ in $\mathbb{R}^4$.  Then there exists a sequence of points $p_i\in\mathbb{R}^4$ with $p_i\to 0$ so that 
\begin{equation}
\Theta(C(\Sigma'), p_i)\geq 2.
\end{equation}
By the upper semicontinuity of density, this implies that
\begin{equation}
\Theta(C(\Sigma'), 0)\geq \Theta(C(\Sigma'), p_i)\geq 2. 
\end{equation}
Since 
\begin{equation}
\frac{\mathrm{Area}(\Sigma')}{4\pi}=\frac{\mathrm{Area}(\Sigma)}{2\pi}= \Theta(C(\Sigma'), 0)\geq 2, 
\end{equation}
we conclude
\begin{equation}
\mbox{Area}(\Sigma)\geq 4\pi.
\end{equation}  This completes the proof of (1) and (2). Item (3) follows from the classification of minimal two-spheres in $\mathbb{S}^3$. 
 Item (4) is an immediate consequence of \ref{AreaLowBnd}.  
\end{proof}
We also have for non-orientable surfaces: 

\begin{lemma} \label{nonorientable} Let $\Sigma$ be a non-orientable surface with one boundary component. We have the following:
\begin{enumerate}
\item A branched \emph{minimal} immersion $u:\Sigma\to \mathbb{S}^3$ with $u({\partial \Sigma})\subset C$ has area at least $4\pi$.
\item If $u:\Sigma\to\mathbb{S}^3$ is a  $W^{1,2}\cap C^0$ map  spanning $C$ then 
the parameterized area of $u$, is at least $2\pi$.
\item For all $\varepsilon>0$ there exists $\delta>0$ so that if $u:\Sigma\to\mathbb{S}^3$ 
is a $W^{1,2}\cap C^0$ map where $u$ spans $C$ and $\mbox{Area}(u)\leq 2\pi+\delta$, then 
\begin{equation}
\mathbf{F}(|u(M)|, \mathcal{H})<\varepsilon.
\end{equation}
\end{enumerate}
\end{lemma}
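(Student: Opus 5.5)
The plan is to handle the three items in order, each time reducing to arguments already used in Lemmas \ref{AreaLowBnd} and \ref{orientable}.

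For (1), I would follow the proof of Lemma \ref{orientable}(1)--(2). By Proposition 3.1 of \cite{Lawson}, Schwarz reflection of $u$ across $C$ produces a closed branched minimal surface $\Sigma'$ with $\mathrm{Area}(\Sigma')=2\,\mathrm{Area}(u)$. A non-orientable surface with one boundary circle is not a disk, so $u$ cannot be a hemisphere. The next step is to show that $\Sigma'$ has a point of density at least $2$; I see two ways to get this.
\begin{itemize}
\item If $\Sigma'$ is branched or non-embedded, the density point is immediate.
\item If $\Sigma'$ is embedded, it is an embedded closed surface in $\mathbb{S}^3$ and hence orientable. Then $u$ is an embedded minimal surface spanning $C$. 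Its reflection is orientable, so its normal flips under reflection across $C$, which forces $\Sigma$ itself to be orientable. This is the main subtle point; I would check it by noting that an embedded surface spanning $C$ separates locally near $C$ compatibly with $H$.
\end{itemize}
The route I prefer for the embedded case is an alternative: by Hardt--Simon \cite{HS}, an embedded minimal surface spanning $C$ is a hemisphere, which contradicts non-orientability. Either way $\Sigma'$ has a point of density $\geq 2$. The cone monotonicity argument of Lemma \ref{orientable} then gives $\mathrm{Area}(\Sigma')\geq 8\pi$, so $\mathrm{Area}(u)\geq 4\pi$.

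Item (2) is exactly Lemma \ref{AreaLowBnd}. The Crofton argument there never used orientability.

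For (3), I would argue by contradiction and compactness. Suppose there are maps $u_i$ spanning $C$ with $\mathrm{Area}(u_i)\leq 2\pi+1/i$ and $\mathbf{F}(|u_i(\Sigma)|,\mathcal{H})\geq\varepsilon$. Masses are bounded, so after passing to a subsequence $|u_i(\Sigma)|\to V$ as varifolds, with $\|V\|(\mathbb{S}^3)\leq 2\pi$. I then need to identify $V$ as some $|H_\phi|$. Here I would refine the Crofton argument, after mollifying via Proposition \ref{mollprop}. For a.e.\ great circle $c$ linked with $C$ we have $n(c,u_i)\geq 1$, and the total integral of $n(c,u_i)$ is at most $2\pi+1/i$. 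Hence the measure of the set of linked circles meeting $u_i$ twice or more, plus the unlinked circles meeting $u_i$ at all, tends to $0$.

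Passing to the limit requires a Crofton-type formula for $\|V\|$. By lower semicontinuity I expect $\mathrm{spt}\|V\|$ to meet almost every linked circle and almost no unlinked circle, with multiplicity one. The key tool for this is that the Crofton measure $c\mapsto \|V\|$-integral is continuous under varifold convergence, since it is obtained by integrating a continuous kernel on $G_2(\mathbb{S}^3)$. From this I would deduce that $V$ is stationary-free of excess: every great circle linked with $C$ meets $\mathrm{spt}\|V\|$ essentially once, and $V$ has mass exactly $2\pi$. Then I would show that $V$ attains equality in the Crofton bound for spanning sets, and that such a rectifiable set with boundary $C$ is a hemisphere. The rigidity step I would try is this: the varifold $V$ minimizes mass among spanning competitors, since any $W$ obtained as a limit has mass at least $2\pi$ by (2). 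It is therefore a stationary varifold with boundary $C$, and the monotonicity and density argument of Lemma \ref{orientable} forces it to be a single multiplicity-one hemisphere $H_\phi$. That contradicts the assumed distance $\varepsilon$.

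The main obstacle is this last rigidity. One must justify that $V$ is stationary relative to $C$ with $C$ as its boundary, so that mass does not escape into $C$ or become lost in the limit. One must also show that equality in the Crofton inequality pins $V$ down to a hemisphere. I would try first to prove stationarity by showing that any deformation fixing $C$ lowers the mass below $2\pi$, which would contradict (2) applied to deformed $u_i$.
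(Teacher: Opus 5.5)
\textbf{Items (1) and (2).} Your item (2) is the paper's argument. The first bullet of your item (1) is also the paper's argument: the doubled surface contains $\Sigma$, so it is a closed non-orientable surface and cannot embed in $\mathbb{S}^3$. The cone monotonicity of Lemma \ref{orientable} then gives $\mathrm{Area}(u)\geq 4\pi$.

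Discard your ``preferred'' Hardt--Simon route, though. Hardt--Simon classifies only \emph{orientable} embedded minimal surfaces spanning $C$. The embedded Lawson band $\bar{\tau}_{1,2}$ is an embedded minimal surface spanning $C$ that is not a hemisphere; only its double $\tau_{1,2}$ fails to be embedded.

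\textbf{The gap is in (3).} The varifolds $|u_i(\Sigma)|$ have no first-variation control, so Allard compactness is unavailable. Their limit $V$ need not be rectifiable, let alone integral. Every property you extract from $V$ is linear or convex in $V$:
\begin{enumerate}
\item its mass is $2\pi$;
\item its Crofton measure equals that of a single hemisphere (your kernel formula is linear in $V$);
\item it is stationary for variations tangent to $C$ along $C$ (your argument via $\|(\phi_t)_\#V\|\geq 2\pi$ does prove this);
\item its mass is minimal among $C$-preserving deformations.
\end{enumerate}
All four properties also hold for $\bar V=\frac{1}{2\pi}\int_0^{2\pi}|H_\phi|\,d\phi$. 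Its weight is absolutely continuous with respect to volume, since the $H_\phi$ foliate $\mathbb{S}^3\setminus C$, and it lies at positive $\mathbf{F}$-distance from $\mathcal{H}$. So ``equality in Crofton'' plus stationarity cannot pin $V$ down to a hemisphere. The density/monotonicity argument of Lemma \ref{orientable} needs an integral (branched immersed) object and says nothing about $\bar V$.

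\textbf{What is missing} is a compactness step that preserves integrality. The paper does this as follows.
\begin{enumerate}
\item It passes to the mod $2$ flat chains $\Sigma_i=(u_i)_\#\Sigma$, which satisfy $\partial\Sigma_i=C$ and $\mathbb{M}(\Sigma_i)\leq\mathrm{Area}(u_i)$.
\item A flat limit $T$ then has $\partial T=C$ and $\mathbb{M}(T)\leq 2\pi$.
\item By Hardt's regularity of mod $2$ minimizers, combined with item (1) and Lemma \ref{orientable}, the least mass with boundary $C$ is $2\pi$ and is attained only by hemispheres. Hence $T$ is a hemisphere $H$, and the masses converge.
\item Liu's theorem (flat convergence plus mass convergence implies varifold convergence) gives $V_i\to|H|$ for the associated chain varifolds.
\item Since $|u_i(\Sigma)|\geq V_i$ and the limiting masses agree, the varifold limit is $|H|$, contradicting $\mathbf{F}(|u_i(\Sigma)|,\mathcal{H})\geq\varepsilon$.
\end{enumerate}
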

\begin{proof}
To see (1), observe that we can Schwarz reflect $\Sigma=u(\Sigma)$ about the geodesic $C$ to obtain a closed minimal non-orientable surface $\tilde{\Sigma}$.  Since closed non-orientable surfaces do not embed in $\mathbb{S}^3$, it follows that $\tilde{\Sigma}$ has a point of density at least $2$ from which the conclusion follows as in the previous lemma.  
Item (2) immediately follows from Lemma \ref{AreaLowBnd}.
To see (3), we first note that by Lemma \ref{varcont} and Proposition \ref{mollprop} it suffices to prove the result assuming $u\in C^\infty(\Sigma)$.  Now, suppose, for some $\varepsilon>0$, there exists no such $\delta$.  Then we obtain a sequence $u_i\in C^\infty(\Sigma)$ with $u_i$ spanning $C$ and so $\mbox{Area}(u_i)\to 2\pi$ but 
\begin{equation}\label{contra}
\mathbf{F}(|u_i(\Sigma)|,\mathcal{H})\geq \varepsilon.
\end{equation}
Let $\Sigma_i=(u_i)_{\#} \Sigma$ be the (rectifiable) mod 2 flat chain obtained by pushing forward $\Sigma$ by $u_i$.  The spanning hypothesis implies $(u_i)_\# \partial \Sigma=C$ and so $\partial \Sigma_i= C$.  
Moreover, the mass,  $\mathbb{M}(\Sigma_i)$, of $\Sigma_i$ is bounded by $\mbox{Area}(u_i)$ and so \begin{equation}\limsup_{i\to \infty } \mathbb{M}(\Sigma_i)\leq 2\pi.\end{equation}

By \cite{Hardt}, the area-minimizing mod 2 flat chain spanning $C$ is smooth and so invoking either Lemma \ref{orientable} item (1) or item (1) above, this minimizer is a hemisphere.  It follows that, up to passing to a subsequence, $\Sigma_i$ converge in the flat norm to $H$ for some hemisphere $H$ and their masses converge to the mass of $H$.  Up to passing to a further supsequence, we may assume the varifolds $|u_i(\Sigma)|$ converge to some varifold $U_\infty$. 

It follows from \cite[Corollary 1.3]{liuWeakConvergenceVarifolds2022} that the associated varifolds, $V_i$, to the $\Sigma_i$ converge to the varifold, $|H|$ associated to $H$.  It is clear that $|u_i(\Sigma)|\geq V_i$ and so $U_\infty\geq |H|$.  However, since the areas of both varifolds are equal, equality holds which contradicts \eqref{contra}.
\end{proof}

We show that a branched minimal immersion $u: \Sigma \to \mathbb{S}^3$ that spans $C$ which has parameterized area below $6\pi$ is a smooth embedding.
\begin{prop}\label{NoBPProp}
    If $u: \Sigma \to \mathbb{S}^3$ is a branched minimal immersion that spans $C$ and $\mathrm{Area}(|u(\Sigma)|)<6\pi$, then $u$ is a smooth embedding.
\end{prop}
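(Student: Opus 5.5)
The plan is to Schwarz reflect and use monotonicity on the cone, as in the proofs of Lemma \ref{orientable} and Lemma \ref{nonorientable}. By Proposition 3.1 of \cite{Lawson}, reflecting $u$ across $C$ gives a closed branched minimal immersion $u'$ of the doubled surface $\Sigma'$ into $\mathbb{S}^3$. It satisfies $\mathrm{Area}(u')=2\,\mathrm{Area}(u)<12\pi$. For any point $p\in\mathbb{S}^3$, upper semicontinuity of density for the cone $C(u'(\Sigma'))\subset\mathbb{R}^4$ gives
\begin{equation}
\Theta(u',p)\le\Theta(C(u'(\Sigma')),0)=\frac{\mathrm{Area}(u')}{4\pi}<3.
\end{equation}
Here $\Theta(u',p)$ is the density of the varifold $|u'(\Sigma')|$ at $p$. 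Hence every point of the image of $u'$ has density at most $2$, and in fact strictly less than $3$.

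I then rule out branch points. At an interior branch point $x\in\Sigma$ of order $m\ge1$, the tangent cone of $u$ is a plane with multiplicity $m+1\geq 2$. I next use that points of $C$ are special. Near $p=u(x)$ the reflected image is invariant under the reflection $\rho_C$ through $C$, which fixes $C$ pointwise and acts as $-1$ on normal directions. Suppose $u(x)\notin C$. Then $u'$ also has the reflected branch point at $\rho_C(p)\neq p$. Density at least $2$ at two distinct points does not by itself exceed the bound, so this case needs a finer argument.

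For that finer argument I would run a Crofton-type count as in Lemma \ref{AreaLowBnd}. The spanning condition forces generic great circles linked with $C$ to meet $u(\Sigma)$ at least once. Near a point of multiplicity $\ge2$ they meet it at least twice. Combining this with the monotonicity deficit, I would aim to show that a point of density $\ge 2$ forces $\mathrm{Area}(u)\ge 2\pi+\text{(area of a density-}2\text{ cone piece)}$, which I hope is $\geq 6\pi$.

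The cleaner route for the second point is the monotonicity formula centered at $p$ applied to $u'$ in $\mathbb{S}^3$ directly. I would work in $\mathbb{R}^4$ with the cone over $u'$ and use a translated center, as Lemma \ref{orientable} does with $p_i\to0$. The point of the translated center is that the cone over $u'$ contains both the ray through $p$ and the ray through $\rho_C(p)$, each of density $\ge2$. The cone also contains the plane cone over $C$ with density $\geq1$. I would then estimate $\Theta(C(u'),0)$ from below by $3$ using that the cone has density $\ge 2$ along a whole ray. The density is exactly $2$ along the ray, and transversality to $C$ should add at least $1$; making this additivity rigorous is the main obstacle.

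Boundary branch points, with $x\in\partial\Sigma$, become interior branch points of $u'$ on $C$. There the reflection symmetry forces odd-order behaviour and density $\ge 2$ at a point of $C$. For self-intersections, a non-embedded point $p$ with two preimages gives density $\ge2$, and the same dichotomy applies: either $p\in C$, or $p\notin C$ and the reflected double point exists. Finally, once $u$ has no branch points and is injective, it is a smooth embedding by regularity of branched minimal immersions up to the free boundary on a geodesic. If the density-additivity step fails, my fallback is a Li--Yau type bound: a point of multiplicity $k$ in the closed surface $u'$ forces $\mathcal W(u')\ge 4\pi k$. I would then try to get the third sheet from the spanning degree condition.
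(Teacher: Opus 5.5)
\textbf{The gap is the case $p\notin C$.} First, a smaller point about points on $C$, where your argument is close to the paper's. The oddness there comes from the spanning degree: a generic small loop linked with $C$ meets $u(\Sigma)$ an odd number of times. It does not come from the reflection symmetry, which is compatible with even-order branching. And what you must conclude is that a branch point or extra sheet at a point of $C$ produces density at least $3$ on the double, not merely $\ge 2$. Only density $\ge 3$ contradicts $\mathrm{Area}(u')<12\pi$.

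The genuine gap is the case you flag yourself: a branch point or self-intersection at $p\notin C$. None of your three fixes can close it, because the needed estimate is false at the level of the closed double. The Lawson Klein bottle $\tau_{1,2}$ is the Schwarz double of the embedded band $\bar{\tau}_{1,2}$. It has area $2\,\mathrm{Area}(\bar\tau_{1,2})<12\pi$. It has double points along the polar circle $\{z=0\}$, which is disjoint from $C$ and invariant under $\rho_C$. So $C(\tau_{1,2})$ has density $2$ along the rays through $p$ and $\rho_C(p)$ and contains the plane over $C$, yet $\Theta(C(\tau_{1,2}),0)=\mathrm{Area}(\tau_{1,2})/4\pi<3$. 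Hence no additivity argument on $C(u')$ can reach $12\pi$. Neither can a Li--Yau bound on the closed surface $u'$, which gives only $8\pi$ at a double point. At those points of $\tau_{1,2}$ one sheet belongs to $\bar\tau_{1,2}$ and the other to its reflection. In your situation both sheets belong to $u$, and this distinction is invisible to the doubled varifold. The Crofton route fails for a different reason: the double point only raises $n(c,u)$ for great circles passing near $p$. That is a set of small measure, so it cannot lift $\int n(c,u)\,d\mu$ from $2\pi$ to $6\pi$.

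\textbf{The missing idea is to work with the one-sided surface and pay only for its boundary.} Once $u$ is known to be embedded near $C$ (the step above), the three-dimensional cone $C(|u(\Sigma)|)\subset\mathbb{R}^4$ is stationary away from the plane $P$ over $C$ and regular along $P\setminus\{0\}$. So the extended monotonicity formula of Ekholm--White--Wienholtz \cite{EWW} applies. Adjoin the exterior cone from $q\notin P$ over $P$; it is a half three-plane with density $\tfrac12$ at infinity. This gives
\[
\Theta\bigl(C(|u(\Sigma)|),q\bigr)\le \frac{\mathrm{Area}(u)}{4\pi}+\frac12 ,
\]
so any interior point of density $\ge 2$ forces $\mathrm{Area}(u)\ge 6\pi$. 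This is the paper's route.

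Equivalently, for immersions as in Lemma~\ref{EmbNearCLem}, run Li--Yau's dilation argument on $u$ itself rather than on $u'$. The Willmore energy with its boundary term is conformally invariant and equals $\mathrm{Area}(u)$. The areas of the dilates centered at the double point tend to at least $8\pi$. The boundary term costs at most $2\pi$ because $C$ stays a round circle. Together these give $\mathrm{Area}(u)\ge 8\pi-2\pi$. In both versions the boundary contributes half a unit of density, rather than the reflected copy contributing a full sheet. That is exactly the margin between $4\pi$ and $6\pi$.
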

\begin{proof}
Let $u':\Sigma'\to \mathbb{S}^3$ be the branched immersion obtained by Schwarz reflecting across $C$.  It follows that $|u'(\Sigma')|$ is a stationary integral varifold.  By the monotonicity formula applied to $C(u'(\Sigma'))$, for any $p\in \mathbb{R}^4$
$$
\frac{1}{4\pi} \mathrm{Area} (|u'(\Sigma')|)=\frac{1}{2\pi} \mathrm{Area}(|u(\Sigma)|)\geq \Theta (C(|u'(\Sigma')|),p).
$$

As $u$ has degree one, any small curve linked once with $C$ is also linked once with $u|_{\partial \Sigma}$ hence, by $\mathbb{Z}_2$ intersection theory, a generic such curve meets the image of $u$ an odd number of times.  It follows that any boundary branch points have odd order and hence if one exists there is a point of density at least three on $\Sigma'$.  In particular, if there is a boundary branch point, the area of $|u'(\Sigma')|$ is at least $6\pi$ which is precluded by the area assumption.
For the same reason, $u'$ can't have a self-intersection point on $C$.  That is the image of $u'$ and hence of $u$ is embedded and smooth near $C$

To deal with interior branch points, we think of $|u(\Sigma)|$ as a stationary integral varifold in $\mathbb{S}^3 \setminus C$.   We consider the cone $C(|u(\Sigma)|)$ which is then a stationary varifold in $\mathbb{R}^4$.  In fact, since $\Sigma$ is embedded and smooth near $C$ the boundary, $C(|u(\Sigma)|)$ is stationary away from a two plane, $P$, which is the cone over $C$. Moreover,  $C(|u(\Sigma)|)$ is regular on $P$ away from the origin. In particular, $C(|u(\Sigma)|)$ is regular enough to apply the extend monotonicity formula of \cite{EWW, whiteMeanCurvatureFlow2021}.

It follows that for any $p\in \mathbb{R}^4\setminus P$
$$
\frac{1}{4\pi} (\mathrm{Area} (|u(\Sigma)|)+\frac{1}{2} \geq \Theta(C(|u(\Sigma)|, p).
$$
This means that if there is an interior branch point or point of self-intersection, 
$$
\mathrm{Area}(|u(\Sigma)| \geq 6\pi.
$$
Hence, any interior singularity is precluded by the area bound and so we conclude that $u$ is a smooth minimal embedding.
\end{proof}

When $u$ is an immersion without branch points one can directly adapt the argument of \cite{LY} (Fact (iii) pg. 271) to setting of surfaces spanning $C$ and avoid any use of geometric measure theory to prove area embeddedness when the area is small.
\begin{lemma}
 \label{EmbNearCLem}
If $u:\Sigma\to \mathbb{S}^3$ is an immersed minimal surface spanning $C$ and $
\mathrm{Area}(|u(\Sigma)|)<6\pi,
$
then $u$ is an embedding.
\end{lemma}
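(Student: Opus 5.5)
We adapt the Li--Yau argument: a point of multiplicity $k\ge 2$ forces, after a conformal dilation concentrating at that point, area near $4\pi k$ for a closed surface. Here the reflected surface is closed, so the relevant quantity is its area, which is twice that of $u$.

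First we pass to a closed surface. Let $u':\Sigma'\to\mathbb{S}^3$ be the Schwarz reflection of $u$ across $C$ (Proposition 3.1 of \cite{Lawson}). It is a closed immersed minimal surface, smooth across $C$ since $u$ is an immersion up to the boundary, with $\mathrm{Area}(u')=2\,\mathrm{Area}(u)<12\pi$.

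Next we show $u'$ is injective. Suppose $u'(x_1)=u'(x_2)=p$ with $x_1\ne x_2$. For $v\in B^4$ with $v\to p$ radially, the dilations $F_v$ push everything away from $-p$ toward $p$. Each of the two sheets through $p$ then converges as a varifold to a great sphere through $p$ (more precisely to a full sphere's worth of area). Hence
\begin{equation*}
\liminf_{v\to p}\mathrm{Area}(F_v\circ u')\ge 8\pi .
\end{equation*}
Since $u'$ is minimal in $\mathbb{S}^3$, the coordinate functions satisfy $\Delta x=-2x$. The standard computation, $\mathrm{Area}(F_v\circ u')=\int \frac{1-|v|^2}{(1-v\cdot x)^2}\,dA$ expanded to second order, shows $\mathrm{Area}(F_v\circ u')\le \mathrm{Area}(u')$ for all $v$; equivalently, $\mathcal{W}(u')=\mathrm{Area}(u')$ and $\mathcal{W}$ is conformally invariant. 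This only gives $\mathrm{Area}(u')\ge 8\pi$, i.e.\ $\mathrm{Area}(u)\ge 4\pi$, which is too weak.

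To reach $6\pi$ we must use the non-orientability and spanning condition, as in Proposition \ref{NoBPProp}. The self-intersection point $p$ of $u'$ comes in two flavours.

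If $p\in C$, then by reflection symmetry and the odd-intersection property (a small circle linking $C$ meets $u$ an odd number of times), at least three sheets pass through $p$. The dilation argument then gives $\mathrm{Area}(u')\ge 12\pi$, a contradiction.

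If $p\notin C$, the double point of $u$ at $p$ has a mirror double point $p^*$ of $u'$. We would like to dilate toward $p$ so that the sheets through $p$ become two great spheres (area $8\pi$), while the remaining part of the surface contributes as well. To capture that contribution, we work with $u$ itself rather than $u'$: apply $F_v$ with $v\to p$. Then $F_v(u)$ has boundary $F_v(C)$, which shrinks toward $-p$ and becomes a small circle. So $F_v(u)$ converges to two full great spheres through $p$, plus a piece spanning a vanishing circle, and the boundary term $\int k_g$ tends to $2\pi$.

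The right monotone quantity is the Willmore energy with boundary term. For $u$ minimal with geodesic boundary, $\mathcal{W}(u)=\mathrm{Area}(u)$, and $\mathcal{W}$ is conformally invariant, including the boundary term $\int_{\partial\Sigma}k_g$. Writing $\mathcal{W}(F_v u)=\int_{F_v u}(1+\tfrac14|H|^2)+\int k_g$ and dropping $|H|^2\ge0$, we get
\begin{equation*}
\mathrm{Area}(u)=\mathcal{W}(F_vu)\ge \mathrm{Area}(F_vu)+\int_{\partial}k_g\,dL\to 8\pi-2\pi=6\pi .
\end{equation*}
Here the boundary term tends to $-2\pi$ with the inner-conormal sign convention: the shrinking circle bounds a small disk-like cap, so the co-normal points inward toward the cap. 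This contradicts $\mathrm{Area}(u)<6\pi$.

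The main obstacle is making this limit rigorous. Three things need checking: that the two sheets really capture $8\pi$ in the limit (lower semicontinuity of area under varifold convergence, applied to a neighborhood of each preimage point); that the boundary geodesic curvature integral of $F_v(C)$ converges to exactly $-2\pi$ with the correct sign; and that the portion near the collapsing boundary contributes nonnegatively. Once these hold, together with the case $p\in C$ handled via the reflection and triple-density argument, $u$ is injective and, being an immersion, is an embedding.
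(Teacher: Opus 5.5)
Your route is the paper's. A self-intersection on $C$ becomes a triple point of the Schwarz-doubled minimal surface, forcing $2\,\mathrm{Area}(u)\ge 12\pi$. An interior double point $p\notin C$ is handled by dilating $u$ itself toward $p$, using the conformal invariance of $\mathcal{W}$ with its boundary term, $\mathcal{W}(u)=\mathrm{Area}(u)$, and $\liminf \mathrm{Area}(F_v\circ u)\ge 8\pi$. There are two minor slips. First, with $v\to p$ the map $F_v$ pushes everything toward $-p$, not toward $p$; that is in fact what you use later. Second, for $p\in C$ you do not need the odd-intersection property: $u|_{\partial\Sigma}$ is an immersion of degree one onto $C$, hence a diffeomorphism, so any second preimage of $p$ is interior and the reflection doubles it.

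The step that fails as written is the boundary term. You first say $\int k_g\,dL\to 2\pi$ and then $\to -2\pi$, justified by a ``small disk-like cap'' picture that would actually give $+2\pi$, and hence the absurd bound $10\pi$. In fact there is no universal limit. The conormal of $F_v\circ u$ along $F_v(C)$ is the normalized image under $dF_v$ of the conormal of $u$ along $C$, so the limit depends on how that conormal winds relative to $p$. For instance, take $p=(0,1)$, $v=tp$, and inner conormal $(0,e^{i\psi(\theta)})$ of $u$ at $(e^{i\theta},0)$. A direct computation gives $\int k_g\,dL=-\frac{2t}{1+t^2}\int_0^{2\pi}\cos\psi(\theta)\,d\theta$. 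Its limit is $-2\pi$ for $H_0$, $+2\pi$ for $H_\pi$, and $0$ for $\psi=2\theta$ (the conormal of the Lawson band). So the second item on your checklist is false.

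It is also unnecessary. As in the paper, use only the bound valid for every $v$: $F_v(C)$ is a round circle traversed once, so $\left|\int k_g\,dL\right|\le\int \kappa\,dL=2\pi\cos\rho\le 2\pi$, where $\kappa$ is its curvature in $\mathbb{S}^3$ and $\rho$ its geodesic radius. Then $\mathrm{Area}(u)\ge \mathrm{Area}(F_v\circ u)-2\pi$ for all $v$, and letting $v\to p$ gives $\mathrm{Area}(u)\ge 6\pi$. Your third item is vacuous, since area is nonnegative.
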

\begin{proof}
  First note that if $u$ is not an embedding near $C$, then the spanning condition ensures there is a point $p\in C$ so that either $u^{-1}(p)$ contains at least three points  or $u^{-1}(p)$ contains at least two points with only one on $\partial N$.  In either case, it follows that if $\Sigma'$ is the surface obtained by doubling $\Sigma$ using Schwarz reflection, then $p$ is at least a triple point of $\hat{\Sigma}$. Hence, by \cite{LY}, $\mathrm{Area}(\hat{\Sigma})\geq 12 \pi$.  As such, when $u$ is not an embedding, any point of self-intersection, $p$, satisfies $p \in \Sigma \setminus C$.

  Consider the family of conformal automorphisms, $\phi_t:\mathbb{S}^3\to \mathbb{S}^3$ so that $\phi_t(\pm p)=\pm p$, $\phi_0$ is the identity map and $\lim_{t\to 1^-} \phi_t(q)=-p$ for $q\neq p$.  Following \cite{LY}, we may use the Gauss equations and Gauss-Bonnet theorem to obtain, for $\Sigma_t= \phi_t(\Sigma)$,
$$
\int_{\Sigma_t}  1+\frac{1}{4}|\mathbf{H}_{\Sigma_t}|^2 dA+\int_{\partial \Sigma_t} k_{\partial  \Sigma_t} dL = \frac{1}{2}\int_{\Sigma_t} |\mathring{\mathbf{A}}_{\Sigma_t}|^2 dA+2\pi(g(\Sigma_t)-1) 
$$
where here $\mathring{\mathbf{A}}_{\Sigma_t}$ and  $k_{\partial  \Sigma_t}$ are, respectively, the trace free second fundamental form of $\Sigma_t$ and geodesic curvature of $\partial \Sigma_t$.

As $\partial  \Sigma_t= \phi_t(\partial \Sigma)$ is a round circle and $u$ is an embedding on $\partial \Sigma$,  the following estimate holds:
$$
\left| \int_{\partial \Sigma_t } k_{\partial  \Sigma_t} dL\right| \leq 2\pi.
$$
The conformal invariance of the right hand side implies that
$$
-2\pi+\int_{\Sigma_t}  1+\frac{1}{4}|\mathbf{H}_{\Sigma_t}|^2 dA\leq \mathrm{Area}(\Sigma).
$$
As $p$ is a point of self-intersection of $\Sigma$ we have
$$
8\pi \leq \liminf_{t\to \infty} \int_{\Sigma_t}  1+\frac{1}{4}|\mathbf{H}_{\Sigma_t}|^2 dA
$$
from which the claim follows.
\end{proof}
\subsection{Index bounds}\label{indexsection}
In this section we discuss basic facts about the Morse index of non-orientable minimal surfaces with boundary $C$.

We suppose $\Sigma$ is a non-orientable minimal surface bounded by $C$.  We allow $\Sigma$ to be immersed which we take to mean that there is a minimal immersion of a non-orientable surface with boundary $\phi:N\to \Sigma\subset \mathbb{S}^3$ with $\phi: \partial N \to C$ an embedding. Let $\pi: \Sigma'\to \Sigma$ by the oriented double cover of $\Sigma$ and let $\zeta: {\Sigma'}\to {\Sigma'}$ be the deck transform.  If $\Sigma$ is immersed this means there is an immersion $\phi':N'\to \Sigma'$ with $\pi:N'\to N$ the orientation double cover and $\phi\circ \pi=\phi'$ and $\zeta:N'\to N'$.  We note that ${\Sigma'}$ has two boundary components, $\partial {\Sigma'}={C'}_+\cup {C'}_-$ and $\zeta(C_\pm')=C_\mp'$. 
 
 We say a function $u$ on $\Sigma'$ is \emph{even} if $u\circ \zeta=u$ and is \emph{odd} if $u\circ \zeta =-u$.   On $\Sigma'$ there is a well defined choice of unit normal $\mathbf{n}$ that satisfies $\mathbf{n}\circ \zeta=-\mathbf{n}$.    Clearly, for any odd function $u$, there is a unique section $\mathbf{s}$ of the normal bundle of $\Sigma$ so that $u \mathbf{n}=\mathbf{s}\circ \pi$. The converse is also true, for any such section, there is a unique choice of odd function on $\Sigma'$.  These concepts are adapted to the immersed setting in the obvious way.

Let us denote by $L_{\Sigma}$ the stability operator on on the normal bundle of $\Sigma$ which we express as
$$
L_{\Sigma}=\Delta^\perp_{\Sigma} +|A_{\Sigma}|^2 +2
$$ 
and we note that while the second fundamental form is not globally well defined on $\Sigma$, its squared norm $|A_{\Sigma}|^2$ is.
One readily verifies that if $\mathbf{e}_i^\perp$ is the orthogonal projection of the coordinate vector fields $\mathbf{e}_i$ to the normal bundle, then
$$
-L_{\Sigma} \mathbf{e}_i^\perp=-2 \mathbf{e}_i^\perp.
$$
In order to study the spectrum of the stability operator on $\Sigma$, it is convenient to consider the spectrum of the operator
$$
L_{\Sigma'}=\Delta_{\Sigma'} +|A_{\Sigma'}|^2 +2
$$
restricted to odd functions on $\Sigma'$.  The (Dirichlet) Morse index of $\Sigma$ is the dimension of the space of odd Dirichlet eigenfunctions of $-L_{\Sigma'}$ which have negative eigenvalue.  In this setting one verifies that, after choice of unit normal, $\mathbf{n}$, $n_i=\mathbf{n}\cdot \mathbf{e}_i$ are odd, satisfy 
\begin{equation}\label{eigenvalue}
-L_{\Sigma'} n_i=-2 n_i\end{equation}
and, on $C'=\partial \Sigma'$,
\begin{equation}\label{dirichlet}
n_1=n_2=0 \mbox{ and } \partial_{\nu'} n_3=\partial_{\nu'} n_4=0.
\end{equation}
That is, $n_1$ and $n_2$ are odd Dirichlet eigenfunctions and $n_3$ and $n_4$ are odd Neumann eigenfunctions.  In summary, if
$$
\mathcal{U}'_-=\set{u\in C^\infty(\Sigma') : - L_{\Sigma'}u =\lambda u, u|_{\partial \Sigma'}=0, u\circ \zeta =-u, \lambda<0}
$$
is the set of odd Dirichlet eigenvalues of $-L_{\Sigma'}$ with negative eigenvalue, then $n_1,n_2\in \mathcal{U}'$ and the Dirichlet Morse index of $\Sigma$, can be computed as
$$
\mathrm{ind}_D(\Sigma)=\dim \mathcal{U}'_-.
$$

We observe the following lower bound on index for non-flat $\Sigma$ (see J. Simons' Theorem 5.1.1 \cite{Simons}):
\begin{prop}\label{IndLowBndProp} If $\Sigma$ is not a hemisphere, then
	$$\mathrm{ind}_D(\Sigma)\geq 2.$$
\end{prop}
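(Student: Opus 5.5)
The natural approach is to show that $n_1$ and $n_2$, which the discussion above already places in the negative Dirichlet eigenspace, are linearly independent whenever $\Sigma$ is not a hemisphere. Then $\dim\mathcal{U}'_-\geq 2$ follows at once. By \eqref{eigenvalue} and \eqref{dirichlet}, $n_1$ and $n_2$ are odd functions on $\Sigma'$ that vanish on $\partial\Sigma'$ and satisfy $-L_{\Sigma'}n_i=-2n_i$. Each is therefore either identically zero or an odd Dirichlet eigenfunction with eigenvalue $-2<0$. The whole proof reduces to showing that no nontrivial combination $a n_1+bn_2$ vanishes identically on $\Sigma'$ unless $\Sigma$ is a hemisphere.

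Suppose $a n_1+b n_2\equiv 0$ with $(a,b)\neq(0,0)$, and set $v=a\mathbf{e}_1+b\mathbf{e}_2$, a constant vector in the $z$-plane, so in particular $v\neq 0$. Then $\mathbf{n}\cdot v\equiv 0$ on $\Sigma'$. This says the Killing field on $\mathbb{S}^3$ given by the tangential projection of $v$, i.e.\ the conformal field $x\mapsto v-(v\cdot x)x$, is everywhere tangent to $\Sigma$. The flow of this conformal field consists of the dilations of $\mathbb{S}^3$ fixing $\pm v/|v|$, and $\Sigma$ is invariant under it. Its integral curves are arcs of great circles through $\pm v/|v|$. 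Hence $\Sigma$ is a union of such great-circle arcs, i.e.\ a piece of a cone in $\mathbb{R}^4$ over a curve, which is totally geodesic in $\mathbb{S}^3$ if and only if it is minimal.

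To make this concrete, I would use the standard fact that a minimal surface in $\mathbb{S}^3$ ruled by great-circle arcs through a common point is totally geodesic. Equivalently, the corresponding cone in $\mathbb{R}^4$ over $\Sigma$ is a minimal 3-dimensional cone invariant under translation in the direction $v$. It therefore splits as $\mathbb{R}v\times$ (a minimal 2-cone in $v^\perp\cong\mathbb{R}^3$). A minimal 2-cone in $\mathbb{R}^3$ that is smooth away from the origin is a plane, so the cone is a 3-plane and $\Sigma$ lies in a great sphere. Since $\Sigma$ is minimal, its unique continuation gives $\Sigma\subset S$ for a great 2-sphere $S$ containing $C$. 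Then $\Sigma$ is a domain of $S$ bounded by $C$, which is a hemisphere (the non-orientability rules this out anyway). This contradicts the hypothesis, so $n_1$ and $n_2$ are independent.

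The main obstacle is the rigidity step: deducing total geodesicity from $\mathbf{n}\cdot v\equiv0$ in the possibly immersed, non-orientable setting. The splitting argument is local, though, so it applies on any open set where the immersion is regular, and analyticity propagates the conclusion. An alternative to the cone splitting is to differentiate $\mathbf{n}\cdot v=0$ along $\Sigma$. This gives $A(v^T,\cdot)=0$, and with $H=0$ it forces $A\equiv 0$ wherever $v^T\neq0$, which is a dense set.
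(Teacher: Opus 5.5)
Your proposal is correct and is essentially the paper's argument: you reduce to the linear independence of $n_1,n_2$, and you note that a relation $an_1+bn_2\equiv 0$ makes $v=a\mathbf{e}_1+b\mathbf{e}_2$ tangent to the minimal cone over $\Sigma$ in $\mathbb{R}^4$, so the cone splits off the line $\mathbb{R}v$, is forced to be planar, and $\Sigma$ is a hemisphere. Your pointwise alternative ($A(v^T,\cdot)=0$ together with $H=0$ gives $A=0$ away from the finitely many preimages of $\pm v/|v|$) is a tidy way to make the rigidity step rigorous for immersions; one small slip is that the field $x\mapsto v-(v\cdot x)x$ is conformal, not Killing.
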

\begin{proof}
	If $\mathrm{ind}_D(\Sigma)\leq 1$, then there must be a non-trivial linear relation
	$$
	a \mathbf{e}_1^\perp+b\mathbf{e}_2^\perp=0, a^2+b^2 =1.
	$$
	Taking the cone over $\Sigma$ gives a minimal cone $\mathcal{C}$ in $\mathbb{R}^4$ bounded by the plane $x_3=x_4=0$. This cone, $\mathcal{C}$ contains the line spanned by $a \mathbf{e}_1+b\mathbf{e}_2$, i.e., the cone splits off a line.  One has $\Sigma=\mathcal{C}\cap \mathbb{S}^3$ is smooth only when $\mathcal{C}$ is a half-plane in which case $\Sigma$ would be an hemisphere.
\end{proof}

\section{Conformal family}\label{conformalfamily}
Throughout this section, let $u:M\to\mathbb{S}^3$ be a $C^1$ immersion of a M\"obius band with \begin{equation}\label{good}u|_{\partial M}:\partial M\to C\end{equation} an embedding and, even more, the integral $2$-varifold $|u(M)|$ has density $\frac{1}{2}$ on $C$.

\begin{defn}\label{eulerdef} If $\Sigma=u(M)$ the curve $\partial T_\epsilon(C)\cap \Sigma$ is a $(1,k)$ torus knot for some integer $k\in\mathbb{N}$.  The winding number, or \emph{Euler number} $e(\Sigma)$ is defined to be $k$. 
\end{defn}
If $\Sigma$ is an an embedding of a M\"obius band, then by a result of Whitney \cite{Whitney} and Massey \cite{Ma}, $e(\Sigma)=\pm 2$ -- see also \cite{Yasuhara}. The immersed Lawson band $\overline{\tau}_{1,2k}$ has $e(\overline{\tau}_{1,2k})=2k$ for each $k\geq 1$. To obtain an example with Euler number zero, remove a small disk from an immersion of $\mathbb{RP}^2$ in $\mathbb{S}^3$ (such as Boy's surface).  On the other hand it is easy to see:

\begin{lemma}\label{iseven}
If $|\Sigma|$ has density $\frac{1}{2}$ on $C$, then $e(\Sigma)$ is even. 
\end{lemma}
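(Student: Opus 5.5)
The idea is to compute the linking number of the push-off curve $\gamma_\epsilon=\partial T_\epsilon(C)\cap\Sigma$ with $C$ from the structure of $\Sigma$ near $C$, and then use non-orientability to force it to be even.

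First, density $\frac12$ on $C$ together with the $C^1$ immersion hypothesis and the embedding $u|_{\partial M}$ shows that near $C$ the image $\Sigma$ is a single embedded collar $u(A)$. Here $A\subset M$ is a small annular neighborhood of $\partial M$, and no other sheet of $u(M)$ passes near $C$. Consequently, for small $\epsilon$ the set $\gamma_\epsilon$ is $u$ of a single curve parallel to $\partial M$ in $M$. Write $\nu(s)\in T_{u(s)}\mathbb{S}^3$ for the inner unit conormal along $C$. It lies in the normal plane $N_{p}C=\mathrm{span}(\partial_{w_1},\partial_{w_2})$, the $w$-plane, because $T\Sigma\supset TC$ along $C$. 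We may therefore write $\nu(s)=e^{i\phi(s)}$ with $\phi:C\to\mathbb{S}^1$. Since $\gamma_\epsilon$ is isotopic in $\partial T_\epsilon(C)$ to $s\mapsto \exp_{u(s)}(\epsilon\nu(s))$, the Euler number $e(\Sigma)$ equals the degree of $s\mapsto \nu(s)\in \mathbb{S}^1$ along the single traversal of $C$. Indeed, in $\partial T_\epsilon(C)\cong C\times \mathbb{S}^1$ this curve is the $(1,\deg\phi)$ torus knot, and its linking number with $C$ equals that degree, since the meridian links $C$ once and the standard longitude $\{w=\epsilon\}$ is unlinked.

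Second, we must show $\deg\phi$ is even. The key observation is that $e(\Sigma)$ equals the relative self-linking of $C$ with respect to the framing induced by $\Sigma$. A framing of $C$ given by a normal vector field along $C$ tangent to a surface has a parity that is tied to the orientability of a band neighborhood. Concretely, take a core curve $\alpha$ in $M$ that is orientation-reversing. In a Möbius band, $\partial M$ is homotopic to $\alpha^2$, so a thin band $B$ around $\partial M$ in $M$ is an annulus that double covers the neighborhood of $\alpha$. I would then compute as follows. Thicken $u(M)$ slightly to a normal $I$-bundle $\tilde u:M\tilde\times I\to\mathbb{S}^3$, which is an immersed solid torus. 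The pushoff $\gamma_\epsilon$ and $C$ together bound the annulus $u(B)$, so $e(\Sigma)=\mathrm{lk}(C,\gamma_\epsilon)$ equals the intersection number of $\gamma_\epsilon$ with a Seifert surface for $C$. Choosing that Seifert surface to be a hemisphere $H_\phi$, the intersection count is governed by the framing alone.

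The cleanest route for the parity step is mod 2 intersection theory. Compute $e(\Sigma)\bmod 2$ as the mod 2 intersection of $\gamma_\epsilon$ with the immersed mod 2 chain $u(M)$, after perturbing $\gamma_\epsilon$ off the surface. We have $\mathrm{lk}(C,\gamma)\equiv \#(\gamma\cap S)\bmod 2$ for any mod 2 chain $S$ with $\partial S=C$, and $u(M)$ is such a chain. Push $\gamma_\epsilon$ slightly off $u(M)$ in the normal direction. Because the normal line bundle of $M$ restricted to $\partial M$ is trivial (since $\partial M\simeq\alpha^2$), this push-off $\gamma'$ is well defined and disjoint from $u(B)$. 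Its remaining intersections with $u(M)$ occur at double points of $u$ away from $C$, and these come in pairs by symmetry of the push. So $e(\Sigma)\equiv 0\pmod 2$. I expect this bookkeeping to be the main obstacle: one has to justify that the normal push-off meets the other sheets an even number of times. The alternative is to cite the Whitney–Massey style fact that, for the $w_1$-twisted normal bundle, the relative Euler number is congruent mod 2 to $w_1^2[M,\partial M]$ evaluated suitably, and that it vanishes for immersions into $\mathbb{S}^3$.
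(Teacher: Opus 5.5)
Your last route, mod $2$ intersection theory, is correct and is already a complete proof. The ``main obstacle'' you flag does not exist. By your own first step, density $\frac{1}{2}$ forces $u^{-1}(C)=\partial M$, so there is $\epsilon_0>0$ such that $u(M)\cap T_{\epsilon_0}(C)$ is a single embedded collar. Take $\epsilon<\epsilon_0/2$ and push $\gamma_\epsilon$ off that collar along the (trivial) normal line bundle by a distance $\delta\ll\epsilon$. The resulting curve $\gamma'$ stays inside $T_{\epsilon_0}(C)\setminus C$, so it meets no other sheet, and $\gamma'\cap u(M)=\emptyset$. The push is a homotopy in $\mathbb{S}^3\setminus C$, and $u_\#[M]$ is a mod $2$ chain with boundary $C$. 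Hence
\[
e(\Sigma)=\mathrm{lk}(C,\gamma_\epsilon)=\mathrm{lk}(C,\gamma')\equiv \#_2\bigl(\gamma'\cap u(M)\bigr)=0 \pmod 2 .
\]
You should delete the sentence saying that intersections at double points ``come in pairs by symmetry of the push.'' There are no such intersections here. As a general principle it is also false: a two-sided curve on an immersed surface can cross a double curve an odd number of times, and its push-off then has odd linking number with the boundary. The paragraph about the Seifert surface $H_\phi$ being ``governed by the framing alone,'' and the $w_1^2$ aside, do not establish parity and can be dropped. The identification $e(\Sigma)=\deg\phi$ is correct (it is Proposition \ref{limits}(3)) but is not needed for this lemma.

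The paper argues in the complementary solid torus $T=\overline{\mathbb{S}^3\setminus N}$. By density $\frac{1}{2}$, $\gamma=\Sigma\cap\partial T$ bounds the mod $2$ chain $\Sigma\cap T$, so $[\gamma]=0$ in $H_1(T;\mathbb{Z}_2)\cong\mathbb{Z}_2$. On the other hand, $[\gamma]$ is $k$ times the generator, which is the class of a small loop linking $C$. Your argument is the linking-number dual of this: instead of a chain bounded by $\gamma$ that misses $C$, you use the chain $u(M)$ bounded by $C$ that misses $\gamma'$. The cost is the extra push-off step. It needs the boundary-parallel curve to be two-sided, since the normal bundle of $u$ is the orientation bundle of $M$, which is trivial along a curve homotopic to $\alpha^2$; you identified this correctly. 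The paper's version needs neither the push-off nor any discussion of the normal bundle. You could also have avoided the push-off by computing $\mathrm{lk}_2(C,\gamma_\epsilon)$ as the intersection number of $C$ with the chain $u(M\setminus A_\epsilon)$, where $A_\epsilon=u^{-1}(T_\epsilon(C))$. That chain is bounded by $\gamma_\epsilon$ and disjoint from $C$, and this is exactly the paper's argument.
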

\begin{proof}
Let $N$ denote a small enough tubular neighborhood of $C$, and let $T=\overline{\mathbb{S}^3\setminus N}$ denote the complementary Heegaard torus.  Then by the density one hypothesis, $ \Sigma\cap \partial T$ consists of a simple closed curve winding once around the meridian curve in $\partial T$ and $k$ times around the longitudinal curve.  By the density $\frac{1}{2}$ hypothesis, $\Sigma\cap \partial N$ bounds an immersed surface $\Gamma=\Sigma\cap T$ in $T$.  Thus, $\partial\Gamma$ as a mod $2$ cycle is trivial in $H_1(T;\mathbb{Z}_2)$.  But $H_1(T;\mathbb{Z}_2)$ is generated by the longitidunal class of $\partial T$, and thus $k$ must be even.
\end{proof}

\subsection{Construction of conformal family}
In this section we associate to a map $u$ as above a $2$-parameter ``canonical family" of immersions of $M$ into $\mathbb{S}^3$ parameterized by the interior of the unit disk $D^2$.

For $v\in B^4$, consider the conformal diffeomorphism \begin{equation}F_v:\mathbb{S}^3\to\mathbb{S}^3\end{equation} given by 
\begin{equation}\label{conf1}
F_v(x) := \frac{1-|v|^2}{|x-v|^2}(x-v) - v.
\end{equation}
For $v\neq 0$, the map $F_v$ fixes $\pm\frac{v}{|v|}\in\mathbb{S}^3$ and pushes everything else toward $-\frac{v}{|v|}\in\mathbb{S}^3$.  For $(r,\theta)\in D^2$, we define 
\begin{equation}
v:D^2\to B^4
\end{equation}
by 
\begin{equation}
v(r,\theta) = r(\cos(\theta),\sin(\theta),0,0).
\end{equation}
It is easy to see from \eqref{conf1} that if $x=(x_1,x_2,0,0)$, then $F_{v(r,\theta)}(x_1,x_2,0,0)$ has zero for its third and fourth components.  Since $F_v^{-1}=F_{-v}$, we thus get 
\begin{equation}\label{boundarystll}
F_{v(r,\theta)}(C)=C\mbox{ for all } (r,\theta)\in\mbox{int}(D^2).
\end{equation}

For $(r,\theta)\in\mbox{int}(D^2)$ we define the mappings
\begin{equation}
u_{r,\theta}=F_{v(r,\theta)}\circ u :M\to \mathbb{S}^3.
\end{equation}
Let us write
\begin{equation}
\Sigma_{r,\theta}=u_{r,\theta}(M).
\end{equation}
Note that
\begin{equation}
u_{0,\theta}=u,
\end{equation}
for all $\theta\in S^1$ and we obtain a family $\{u_{r,\theta}\}_{(r,\theta)\in\mbox{int}(D^2)}$ varying continuously in the smooth topology with the property (by \eqref{boundarystll}) that $\partial\Sigma_{r,\theta}=C$ for all $(r,\theta)\in\mbox{int}(D^2)$.  

The following definitions will be useful in describing the behavior of the family $\{u_{r,\theta}\}_{(r,\theta)\in\mbox{int}(D^2)}$ as $r\to 1^-$.
\begin{defn}
For $p\in C$  and $v\in T_p \mathbb{S}^3\setminus T_p C$, let $H(v)\in\mathcal{H}$ be the unique element with \begin{equation}v\in T^{+}_p H(p,v).  \end{equation}  For $p\in C$ and $\Sigma$ a surface with boundary $C$, let us denote by $\nu_p(\Sigma)$ the inner unit co-normal to $\Sigma$ at $p$. 
\end{defn}

For each $\theta\in [0,2\pi]$ define the constant map
\begin{equation}
L_\theta:M\to\mathbb{S}^3
\end{equation}
given by 
\begin{equation}
L_\theta(x) = (\cos(\theta),\sin(\theta), 0,0)\in\mathbb{S}^3.
\end{equation}

Recall we can identify $\mathcal{H}$, the space of hemispheres with boundary $C$ with $\mathbb{S}^1$ by \eqref{identify}.  In particular, for any $v\in\mathbb{S}^1$, denote by $H_v$ the corresponding element in $\mathcal{H}$ and let $\phi:\mathcal{H}\to\mathbb{S}^1$ be the inverse $\phi(H_v)=v$.  In the following, note that, identifying $\mathbb{S}^3$ with $\partial B^4$, $\{v(1,\theta)\}_{\theta\in [0,2\pi]}$ parameterizes the geodesic $C\subset\mathbb{S}^3$.   

The family $\{u_{r,\theta}\}_{(r,\theta)\in\mbox{int}(D^2)}$ extends continuously in the sense of varifolds to $\partial D^2$ and its boundary ``detects" the Euler number:

\begin{prop}[Canonical family] \label{limits}
Suppose $u:M\to\mathbb{S}^3$ is an immersion with boundary $C$ so that $|u(M)|$ has density $\frac{1}{2}$ on $C$. For any $\theta_0\in [0,2\pi]$, the following hold:  
\begin{enumerate}
    \item In the $C^\infty_{loc}(M\setminus u^{-1}(v(1,-\theta_0)),\mathbb{S}^3)$ topology there holds \begin{equation}\lim_{(r,\theta)\to (1^-,\theta_0)}u_{r,\theta}= L_{-\theta_0}.\end{equation}
\item In the sense of varifolds, there holds
\begin{equation}\label{limit}
\lim_{(r,\theta)\to (1^-,\theta_0)} |\Sigma_{r,\theta}|= |H(\nu_{v(1,\theta_0)}(\Sigma))|=:|E_\Sigma(\theta_0)|,
\end{equation}
where $E_\Sigma(\theta_0)\in\mathcal{H}$.
\item The degree of the map $F_\Sigma:\partial D^2\to \mathbb{S}^1$ given by 
\begin{equation}
F_\Sigma(\theta):=\phi\circ E_\Sigma(\theta)=\phi\circ H( \nu_{v(1,\theta)}(\Sigma)),
\end{equation}
is equal to the Euler number $e(\Sigma)$.
\item The family $\{|\Sigma_{r,\theta}|\}_{(r,\theta)\in\mbox{int}(D^2)}$ extends continuously in the sense of varifolds to $\partial D^2$.
\end{enumerate}

\end{prop}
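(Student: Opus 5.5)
The plan is to work directly from the explicit formula \eqref{conf1} for $F_v$. The key observation is that, for $v$ close to a point $p\in C$, the map $F_v$ is, up to a uniformly controlled conformal error, a huge dilation centered at $p$ followed by an inversion. More precisely, after stereographic projection from $-p$, the map $F_{tp}$ is a homothety centered at (the image of) $p$ with factor $\sim(1-t)^{-1}$. Write $p_\theta=v(1,\theta)$, so that $v(r,\theta)=rp_\theta$.

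\emph{Item (1).} For $x\neq p_{\theta_0}$, the factor $\frac{1-|v|^2}{|x-v|^2}$ tends to $0$ as $v\to p_{\theta_0}$. Hence $F_v(x)\to -p_{\theta_0}$, and this is the constant $L$ of the statement after matching the paper's angle convention for the antipode of $p_{\theta_0}$ on $C$. The convergence is uniform in $C^k$ on compact subsets of $\mathbb{S}^3\setminus\{p_{\theta_0}\}$, since all derivatives of $x\mapsto \frac{1-|v|^2}{|x-v|^2}(x-v)$ carry the factor $1-|v|^2$. Composing with $u$ gives convergence in $C^\infty_{loc}(M\setminus u^{-1}(p_{\theta_0}))$.

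\emph{Item (2).} This is the main step. First I would use the density-$\tfrac12$ hypothesis together with the $C^1$ regularity of $u$ up to the boundary: for small $\delta$, the set $u(M)\cap B_\delta(p_\theta)$ is a single embedded sheet. This sheet is a $C^1$ graph over the half-plane $T^+_{p_\theta}$ spanned by $T_{p_\theta}C$ and $\nu_{p_\theta}(\Sigma)$, with graph slope $o(1)$ as $\delta\to0$, uniformly in $\theta$ by compactness of $C$. I would then split $\Sigma=(\Sigma\cap B_\delta(p_\theta))\cup(\Sigma\setminus B_\delta(p_\theta))$.

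On the outer piece, the differential of $F_{rp_\theta}$ is $O(1-r)$ uniformly, so its image has area $\to0$. This piece therefore contributes nothing to the varifold limit.

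On the inner piece, the explicit dilation picture applies. In the stereographic chart it is a rescaling by $(1-r)^{-1}$ about $p_\theta$, and this rescaling sends the nearly flat sheet to a sheet converging in $C^1_{loc}$ to the half-plane. Pulled back, that half-plane is exactly the hemisphere containing $\nu_{p_\theta}$, namely $H(\nu_{p_\theta}(\Sigma))$. Because $\nu_{p_\theta}$ depends continuously on $\theta$, letting $(r,\theta)\to(1^-,\theta_0)$ jointly yields $|H(\nu_{p_{\theta_0}}(\Sigma))|$.

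The obstacle is controlling the annular region near $\partial B_\delta$, where neither the dilation nor the collapse description is sharp. I would handle it with an area estimate for the conformal image of a $C^1$-small graph over a half-disk, which is comparable to the area of the image of the half-disk itself, followed by $\delta\to0$ along a diagonal sequence.

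\emph{Items (3) and (4).} For item (3), I would identify $\phi(H(\nu))$ with the argument of the $w$-component of $\nu\in T_p\mathbb{S}^3\setminus T_pC$. With this identification, $\theta\mapsto F_\Sigma(\theta)$ is the angle of $\nu_{p_\theta}$ in the normal plane of $C$. The push-off curve $\partial T_\epsilon(C)\cap\Sigma$ is, up to $o(\epsilon)$, the curve
$$
\theta\mapsto\bigl(\sqrt{1-\epsilon^2}\,e^{i\theta},\ \epsilon e^{iF_\Sigma(\theta)}\bigr).
$$
Its number of windings around the core (the linking with $C$, i.e.\ the $k$ in Definition \ref{eulerdef}) is therefore $\deg F_\Sigma$, up to fixing orientation conventions. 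The $C^1$ closeness ensures this is an isotopy invariant, and it agrees with $e(\Sigma)$.

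Item (4) then follows by combining three facts. The family is smooth, hence varifold-continuous, on the interior. Item (2) gives the limit along arbitrary approaches $(r,\theta)\to(1,\theta_0)$. Finally, $\theta_0\mapsto |E_\Sigma(\theta_0)|$ is continuous because $\nu_{p_\theta}$ is continuous. I would define the extension by $|E_\Sigma(\theta_0)|$ on $\partial D^2$; joint continuity at boundary points is exactly the uniform-in-$\theta$ form of item (2).
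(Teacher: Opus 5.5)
Your proposal is correct and follows the same route as the paper, which only sketches it. In both, the density-$\frac12$ hypothesis makes $\Sigma$ a single embedded $C^1$ sheet near $C$. Everything away from $v(1,\theta_0)$ collapses to the antipodal point, while the blow-up at $v(1,\theta_0)$ (a homothety in the stereographic chart from $-v(1,\theta_0)$) converges to the tangent half-plane, i.e.\ to $H(\nu_{v(1,\theta_0)}(\Sigma))$. Continuity of the co-normal along $C$ then gives (3) and (4). Your inner/intermediate/outer decomposition simply supplies the details the paper calls ``easy''.

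One small point concerns item (1). The limit is the constant $-v(1,\theta_0)=L_{\theta_0+\pi}$ on $M\setminus u^{-1}(v(1,\theta_0))$, exactly as you derive from the formula for $F_v$. So the indices $-\theta_0$ in the statement are a misprint rather than an angle convention, and you should say so explicitly.
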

\begin{rmk}
Note in (1) that $u^{-1}$ is well-defined on $C$ by our assumptions \eqref{good} on $u$.
\end{rmk}
\begin{rmk}
Note that the canonical family can also be defined associated to any non-orientable surface $\Sigma$ with boundary $C$ and density $\frac{1}{2}$ on $C$, not necessarily a M\"obius band.
\end{rmk}
\begin{proof}
Since $u$ is an embedding near $\partial M$ and the density of $|u(M)|$ is equal to $\frac{1}{2}$ on $C$, we obtain that for small enough $\varepsilon>0$ $u(M)\cap T_\varepsilon(C)$ is an embedded submanifold, where $T_\varepsilon(C)$ is a tubular neighborhood of $C$.  The limit in item (2) follows easily.  As $u$ is a $C^1$ embedding near $\partial M$, the tangent plane to it at each point $p\in C$ varies continuously and is independent of the sequence of dilations.  This implies (2) and (4). Item (3) is clear from Definition \ref{eulerdef}.  Item (1) is also immediate.
\end{proof}
\subsection{Homotopy class of the conformal family}\label{homotopyclass}
We define the variational class as follows.   Let us first set 
\begin{equation}\Gamma=C^0\bigl(\mbox{int}(D^2), W^{1,2}\cap C^0(M,\mathbb{S}^3)\bigr).\end{equation}


Let us define the relative variational class with respect to $\Sigma$ which enforces a Dirichlet boundary condition at $\partial D^2$ given by the family of disks $\{E_\Sigma\}_{\theta\in [0,2\pi]}$:

\begin{equation}\label{homotopydef}
\Omega_\Sigma =
\left\{
\gamma \in \Gamma
\;\middle|\;
\begin{aligned}
&\gamma(p,*)|_{\partial M} \subset C
    \text{ for all } p\in\operatorname{int}(D^2),\\
&\gamma(p,*)|_{\partial M}:\partial M\to C
    \text{ has degree } 1
    \text{ for all } x\in\operatorname{int}(D^2),\\
&\gamma(re^{i\theta},*)
    \text{ varifold converges to the disk } |E_\Sigma(\theta_0)|
    \text{ as } r\to 1^-\\
&\mbox{ and } \theta\to \theta_0\mbox{ for each } \theta_0\in[0,2\pi].
\end{aligned}
\right\}.
\end{equation}

By the assumptions on $u$ and Proposition \ref{limits}, $\gamma_0:=\{u_{r,\theta}\}_{(r,\theta)\in D^2}\in\Omega_\Sigma$.
Let $\Pi_\Sigma\subset \Omega_\Sigma$ denote the homotopy class of the canonical family $\gamma_0$ defined as follows.   The sweepout $\gamma\in \Omega_\Sigma$ is contained in $\Pi_\Sigma$ if and only if there exists $\tilde{\gamma}_t(p,*)\in C^0(\mbox{int}(D^2)\times [0,1],C^0\cap W^{1,2}(M,\mathbb{S}^3))$ so that
\begin{enumerate}
\item $\tilde{\gamma}_0(p,*)=\gamma_0(p,*)\mbox{ for } p\in\mbox{int}(D^2)$
\item  $\tilde{\gamma}_1(p,1)=\gamma(p,*)\mbox{ for } p\in\mbox{int}(D^2)$,
\item $\tilde{\gamma}_t\in \Omega_\Sigma$ for all $t\in [0,1]$.  
\item If $t\to t_0$ and $re^{i\theta}\to e^{i\theta_0}$ then $|\tilde{\gamma}_t(re^{i\theta},*)|$ converges as varifolds to the hemisphere $|E_\Sigma(\theta_0)|$.\label{condition}
\end{enumerate}

Let us define the ``width" $w(\Pi_\Sigma)$ to be
\begin{equation}
w(\Pi_\Sigma)=\inf_{\Phi\in \Pi_\Sigma}\sup_{p\in D^2}\mbox{Area}(\Phi(p,*)).
\end{equation}
A sequence of sweepouts $\{\gamma_n\}_{n=1}^\infty\in\Pi_\Sigma$ is a \emph{minimizing sequence} if \begin{equation}\sup_{p\in D^2}\mbox{Area}(\gamma_n(p,*))\to w(\Pi_\Sigma).\end{equation}  
A sequence $\gamma_n(p_n,*)\in D^2$ obtained from a minimizing sequence $\{\gamma_n\}_{n=1}^\infty\in\Pi_\Sigma$ with 
\begin{equation}
\mbox{Area}(\gamma_n(p_n,*))\to w(\Pi_\Sigma)
\end{equation}
is called a \emph{min-max sequence}.

\subsection{Non-triviality of conformal family}
Using a degree argument, we show that for M\"obius bands the topological non-triviality expressed in Proposition \ref{limits}, item (3) implies that the canonical family is \emph{geometrically} non-trivial in the following sense:
\begin{prop}\label{nontrivial}
Let $\Sigma$ be an immersed M\"obius band in $\mathbb{S}^3$ spanning $C$ with $e(\Sigma)\neq 0$.  Assume further that the density of $|\Sigma|$ is equal to $\frac{1}{2}$ on $C$.  Then
\begin{equation}
w(\Pi_\Sigma)>2\pi.
\end{equation}
\end{prop}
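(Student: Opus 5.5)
We argue by contradiction, following the degree argument sketched in the introduction. By Lemma \ref{nonorientable}(2), every slice $\gamma(p,*)$ of every $\gamma\in\Pi_\Sigma$ spans $C$ and so has area at least $2\pi$; hence $w(\Pi_\Sigma)\geq 2\pi$. Suppose $w(\Pi_\Sigma)=2\pi$.

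Fix $\varepsilon>0$ smaller than half the $\mathbf{F}$-injectivity scale of $\mathcal{H}$. The curve $\phi\mapsto |H_\phi|$ is an embedded circle in varifold space, so it has a tubular neighborhood $\mathcal{N}_{2\varepsilon}=\{V:\mathbf{F}(V,\mathcal{H})<2\varepsilon\}$ carrying a continuous nearest-point retraction $R:\mathcal{N}_{2\varepsilon}\to\mathcal{H}\cong\mathbb{S}^1$. If this fails, one can instead cover by small $\mathbf{F}$-balls and use a partition of unity, giving a continuous map to $\mathbb{S}^1$ that agrees with the identity up to small error on $\mathcal{H}$. Let $\delta$ be given by Lemma \ref{nonorientable}(3) for this $\varepsilon$. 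Since $w=2\pi$, there is $\gamma\in\Pi_\Sigma$ with $\sup_p \mathrm{Area}(\gamma(p,*))<2\pi+\delta$. By Lemma \ref{nonorientable}(3), $\mathbf{F}(|\gamma(p,*)|,\mathcal{H})<\varepsilon$ for every $p\in\mathrm{int}(D^2)$.

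We then define $\Psi:\mathrm{int}(D^2)\to\mathbb{S}^1$ by $\Psi(p)=\phi\circ R(|\gamma(p,*)|)$. This is continuous because $p\mapsto\gamma(p,*)$ is continuous into $W^{1,2}\cap C^0$, and hence $p\mapsto|\gamma(p,*)|$ is $\mathbf{F}$-continuous; this uses Lemma \ref{varcont} from the appendix. By the boundary condition in \eqref{homotopydef}, $|\gamma(re^{i\theta},*)|\to|E_\Sigma(\theta_0)|$ as $re^{i\theta}\to e^{i\theta_0}$. Since $R$ is the identity on $\mathcal{H}$, $\Psi$ extends continuously to the closed disk $D^2$ with $\Psi|_{\partial D^2}=\phi\circ E_\Sigma=F_\Sigma$.

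By Proposition \ref{limits}(3), $\deg F_\Sigma=e(\Sigma)\neq 0$. A map $\partial D^2\to\mathbb{S}^1$ that extends over $D^2$ is null-homotopic and so has degree $0$, which is a contradiction. Hence $w(\Pi_\Sigma)>2\pi$.

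The main obstacle is the continuity of $\Psi$ up to $\partial D^2$. This requires uniform $\mathbf{F}$-convergence of $|\gamma(p,*)|$ to $|E_\Sigma(\theta_0)|$ as $p\to e^{i\theta_0}$, which is exactly what the definition of $\Omega_\Sigma$ imposes. It also requires checking that the retraction $R$ exists continuously on an $\mathbf{F}$-neighborhood of $\mathcal{H}$. For the latter, we would use the fact that distinct hemispheres $H_\phi,H_{\phi'}$ satisfy $\mathbf{F}(|H_\phi|,|H_{\phi'}|)\geq c\,|\phi-\phi'|$ for small angle differences, and apply a standard partition-of-unity construction with angular averaging on small arcs.
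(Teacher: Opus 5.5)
Your argument is correct, but you build the map to $\mathbb{S}^1$ differently from the paper. The paper never constructs a retraction. It triangulates $D^2$ finely and assigns to each interior vertex $v$ some $G(v)$ with $\mathbf{F}(|u^i_v(\Sigma)|,|H_{G(v)}|)\le\eta/3$. It puts $G=F_\Sigma$ on boundary vertices and edges, joins adjacent vertex values by the short arc (justified by \eqref{cont}), and fills each $2$-cell because its boundary lands in an arc of length less than $\pi$. This gives $G|_{\partial D^2}=F_\Sigma$ exactly. You instead compose the sweepout with a single continuous map from an $\mathbf{F}$-neighborhood of $\mathcal{H}$ to $\mathbb{S}^1$, which is shorter and needs no fineness conditions.

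Your first justification for that map does not hold. Varifold space with the $\mathbf{F}$-metric is not a manifold, and nearest points on $\mathcal{H}$ need not be unique or vary continuously, so the tubular-neighborhood reasoning does not apply. Your fallback is what actually carries the proof. Take an $\varepsilon$-net $\{\phi_j\}$ of $\mathcal{H}$, cutoffs $\psi_j(V)=\max\{0,3\varepsilon-\mathbf{F}(V,|H_{\phi_j}|)\}$, and $R(V)=\sum_j\psi_j(V)e^{i\phi_j}/|\sum_j\psi_j(V)e^{i\phi_j}|$. For $\varepsilon$ small compared with $\eta$ in \eqref{cont}, the denominator is nonzero because every $\phi_j$ with $\psi_j(V)>0$ lies in a short arc. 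That is exactly the paper's key fact \eqref{cont}.

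With this $R$ you no longer have $R=\mathrm{id}$ on $\mathcal{H}$, so your claim $\Psi|_{\partial D^2}=F_\Sigma$ must be replaced by a weaker statement. Namely, $\Psi|_{\partial D^2}$ stays within distance less than $\pi$ of $F_\Sigma$, hence is homotopic to it and still has degree $e(\Sigma)\neq0$; you should state this step explicitly. Both routes rest on the same ingredient: the paper discretizes it, and you package it into one continuous map.

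One point in your favor: invoking Lemma \ref{nonorientable}(3) to get $\mathbf{F}(|\gamma(p,*)|,\mathcal{H})<\varepsilon$ at every interior $p$ is the right justification. The paper attributes this step to Proposition \ref{limits}(4), which concerns only the canonical family itself, not arbitrary elements of $\Pi_\Sigma$.
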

\begin{proof}
Note that \begin{equation} w(\Pi_\Sigma)\geq2\pi.\end{equation}  Otherwise, $C$ can be spanned by a M\"obius band with area less than $2\pi$, contradicting Lemma \ref{nonorientable}.  

By a simple compactness argument, for all $\delta>0$ there exists $\eta=\eta(\delta)>0$ so that the following holds:
\begin{equation}\label{cont}
\mbox{If } \mathbf{F}(|H_v|,|H_{w}|)\leq \eta \mbox{ then }  \mbox{dist}_{\mathbb{S}^1}(v,w)\leq \delta,
\end{equation}
Here we use the standard metric on $\mathbb{S}^1$:
\begin{equation}
\mbox{dist}_{\mathbb{S}^1}(\theta_1,\theta_2)=\min(|\theta_1-\theta_2|,2\pi-|\theta_1-\theta_2|), 
\end{equation}
for any $\theta_1,\theta_2\in [0,2\pi]$.
We now set
\begin{equation}\label{choice}
\delta=\frac{\pi}{4}\mbox{ and } \eta=\eta({\frac{\pi}{4}}   ). 
\end{equation}

Let us assume toward a contradiction that
\begin{equation}
w(\Pi_\Sigma)=2\pi.
\end{equation}
Thus there exists a sequence of sweepouts $\{u^i_p\}_{p\in\mbox{int}(D^2)}\in \Pi_\Sigma$ so that 
\begin{equation}
\sup_{p\in D^2} \mbox{Area}(u^i_p(\Sigma))\leq 2\pi +\frac{1}{i}.  
\end{equation}
By the continuity of the extended conformal family of varifolds $\{|u^i_p(\Sigma)|\}_{p\in D^2}$ (item (4) in Proposition \ref{limits}), choosing $i$ large enough, we get
\begin{equation}\label{allareclose}
\mathbf{F}(|u^i_p(\Sigma)|,\mathcal{H})\leq\frac{\eta}{3} \mbox{ for all }p\in D^2.
\end{equation}
Let $\Delta$ be a triangulation of $D^2$ and for each $k=0,1,2$ denote by $\Delta^k$ its $k$-skeleton.  Choose the triangulation fine enough so that for any two vertices $p,q\in\Delta^0$ that share an edge, we have
\begin{equation}\label{fineness}
\mathbf{F}(|u^i_p(\Sigma)|, |u^i_q(\Sigma)|)\leq\frac{\eta}{3}.
\end{equation}
By item (3) in Proposition \ref{limits} we have that $F_\Sigma=\phi\circ E_\Sigma:\partial D^2\to\mathbb{S}^1$ is continuous.  Thus we can also assume the triangulation is chosen to be fine enough that for any boundary edge $e\in\Delta^1$:
\begin{equation}\label{bondaryassumption}
\mbox{dist}_{\mathbb{S}^1}(F_\Sigma(p),F_\Sigma(q))\leq\frac{\pi}{4}\mbox{ if } \;p,q\in\mbox{supp}(e).
\end{equation}

We will now construct a continuous map 
\begin{equation}
G:D^2\to \mathbb{S}^1.
\end{equation}
so that
\begin{equation}\label{bdry}
G|_{\partial D^2}=F_\Sigma.
\end{equation}
To each interior vertex $v\in \Delta^0\setminus \partial D^2$, define $G(v)\in\mathbb{S}^1$ to by any such element satisfying
\begin{equation}\label{bounds2}
\mathbf{F}(|u^i_v(\Sigma)|,|H_{G(v)}|)\leq \frac{\eta}{3}. 
\end{equation}
This is possible by \eqref{allareclose}.  For a boundary vertex $v\in\Delta^0\cap\partial D^2$, set
\begin{equation}
G(v)=F_\Sigma(v).
\end{equation}

Let us new define $G$ over the $1$-skeleton.  For $p$ in the support of a boundary edge $e\in\Delta^1\cap\partial D^2$, set $G(p)=F_\Sigma(p)$. For an edge $e\in\Delta^1$ at least one of whose vertices is not contained in $\partial D^2$, denote by $v$ and $w$ its boundary vertices. Then by the triangle inequality, \eqref{bounds2} and \eqref{fineness}, we obtain: 
\begin{equation} \begin{aligned} \mathbf{F}\bigl(|H_{G(v)}|,|H_{G(w)}|\bigr) &\leq \mathbf{F}\bigl(|H_{G(v)}|,|u^i_v(\Sigma)|\bigr) +\mathbf{F}\bigl(|u^i_v(\Sigma)|,|u^i_w(\Sigma)|\bigr) \\ &\quad +\mathbf{F}\bigl(|H_{G(w)}|,|u^i_w(\Sigma)|\bigr) \\ &\leq \eta . \end{aligned} \end{equation}
Thus by \eqref{cont} and \eqref{choice} we have
\begin{equation}
\mbox{dist}_{\mathbb{S}^1}(G(v),G(w))\leq \frac{\pi}{4}.
\end{equation}
It follows that there exists a \emph{unique} choice of segment among the two in $\mathbb{S}^1\setminus \{G(v), G(w)\}$ with length less than $\pi$ joining $G(v)$ and $G(w)$.  On the edge $e$, define $G$ by using a homeomorphism between the support of the $1$-cell $e$ and this segment.  In this way, for any two points $x,y\in\mbox{supp}(e)$, we have
\begin{equation}\label{firstedge}
\mbox{dist}_{\mathbb{S}^1}(G(x),G(y))\leq \mbox{dist}_{\mathbb{S}^1}(G(v),G(w))\leq\frac{\pi}{4}.
\end{equation}

If $e\in\Delta^1\cap\partial D^2$ on the other hand with $\partial e = v-w$, by \eqref{bondaryassumption} we also have that for $x,y\in\mbox{supp}(e)$
\begin{equation}\label{secondedge}
\mbox{dist}_{\mathbb{S}^1}(G(x),G(y))=\mbox{dist}_{\mathbb{S}^1}(F_\Sigma(v),F_\Sigma(w))\leq \frac{\pi}{4}.
\end{equation}

Finally, let us extend $G$ over the $2$-skeleton $\Delta^2$.  Fix $\sigma\in\Delta^2$. Observe that by the triangle inequality and \eqref{firstedge} and \eqref{secondedge}, we get that for all $x,y\in\mbox{supp}(\partial \sigma)$ there holds
\begin{equation}
\mbox{dist}_{\mathbb{S}^1}(G(v),G(w))\leq\frac{\pi}{2}.
\end{equation}

Thus $G|_{\partial \sigma}$ is the image under $G$ of a circle and is contained in a segment $S=[a,b]$ of $\mathbb{S}^1$ of length less than $\pi$.  Since $\pi_1(S)$ is trivial, we can extend $G$ from the circle $\mbox{supp}(\partial \sigma)$ to the entire cell $\sigma$.  In this way we can extend $G$ over the $2$-skeleton.  We have thus obtained a continuous map \begin{equation}G:D^2\to\mathbb{S}^1\end{equation} so that by Proposition \ref{limits}, item (3), \eqref{bdry} and Lemma \ref{iseven} the restriction
\begin{equation}
G|_{\partial D^2}:\partial D^2\to \mathbb{S}^1
\end{equation}
has degree $2k$ for some $k\in\mathbb{Z}\setminus\{0\}$.  This is a contradiction.  
\end{proof}
\begin{rmk}\label{sameforgenus}
Note that the proof of Proposition \ref{nontrivial} applies equally to the conformal family associated a surface of any non-orientable genus with non-zero Euler number.  
\end{rmk}
\begin{rmk}
If $\Sigma$ satisfies $e(\Sigma)=0$, it is natural to ask whether $w(\Pi_\Sigma)=2\pi$.
\end{rmk}
\section{Min-max Theorem}\label{sectionminmax}

We need the following Min-Max theorem whose proof will appear in \cite{BKMinmax}.  
\begin{thm}[Min-max for M\"obius bands]\label{minmaxtheorem}
Let $v:M\to \mathbb{S}^3$ be a $C^1$ immersion spanning $C$ so that the density of $|v(M)|$ is equal to $\frac{1}{2}$ on $C$ with $\partial M$ and $C$ oriented so that $v|_{\partial M}:\partial M\to C$ has degree $1$.
Suppose $\Pi_\Sigma$ is the homotopy class of the associated canonical family where $\Sigma:=v(M)$.  Suppose
\begin{equation}w(\Pi_\Sigma)>\sup_{p\in\partial D^2}\mbox{Area}(\Sigma_p)=2\pi.\end{equation}  Then there exists a minimizing sequence $\{w_i(x)\}_{i=1}^\infty\in\Pi_\Sigma$ collection  $\{u_i\}_{i=1}^k$ of branched minimal immersions  
\begin{equation}
u_i:\Gamma_i\to \mathbb{S}^3,
\end{equation}
where each $\Gamma_i$ is either a M\"obius band and equal to $M$ or the standard unit disk $D\subset\mathbb{C}$ and $u_i(\partial \Gamma_i)\subset C$ for each $i\in\{1,...,k\}$.  Moreover, $\Gamma_i=M$ for at most one index $i\in\{1,...k\}$.  

There exists also a (possibly empty) collection $\mathcal{C}$ of minimal immersions
\begin{equation}
v_i:\mathbb{S}^2\to\mathbb{S}^3
\end{equation}
so that any min-max sequence $w_i:M\to\mathbb{S}^3$ for the area functional obtained from the minimizing sequence  $\{w_i(x)\}_{i=1}^\infty\in\Pi_\Sigma$ has the property that
\begin{equation}
\lim_{i\to\infty} |w_i(M)|=\sum_{i=1}^k|u_i(\Gamma_i)|+\sum_{v_i\in\mathcal{C}}|v_i(\mathbb{S}^2)|\mbox{ in the sense of varifolds,}
\end{equation}
Moreover,
\begin{enumerate}
\item Each integral $2$-varifold $|v_i(\mathbb{S}^2)|$ is a great sphere with multiplicity $m_i\in\mathbb{N}\setminus\{0\}$. 
\item If $\Gamma_i$ is a disk, then the integral $2$-varifold $|u_i(\Gamma_i)|$ is a hemisphere with multiplicity $n_i$ for some $n_i\in\mathbb{N}\setminus\{0\}$ and mass $2\pi n_i$.  Moreover, $|\mathrm{deg}(f|_{\partial\Gamma_i})|= n_i$ and $n_i\neq 0$.
\end{enumerate}
There also hold:
\begin{equation}
w(\Pi_\Sigma) = \sum_{i=1}^k|u_i(\Gamma_i)|+\sum_{v_i\in\mathcal{C}} 4\pi m_i.
\end{equation}
and
\begin{equation}\label{degreeintheorem}
1=\sum_{i=1}^k \mbox{deg}(f|_{\partial\Gamma_i})\mbox{     mod } 2, 
\end{equation}
where $\mbox{deg}(f|_{\partial\Gamma_i})$ denotes the integer degree of the mapping $f|_{\partial\Gamma_i}:\partial\Gamma_i\to C$.  

\end{thm}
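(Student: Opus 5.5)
Since this theorem's proof is deferred to \cite{BKMinmax}, the plan is to adapt the Colding--Minicozzi min-max scheme for maps, in the boundary-constrained form of Lin--Sun--Zhou \cite{LSZ} and the variable-conformal-structure form of Zhou \cite{Zhoutorus}, to maps from the M\"obius band $M$ with $\partial M\to C$ of degree one.

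\textbf{Step 1: reduce to an energy functional.} Fix a family of conformal structures on $M$ (the moduli space of M\"obius bands is one-dimensional) and work with the Dirichlet energy $E(u)=\frac12\int_M|du|^2$. Given a minimizing sequence $\gamma_n\in\Pi_\Sigma$, replace each slice by a conformal parametrization, so that area and energy agree up to errors $\to 0$. The boundary condition at $\partial D^2$ is unaffected, since slices near $\partial D^2$ are close to hemispheres of area $2\pi<w(\Pi_\Sigma)$ and are left essentially untouched.

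\textbf{Step 2: tighten.} Use harmonic replacement on small balls, with free-boundary-type replacement on half-balls along $\partial M$ constrained to map into $C$, as in \cite{LSZ}. This is done continuously in the parameter, which yields a tightened minimizing sequence. It stays in $\Pi_\Sigma$ because the deformation is a homotopy preserving the spanning condition, the degree on $\partial M$, and the boundary convergence condition \eqref{condition}. The standard almost-harmonic argument then shows that any min-max sequence $w_i$ with area close to $w(\Pi_\Sigma)$ is $\epsilon$-almost harmonic with respect to its conformal structure.

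\textbf{Step 3: bubble-tree convergence.} For an almost-harmonic sequence there are two cases.
\begin{itemize}
\item If the conformal structures stay in a compact set, the sequence converges weakly to a harmonic, hence branched conformal minimal, map $u:M\to\mathbb{S}^3$ with $u(\partial M)\subset C$.
\item If the structures degenerate, the band collapses along a neck and the limit pieces are disks (the orientable part) together with necks carrying no energy. Here I would use an energy-identity/no-neck argument adapted from \cite{Zhoutorus}.
\end{itemize}
In both cases, energy concentrating at interior points produces harmonic spheres, i.e.\ minimal spheres in $\mathbb{S}^3$, hence great spheres with multiplicity. Concentration at boundary points produces, after rescaling, harmonic half-planes with boundary in a line, i.e.\ disks with $u(\partial D)\subset C$. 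A branched minimal disk with boundary on $C$ has image a hemisphere covered $n_i$ times, with $n_i=|\deg|$, by Lemma \ref{orientable}(3) applied after factoring through the boundary degree. Possible $\mathbb{RP}^2$ bubbles must be ruled out: a harmonic $\mathbb{RP}^2$ would lift to a minimal sphere, so any such bubble is absorbed into the great-sphere terms. The no-neck energy identity then gives the mass formula $w(\Pi_\Sigma)=\sum|u_i(\Gamma_i)|+\sum 4\pi m_i$ and varifold convergence. At most one limit component can be a M\"obius band, since a single $M$ carries only one crosscap.

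\textbf{Step 4: the mod $2$ degree formula \eqref{degreeintheorem}.} Since $w_i|_{\partial M}$ has degree $1$ onto $C$, the boundary curve is split by the boundary bubbles and the body map into pieces whose degrees sum to $1$ up to contributions from neck regions. Along $\partial M$, necks have vanishing energy and their images lie near points of $C$, so they contribute zero degree. Orientation ambiguities on non-orientable pieces only allow sign changes of the degrees, so the identity holds mod $2$.

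\textbf{Main obstacle.} I expect the main obstacle to be Step 3 at the boundary together with the degenerating conformal structure. This requires a no-neck theorem for boundary-constrained almost-harmonic maps on collapsing M\"obius bands, and control of the degree through necks. I would first try a three-circle/Hopf-differential decay estimate on long half-cylinders with Dirichlet-type constraint into $C$, as in \cite{LSZ}.
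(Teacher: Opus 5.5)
The paper defers this proof to a forthcoming article and names only its ingredients: Colding--Minicozzi tightening, Zhou's varying conformal structure, the Lin--Sun--Zhou boundary constraint, Schwarz reflection to closed surfaces, an analysis of $\mathbb{RP}^2$ bubbles, and a mod $2$ degree formula. Your outline follows the same route, but Steps 3--4 have concrete gaps.

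\textbf{Degenerate conformal structures and $\mathbb{RP}^2$ pieces.} Write $M$ as the quotient of $[-T,T]\times S^1$ by $(t,\theta)\mapsto(-t,\theta+\pi)$; the moduli space has two ends. As $T\to\infty$, the two-sided curves parallel to $\partial M$ pinch, and $M$ splits into a disk containing all of $\partial M$ and a punctured $\mathbb{RP}^2$. This is where $\mathbb{RP}^2$ pieces come from (point-rescaling only produces spheres and half-planes), and your description of the degenerate case as producing only disks omits it.

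``Absorbing'' such a piece into the great-sphere terms is not adequate. A nonconstant harmonic $\bar f:\mathbb{RP}^2\to\mathbb{S}^3$ would have mass $2\pi d$, where $d$ is the degree of its lift onto a great sphere, and this need not lie in $4\pi\mathbb{N}$. An odd $d$ breaks the mass formula and the parity count in Corollary \ref{existencecor}: a degree-one hemisphere plus a mass-$2\pi$ term gives width $4\pi$ with no M\"obius band. What you need is that such maps are constant. The lift $f$ is weakly conformal onto a great sphere, hence $\pm$-holomorphic, yet $f\circ A=f$ with $A$ the antiholomorphic antipodal map; this forces $f$ to be constant.

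\textbf{The mod $2$ mechanism.} As $T\to 0$, arcs from $\partial M$ to itself pinch, and the pieces are strips (disks with two boundary punctures). The two sides of a strip are arcs of $\partial M$ traversed with \emph{opposite} orientations relative to the strip. If $a,b$ are their windings measured with the orientation of $\partial M$, the strip has degree $\pm(a-b)$ but contributes $a+b$ to the total $1$. This cancellation inside an orientable piece is why only the mod $2$ identity survives. Your Step 4, which appeals to sign changes of whole non-orientable pieces, does not capture it.

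\textbf{Conformality of the limit.} ``Harmonic, hence conformal'' does not follow on $M$, because holomorphic quadratic differentials on $M$ that are real on $\partial M$ form a one-dimensional space. Conformality has to come from Zhou's mechanism: energy is close to area along the sequence, and the energy identity together with varifold convergence then force $E=\mathrm{Area}$ on every limit piece.

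\textbf{Item (2).} Your argument here fails. Lemma \ref{orientable}(3) covers only disks of boundary degree one, and there is nothing to ``factor through.'' Reflecting a branched minimal disk with $u(\partial D)\subset C$ gives a branched minimal sphere, hence a great sphere $S\supset C$. Identifying $S$ with $\hat{\mathbb{C}}$ so that $C$ is the unit circle, $u=c\prod_j B_{a_j}/\prod_k B_{b_k}$ with $B_a(z)=(z-a)/(1-\bar a z)$. This map covers one hemisphere $n^+$ times and the other $n^-$ times, with $\deg(u|_{\partial D})=n^+-n^-$ and mass $2\pi(n^++n^-)$. For example, $B_a/B_b$ with $a\neq b$ is a degree-zero disk of mass $4\pi$ with boundary branch points.

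So ``a single hemisphere with $|\deg|=n_i$'' needs an extra argument specific to min-max limits, or item (2) must be weakened. What you can prove, and all that Corollary \ref{existencecor} uses, is mass $=2\pi(n^++n^-)\ge 2\pi|\deg|$ with $n^++n^-\equiv\deg\pmod 2$.

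\textbf{Your main obstacle.} The paper's indicated route avoids a new boundary three-circle estimate. Double $M$ along $\partial M$ to a Klein bottle with an isometric involution fixing $\partial M$, as in the mollification section, and work with maps equivariant under the rotation $R$ by $\pi$ about $C$. Half-ball replacement, boundary bubbles and boundary necks then become interior phenomena for equivariant maps on a closed surface. The energy identity and the $C^0$ no-neck control are then inherited from the closed theory.
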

\begin{rmk}
Note that a great sphere in $\mathbb{S}^3$ has index $1$, by Proposition \ref{IndLowBndProp} a M\"obius band with boundary $C$ has index at least $2$, and the hemispheres are stable.  Thus since the index is bounded from above by the number of parameters in the min-max construction (Marques-Neves \cite{MNIndex2} and Y. Sun \cite{IndexSun}) one expects in Theorem \ref{minmaxtheorem} that $k= 0$ if some $\Gamma_i$ is a M\"obius band and $k\leq 2$ if all $\Gamma_i$ are disks.  
\end{rmk}

We also obtain the following rigidity statement in \cite{BKMinmax}:
\begin{thm}\label{rigidity}
Let $v:M\to \mathbb{S}^3$ be an immersion with boundary $C$ so that the density of $|v(M)|$ is equal to $\frac{1}{2}$ on $C$.  Suppose the conformal family $\{v_{r,\theta}\}_{(r,\theta)\in D^2}$ satisfies:
\begin{equation}
\sup_{(r,\theta)\in D^2} \mbox{Area}(v_{r,\theta}(M))=m_0, 
\end{equation}
where $m_0$ is the infimal area of minimal M\"obius bands in $\mathbb{S}^3$ spanning $C$.   Then $v(M)$ is a conformal dilate of a minimal M\"obius band with area $m_0$ and boundary $C$.
\end{thm}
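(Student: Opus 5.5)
The plan is to compare the canonical family of $v$ with the width of $\Pi_\Sigma$, where $\Sigma=v(M)$, and then show that a family achieving the width must contain a minimal slice. Since $\gamma_0=\{v_{r,\theta}\}$ lies in $\Pi_\Sigma$, the hypothesis gives $w(\Pi_\Sigma)\leq m_0$. We know $m_0\leq \mathrm{Area}(\overline{\tau}_{1,2})\approx 4.84\pi<6\pi$, and $\overline{\tau}_{1,2}$ is an admissible competitor. I first need $w(\Pi_\Sigma)>2\pi$, which by Proposition \ref{nontrivial} holds once $e(\Sigma)\neq 0$. I would handle $e(\Sigma)=0$ separately: since $\mathcal{V}_c(\Sigma)\geq \mathrm{Area}(v(M))$, the hypothesis bounds the conformal volume by $m_0<6\pi$. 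Capping off and applying Banchoff's triple point theorem with Li--Yau should then force the conformal volume to be at least $6\pi$, which is a contradiction. This is the capping argument described in the sketch, and I would take it as given.

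With $2\pi<w(\Pi_\Sigma)\leq m_0<6\pi$, I apply Theorem \ref{minmaxtheorem} and classify the limit by mass and by the parity condition \eqref{degreeintheorem}. Suppose first that no $\Gamma_i$ is a M\"obius band. Then the mass is $2\pi\sum n_i+4\pi\sum m_i$. The only value strictly between $2\pi$ and $6\pi$ is $4\pi$, realized by a single great sphere, by two multiplicity-one hemispheres, or by one hemisphere of multiplicity two.
\begin{itemize}
\item A single great sphere has no boundary components, so the degree sum is $0$, which is even.
\item Two multiplicity-one hemispheres have boundary degrees $\pm1,\pm1$, whose sum is even.
\item One hemisphere of multiplicity two has $|\deg|=2$, which is even.
\end{itemize}
Each case contradicts \eqref{degreeintheorem}. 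Hence some $\Gamma_i=M$, and $u_i$ is a branched minimal M\"obius band with boundary on $C$. Any further hemisphere or sphere would add at least $2\pi$, pushing the total mass above $m_0+2\pi>w$, so $u_i$ is the only component. I would then check that $u_i$ spans $C$, so that its area is at least $m_0$; the parity condition should give this. It follows that $w(\Pi_\Sigma)=m_0$, and by Proposition \ref{NoBPProp} the limit is a smooth embedded minimal band.

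Since $w(\Pi_\Sigma)=m_0=\sup_p \mathrm{Area}(v_p(M))$, the canonical family is itself an optimal sweepout. The set $K=\{p:\mathrm{Area}(v_p(M))=m_0\}$ is compact and contained in $\mathrm{int}(D^2)$, because the areas tend to $2\pi$ at $\partial D^2$. Suppose no $v_p$ with $p\in K$ is stationary for area among maps with boundary on $C$. Then for each such $p$ there is a variation vector field tangent to $C$ along $C$ that strictly decreases area to first order. By compactness of $K$, finitely many such fields, patched with a partition of unity on $D^2$ supported near $K$, give a continuous deformation of $\gamma_0$. The deformation fixes a neighborhood of $\partial D^2$, so it stays in $\Pi_\Sigma$, and it lowers the supremum below $m_0$. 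This contradicts $w=m_0$. Therefore some $v_{p_0}=F_{v(p_0)}\circ v$ is stationary, hence a minimal immersion with boundary $C$ and area $m_0$. Then $v(M)=F_{-v(p_0)}(v_{p_0}(M))$ is a conformal dilate of that band.

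I expect the main obstacle to be this pull-down deformation. Two points need care. First, at a slice that is stationary only weakly, or has degenerate first variation, I must derive stationarity from the failure of every decreasing deformation. Second, the deformed maps must remain in $W^{1,2}\cap C^0$ with degree-one boundary on $C$, and the construction must be continuous in $p$. Here I would first use smoothness of the $v_p$, which are $C^1$ and can be mollified as in Section \ref{appendix}, to obtain uniform first-variation bounds on a neighborhood of $K$.
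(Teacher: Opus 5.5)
\textbf{The gap is your treatment of $e(\Sigma)=0$.} The hypothesis only gives $\mathcal{V}^C_c(\Sigma)=\sup_{D^2}\mathrm{Area}(\Sigma_{r,\theta})=m_0$, which is a supremum over the $C$-preserving dilations. The inequalities $\mathcal{V}_c(\Sigma)\geq\mathcal{V}^C_c(\Sigma)\geq\mathrm{Area}(\Sigma)$ bound $\mathcal{V}_c$ from below, not above. So nothing bounds the full conformal volume by $m_0$, and there is no conflict with Proposition \ref{eulerzero}. The triple-point argument also cannot be run inside the canonical family. The conformal factors satisfy $(1-r^2)/|x-v(r,\theta)|^2\leq 1/(x_3^2+x_4^2)$, so they stay bounded away from $C$, which is where the triple point lies.

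In fact the statement as written is false when $e(\Sigma)=0$. Take a ball $B$ with $\overline{B}\cap C=\emptyset$, centered at an interior point of the hemisphere $H_0$. Connect-sum a small Boy's surface onto $H_0$ inside $B$, then continue by a regular homotopy supported in $B$ that pushes the area past $6\pi$.
\begin{itemize}
\item Each member is an immersed M\"obius band equal to $H_0$ near $C$, so it has density $\frac12$ on $C$ and $e=0$.
\item We have $F_{v(r,\theta)}(H_0)=H_0$, and the conformal factors are continuous on $\overline{D^2}\times\overline{B}$, vanishing at $r=1$.
\item Hence $\mathcal{V}^C_c$ varies continuously along the path, from near $2\pi$ to above $6\pi$, so some member has $\mathcal{V}^C_c=m_0$.
\item That member is not a conformal dilate of an area-$m_0$ minimal band spanning $C$. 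Such a band is embedded by Proposition \ref{NoBPProp}, hence has $e=\pm 2$, and conformal maps preserving $C$ preserve $|e|$.
\end{itemize}
So the theorem needs the extra hypothesis $e(\Sigma)\neq 0$. This holds in its only application, the equality case of Theorem \ref{conflowerbounds}. The paper gives no proof of its own here; it defers it to \cite{BKMinmax}.

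With $e(\Sigma)\neq 0$ added, your outline is a sound route. Min-max forces $w(\Pi_\Sigma)=m_0$, so the canonical family is optimal, and a pull-tight near the compact maximal set yields a stationary slice. Two repairs are needed.
\begin{itemize}
\item Exclude extra components using Lemma \ref{nonorientable}(1), since $4\pi+2\pi\geq 6\pi>w$. Do not use area $\geq m_0$ before you know the band spans. Then parity together with area $<6\pi$ gives $|\deg|=1$.
\item Ambient deformations only give varifold stationarity of the $C^1$ slice $v_{p_0}$. To upgrade this to a minimal immersion, either deform the maps themselves, which can be localized to one sheet, or use the extended monotonicity bound from Proposition \ref{NoBPProp}. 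For area $m_0<6\pi$ that bound gives interior density $<2$, hence embeddedness, and then regularity for $C^1$ stationary graphs applies.
\end{itemize}
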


Obtaining harmonic mappings or minimal surfaces from a min-max process goes back to the seminal work of Sacks-Uhlenbeck \cite{SU} in 1981. Many authors have contributed to developments of the theory (see \cite{CM1} for references).  In the 1980s, J. Jost \cite{Jost} and Gulliver-Jost developed a version of Theorem \ref{minmaxtheorem} in the context of harmonic maps using successive harmonic replacements in balls of small $C^0$-oscillation.  

In 2007, T.H. Colding and W. P. Minicozzi \cite{CM1} obtained a min-max result for $2$-spheres using instead harmonic replacement on small energy balls. X. Zhou \cite{Zhoutorus} extended the result to tori and then later in \cite{Zhougenera} to all genera.  Unlike in the prior work of Jost and Colding-Minicozzi, a key point in Zhou's work was to allow the conformal structure to vary in order to obtain minimal surfaces and not merely harmonic mappings.   L. Lin,  A. Sun and X. Zhou \cite{LSZ} further extended the analysis to obtain free boundary minimal disks with boundary constrained to lie in a submanifold of arbitrary positive codimension.  

In the setting of Theorem \ref{minmaxtheorem}, one can Schwarz reflect over the geodesic to obtain closed surfaces, and Theorem \ref{minmaxtheorem} follows largely from these prior works.  

Let us record the following direct consequence of Theorem \ref{minmaxtheorem} asserting that under a width bound of $6\pi$, there is not enough energy for the maps to degenerate to a disk or have a spherical bubble:
\begin{cor}\label{existencecor}
Let $\Sigma$ be an immersed M\"obius band in $\mathbb{S}^3$ with boundary a great circle $C$ so that $|\Sigma|$ has density $\frac{1}{2}$ on $C$.  Suppose \begin{equation}\label{assume}2\pi<w(\Pi_\Sigma)<6\pi.\end{equation}  Then there exists a minimally immersed M\"obius band in $\mathbb{S}^3$ with boundary $C$, area equal to $w(\Pi_\Sigma)$, and with density $\frac{1}{2}$ on the support of $C$.
\end{cor}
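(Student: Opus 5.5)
We apply Theorem \ref{minmaxtheorem}; its hypothesis $w(\Pi_\Sigma)>2\pi$ is part of \eqref{assume}. This gives a minimizing sequence whose min-max sequences converge as varifolds to $\sum_{i=1}^k|u_i(\Gamma_i)|+\sum_{v_i\in\mathcal{C}}|v_i(\mathbb{S}^2)|$, with total mass $w(\Pi_\Sigma)$. Each $\Gamma_i$ is a disk or the M\"obius band $M$, and at most one is $M$. The plan is to use the mass window $(2\pi,6\pi)$ to show the limit consists of exactly one branched minimal M\"obius band and nothing else. Then we upgrade that band via Proposition \ref{NoBPProp}.

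\textbf{Step 1: bookkeeping.} Each sphere component contributes $4\pi m_i\ge 4\pi$, and each disk component is a hemisphere contributing $2\pi n_i$ with $|\deg(f|_{\partial\Gamma_i})|=n_i$. If a M\"obius band component is present, Lemma \ref{nonorientable}(1) gives it area at least $4\pi$. We now check every configuration.
\begin{itemize}
\item \emph{No M\"obius band.} By \eqref{degreeintheorem}, $\sum n_i$ is odd, hence $\ge 1$. The total mass is then $2\pi\sum n_i+4\pi\sum m_i$.
\begin{itemize}
\item If there are no spheres, the mass lies in $2\pi(2\mathbb{N}+1)$. The window $(2\pi,6\pi)$ contains no such value, since $2\pi$ is excluded by the strict lower bound and $6\pi$ by the upper bound.
\item If there is a sphere, the mass is at least $2\pi+4\pi=6\pi$, again too large.
\end{itemize}
\item \emph{A M\"obius band present.} Its area is at least $4\pi$, so adding a hemisphere ($\ge 2\pi$) or a sphere ($\ge 4\pi$) pushes the mass to at least $6\pi$.
\end{itemize}
Hence the limit is a single branched minimal immersion $u:M\to\mathbb{S}^3$ with $u(\partial M)\subset C$ and $\mathrm{Area}=w(\Pi_\Sigma)$. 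By \eqref{degreeintheorem}, $\deg(u|_{\partial M})$ is odd.

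\textbf{Step 2: spanning and regularity.} We need $u$ to span $C$, i.e.\ $|\deg(u|_{\partial M})|=1$, and we need density $\frac12$ on $C$. Suppose $|\deg|=d\ge 3$. Schwarz reflecting across $C$ gives a closed stationary varifold, and monotonicity for the cone in $\mathbb{R}^4$ (as in Proposition \ref{NoBPProp}) gives density at least $d$ at points of $C$ when $d\geq3$. This forces area at least $2\pi d\ge 6\pi$, a contradiction. So the degree is $\pm1$, and after reorienting $C$ the map $u$ spans $C$. Proposition \ref{NoBPProp} then applies because the area is below $6\pi$, so $u$ is a smooth embedding. In particular it is an immersion, $u|_{\partial M}$ is an embedding onto $C$, and the density of $|u(M)|$ on $C$ is $\frac12$.

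\textbf{Main obstacle.} The delicate point is the degree claim in Step 2. The theorem only controls the degree mod 2, and the density at boundary points must be bounded by the multiplicity with which $\partial M$ covers $C$. I would first try to rerun the $\mathbb{Z}_2$ intersection argument from Proposition \ref{NoBPProp}: a small circle linking $C$ meets $u(M)$ at least $d$ times near each boundary point. This gives density $\ge d/2$ for $u$ itself, hence $\ge d$ after reflection. If that fails, I would fall back on the Crofton argument of Lemma \ref{AreaLowBnd}, which gives area at least $2\pi d$ directly.
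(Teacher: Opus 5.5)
Your proposal is correct and follows the paper's argument. It uses the same mass bookkeeping in the window $(2\pi,6\pi)$ together with the parity identity \eqref{degreeintheorem} to rule out every configuration except a single branched minimal M\"obius band, and then uses the area bound below $6\pi$ to control the boundary. Your Step 2 is slightly more careful than the paper, which reads off $\deg(u_1|_{\partial M})=1$ directly from the mod $2$ identity and cites the $6\pi$ bound only for the density $\frac12$ conclusion; your density (or Crofton) argument upgrading odd degree to $\pm1$, and your appeal to Proposition \ref{NoBPProp} to exclude branch points, make those steps explicit.
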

\begin{proof}
 By Theorem \ref{minmaxtheorem}, the degrees of minimal immersions produced sum to an odd number, in particular there is at least one minimal immersion bounded by $C$.  That is, the surfaces $u_i(\Gamma_i)$ is a non-empty collection of disks with corresponding varifold $|u_i(\Gamma_i)|$ having mass $2\pi n_i$ or else exactly one of the $u_i(\Gamma_i)$ is a M\"obius band. 

In the first case, since 
\begin{equation}
1=\sum_{i=1}^k \mbox{deg}(f|_{\partial\Gamma_i}) \mod 2, 
\end{equation}
and $|\deg(f|_{\partial\Gamma_i})|=n_i$ for each $i\in\{1,...k\}$, we get
\begin{equation}\label{sumdegree}
1=\sum_{i=1}^k |n_i| \mod 2, 
\end{equation}
On the other hand, the assumed width bound \eqref{assume} gives that 
\begin{equation}
\sum_{i=1}^k|n_i|<3, 
\end{equation}
from which we deduce $k=1$ and $|n_i|=1$ or $k=2$ and $|n_1|=|n_2|=1$.  The latter case is ruled out by \eqref{sumdegree}.   But if $k=1$ and $n_1=1$, then \eqref{assume} gives that
\begin{equation}\label{wbound}
2\pi<2\pi+\sum_{v_i\in\mathcal{C}}4\pi m_i<6\pi, 
\end{equation}
where $m_i$ is the multiplicity of the associated minimal sphere.   If $\mathcal{C}$ is empty \eqref{wbound} is impossible as it violates the lower bound, while if $\mathcal{C}$ is non-empty, it is also impossible as it violates the upper bound.  

Thus we can assume exactly one of the minimal surfaces $\{\Gamma_i\}_{i=1}^k$, say $\Gamma_1$, is a M\"obius band and the remaining minimal surfaces $\{\Gamma_2,...,\Gamma_k\}$ are disks. By Lemma \ref{nonorientable},
\begin{equation}\label{bbb}
\mbox{Area}(\Gamma_1)\geq 4\pi.
\end{equation}
Since each disk $\{\Gamma_2,...\Gamma_k\}$ has area at least $2\pi$, from the width bound \eqref{assume} and \eqref{bbb} it follows that this collection is empty and $k=1$.  Moreover, this also implies the collection of minimal $2$-spheres $\mathcal{C}$ is empty.

Thus from \eqref{degreeintheorem} we get that $\mbox{deg}(u_1|_{\partial M})=1$, as desired.  By the upper density bound of $6\pi$ \eqref{assume}, the minimal surface $u_1(M)$ has density equal to $\frac{1}{2}$ on the support of $C$.

\end{proof}
\section{Least area minimal M\"obius bands}\label{areaboundssection}
In this section we consider minimal M\"obius bands of least possible area bounded by $C$.
Let $M$ be an abstract M\"{o}bius band and  let
\begin{equation*}
\mathcal{M}=\{u\;|\; u:M\to\mathbb{S}^3\mbox{ is a branched minimal immersion spanning } C\},
\end{equation*}
this is a non-empty set as it contains $u_{LB}$ a parameterization of the Lawson band.
Set 
\begin{equation}
m_0:=\inf_{u\in\mathcal{M}}\mbox{Area}(u)\leq Area(u_{LB})<6\pi.
\end{equation}
Observe that  Lemma \ref{nonorientable} item (1) immediately implies
$$
m_0\geq 4\pi.
$$
This infimum is achieved:
\begin{prop}
There exists $u\in\mathcal{M}$ so that setting $\Sigma:=u(M)$, there holds
\begin{equation}
 \mbox{Area}(\Sigma)=m_0\in [4\pi, 6\pi).
\end{equation}
Morever, we may choose $u$ to be a smooth embedding.
\end{prop}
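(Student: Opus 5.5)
We take a minimizing sequence in $\mathcal{M}$ and pass to a limit; the danger is that the limit degenerates, and the argument rules this out using the area gap below $6\pi$. Concretely, pick $u_j\in\mathcal{M}$ with $\mathrm{Area}(u_j)\to m_0$. Since $m_0\leq \mathrm{Area}(u_{LB})<6\pi$, we may assume $\mathrm{Area}(u_j)<6\pi$ for all $j$. Proposition \ref{NoBPProp} (or Lemma \ref{EmbNearCLem}) then makes each $u_j$ a smooth embedding spanning $C$. Schwarz reflection across $C$ gives closed embedded minimal Klein bottles $\Sigma_j'$ with area $2\mathrm{Area}(u_j)<12\pi$. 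Each $\Sigma_j'$ is invariant under the reflection $\rho_C$ and has fixed topology.

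To obtain a limit, we first get curvature bounds. By Choi--Schoen compactness for embedded minimal surfaces of fixed topology in $\mathbb{S}^3$ (a space of positive Ricci curvature), we expect a subsequence of $\Sigma_j'$ to converge smoothly, with multiplicity one, to a closed embedded minimal surface $\Sigma'_\infty$. The paper's setting has a boundary, but reflecting across $C$ reduces everything to the closed case. Smooth convergence preserves the topology, the $\rho_C$-invariance, and the property of containing $C$. It also gives $\mathrm{Area}(\Sigma'_\infty)=2m_0$.

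Now cut along $C$. The fixed set of $\rho_C$ restricted to $\Sigma'_\infty$ is $C$ itself, since this holds for each $\Sigma_j'$ and passes to the smooth limit. So $\Sigma'_\infty\setminus C$ splits into two halves swapped by $\rho_C$. Each half $\Sigma_\infty$ is a smoothly embedded M\"obius band with boundary $C$, again because this holds for the $u_j$ and passes to the limit. The limit map still has boundary degree $1$ onto $C$, so $\Sigma_\infty$ spans $C$. Hence $\Sigma_\infty\in\mathcal{M}$, its area is $m_0$, and it is embedded. Lemma \ref{nonorientable}(1) gives $m_0\geq 4\pi$, which places $m_0$ in $[4\pi,6\pi)$.

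The main obstacle is the multiplicity-one and no-degeneration step. We must exclude the $\Sigma_j'$ collapsing with multiplicity to a lower-complexity surface, such as a great sphere of multiplicity $2$ or two hemispheres. Our first approach is an area argument. A multiplicity-two limit would have to be a sphere or a hemisphere pair, each of area at least $2\pi$ per hemisphere, so the half-limit would carry a varifold of mass $m_0\geq 4\pi$ made of hemispheres with total boundary multiplicity $2$. This conflicts with the density-$\tfrac12$ structure along $C$ forced by the spanning condition: the density at points of $C$ would be at least $1$, and the upper bound of $6\pi$ caps the densities, as in the proof of Proposition \ref{NoBPProp}. If Choi--Schoen compactness proves awkward to apply, an alternative is to use the parametrizations directly. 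The maps $u_j$ are conformal harmonic with uniformly bounded energy. Bubbling analysis in the style of Sacks--Uhlenbeck would then show that any bubble or conformal degeneration costs at least $2\pi$ (a hemisphere) or $4\pi$ (a sphere) of area. That would leave a residual minimal M\"obius band of area at most $m_0-2\pi<4\pi$, contradicting Lemma \ref{nonorientable}(1).
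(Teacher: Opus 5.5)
There is a genuine gap: the compactness step you rely on does not apply, and neither fallback repairs it. The reflected surfaces $\Sigma_j'$ are closed Klein bottles, and these can never be embedded in $\mathbb{S}^3$. A closed embedded surface in $\mathbb{S}^3$ separates, so it is two-sided and hence orientable; the paper notes exactly this for $\hat{\Sigma}_i$. So Choi--Schoen compactness is not available. Its multiplicity-one step genuinely uses embeddedness: ordered, non-crossing sheets produce a positive Jacobi field, which is impossible when $\mathrm{Ric}>0$, and only then does Allard's theorem remove the finitely many concentration points. For immersed sheets that may cross, neither step works.

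Consequently the ``multiplicity-one and no-degeneration step'' you defer is the whole content of the proposition, and your fallbacks do not supply it.
\begin{itemize}
\item The density argument runs the wrong way. Density is only upper semicontinuous under varifold convergence, so density $\tfrac12$ on $C$ for the approximants does not prevent the limit from having density $1$ or $2$ on $C$. The bound $2m_0<12\pi$ only rules out density $\geq 3$ for the reflected limit.
\item Your list of degenerate limits is unjustified and not exhaustive. Nothing you say excludes, a priori, e.g.\ a union of two great spheres through $C$.
\item The bubbling alternative presupposes a non-constant residual M\"obius band. Under conformal degeneration of $M$, or if the base map becomes constant, all the area may sit in hemispheres (e.g.\ two hemispheres of total area $4\pi$ when $m_0=4\pi$). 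Excluding this needs an energy identity and a boundary-degree parity statement such as \eqref{degreeintheorem}, which you neither state nor prove.
\item Embeddedness does not pass to smooth limits for free. It should come from Proposition \ref{NoBPProp} using $m_0<6\pi$.
\end{itemize}

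The missing idea is the paper's direct control of curvature concentration for the immersed $\hat{\Sigma}_i$. Since $\chi(\hat{\Sigma}_i)=0$, Gauss--Bonnet gives $\tfrac12\int_{\hat{\Sigma}_i}|A|^2=\mathrm{Area}(\hat{\Sigma}_i)<12\pi$. By White's compactness theorem the $\hat{\Sigma}_i$ then converge smoothly away from a finite set $S$. Total-curvature quantization for complete minimal surfaces (including Meeks' $6\pi$ bound in the non-orientable case) makes each point of $S$ carry at least $4\pi$. Hence $|S|\leq 2$, and a blow-up at $p\in S$ is either a multiplicity-one catenoid or has density at least $3$ at infinity. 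Density $\geq 3$ is excluded by monotonicity, since it forces area $\geq 12\pi$. In the catenoid case, the neck misses $C$ (a catenoid contains no line), so it lies in one half $\Sigma_i$. Its two boundary circles are disjoint and homotopic, hence separate the M\"obius band. The three resulting pieces are the cut-off piece $D_i$, its reflection $D_i^*$, and the remainder containing $C$. Each converges to a nontrivial stationary limit of area at least $4\pi$, forcing $\liminf_i\mathrm{Area}(\hat{\Sigma}_i)\geq 12\pi$, a contradiction. Only once $S=\emptyset$ does one obtain smooth convergence to a minimal M\"obius band spanning $C$ of area $m_0$, which is then embedded by Proposition \ref{NoBPProp}.
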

\begin{proof}
Let $\Sigma_i\in\mathcal{M}$ be a minimizing sequence with $\mbox{Area}(\Sigma_i)\to m_0$.  Then for a fixed $\gamma>0$ small and after possibly throwing out a finite number of $i$ we have, 
\begin{equation}
\mbox{Area}(\Sigma_i)\leq m_0+\gamma< 6\pi.
\end{equation}
By Proposition \ref{NoBPProp} all the $\Sigma_i$ are smoothly embedded minimal M\"{o}bius bands spanning $C$.

It will be convenient to consider $\hat{\Sigma}_i$ the immersed minimal surfaces obtained by Schwarz reflecting the $\Sigma_i$ across $C$.  One has
$$
\mbox{Area}(\hat{\Sigma}_i)=2 \mbox{Area}(\Sigma_i).
$$
As $\hat{\Sigma}_i$ is a smooth minimal immersion of a Klein bottle, it cannot be embedded.  All such surfaces satisfy $\mathrm{Area}(\hat{\Sigma}_i)=2m_0+2\gamma<12\pi$. 

By \cite[Theorem 3]{WhiteCpct} and standard elliptic theory, there is a finite set $S\subset \mathbb{S}^3$ so that, up to passing to a subsequence,
$$
\hat{\Sigma}_i\to \hat{\Sigma}_\infty
$$
in $C^\infty_{loc}(\mathbb{S}^3 \setminus S)$
where $\hat{\Sigma}_\infty$ is a surface stationary for area.  In particular, it is a smoothly immersed minimal surface away from $S$.   In fact it is a branched minimal surface, possibly with branch points at the points of $S$. 

We claim $S=\emptyset$.  To that end consider the sequence of Radon measures determined by
$$
\mu_i(U)=\frac{1}{2}\int_{\hat{\Sigma}_i\cap U} |A_{\hat{\Sigma}_i}|^2. 
$$
By the Gauss-Bonnet theorem and minimality of $\hat{\Sigma}_i$ in $\mathbb{S}^3$, as $\chi(\hat{\Sigma}_i)=0$,
\begin{equation}
\frac{1}{2}\int_{\hat{\Sigma}_i} |A_{\hat{\Sigma}_i}|^2=\mbox{Area}(\hat{\Sigma}_i)-2\pi\chi(\hat{\Sigma_i})=\mbox{Area}(\hat{\Sigma}_i).
\end{equation}
Thus by the assumed area bound we get
$$
\mu_i(\mathbb{S}^3)<  12\pi \mbox{ and } \limsup_{i\to \infty} \mu_i(\mathbb{S}^3)=2m_0. 
$$
Up to passing to a subsequence, $\mu_i\to \mu_\infty$ in the weak* topology and
$$
\mu_{\infty}(\mathbb{S}^3)=2m_0 <12\pi.
$$
We note that Meeks \cite{Meeks} showed that any complete non-orientable minimal surface has to have total curvature at least $6\pi$.  Hence, the conclusions of  \cite{WhiteCpct} can be generalized to show that either
$$
\mu_\infty(\set{p})=0, p\not \in S \mbox{ or } \mu_\infty(\set{p})\geq 2\pi k, p\in S, k\geq 2.
$$
Hence,  $|S|\leq 2$. If $S=\emptyset$, then we are done, so we may suppose $|S|\geq 1$.

We further observe that by combining \cite{WhiteCpct},  \cite{SchoenUniq}  and a standard point picking argument (e.g., \cite[Theorem 2.2]{CMBook}), after possibly passing to a further subsequence the following is true: For each $p\in S$,  there are $p_i \to p $ and $\rho_i\to \infty$ so that $\rho_i (\hat{\Sigma}_i-p_i)\to \Gamma$ where here the translation and dilation take place in $\mathbb{R}^4$, the convergence is in the locally smooth sense (possibly with multiplicity) and $\Gamma$ is, after rotating appropriately in $\mathbb{R}^4$, either:
\begin{enumerate}
	\item A catenoid in $\mathbb{R}^3$ with multiplicity one;
	\item A collection of minimal surfaces in $\mathbb{R}^3$, at least one of which is not flat, with $\lim_{r\to \infty} \frac{1}{\pi r^2} \mathrm{Area}(\Gamma\cap B_{r}(q))\geq 3$.
\end{enumerate} 
Equivalently, for any $\epsilon>0$ there is a $\Lambda\geq 1$ so that, for $r_i=\Lambda \rho_i^{-1}\to 0$, either
\begin{enumerate}
	\item  $A_i^\epsilon=B_{r_i}(p_i)\cap \hat{\Sigma}_i$ is an annulus and $\liminf_{i\to \infty}\int_{A_i^\epsilon} |A_{\hat{\Sigma}_i}|^2 \geq 4\pi-\epsilon$ and $\rho_i (A_i-p_i)$ converges to a compact subset of a (appropriately scaled) catenoid;
	\item $\liminf_{i\to \infty}\mbox{Area}(B_{r_i}(p_i)\cap\hat{\Sigma}_i) \geq (3-\epsilon)\pi r_i^2$.
\end{enumerate}
While not needed in the proof we note that a complete classification of the possible minimal surfaces obtained in Case (2) is given in \cite{Lopez}.

We first show that Case (2) cannot occur.  Indeed, suppose for contradiction that  Case (2) holds for $p\in S$.  By the monotonicity formula (\cite{SimomBook}), for any $\delta>0$ there is $r_\delta>0$ small so if $r_\delta>r_2>r_1>0$ and $\Sigma$ is a minimal surface in $\mathbb{S}^3$ and $q$ is some point, then
$$
(1+\delta)\frac{1}{\pi r_2^2} \mbox{Area}(B_{r_2}(q)\cap \Sigma)  \geq  \frac{1}{\pi r_1^2} \mbox{Area}(B_{r_1}(q)\cap \Sigma).
$$
It follows that for fixed $\delta>0$ small, $i$ sufficiently large and any $r_\delta>r>r_i$
$$
(1+\delta)\frac{1}{\pi r^2} \mbox{Area}(B_{r}(p_i)\cap \hat{\Sigma}_i) + \delta \geq 3. 
$$
Letting $i\to \infty$ we obtain, for all $0<r<r_\delta$, that
$$
(1+\delta)\frac{1}{\pi r^2} \mbox{Area}(B_{r}(p)\cap \hat{\Sigma}_\infty) + \delta \geq 3. 
$$
It follows that
$$
\lim_{r\to 0^+} \frac{1}{\pi r^2} \mbox{Area}(B_{r}(p)\cap \hat{\Sigma}_\infty)\geq 3.
$$
This implies, either via conformal blow up or tangent cone analysis that 
$$
\mbox{Area}(\hat{\Sigma}_\infty)\geq 12 \pi,
$$
which is a contradiction.  That is, Case (2) does not occur.

Hence, for each $p\in S$ the situation is as described in Case (1).  Let us fix an $\epsilon>0$ small.  For $i$ is sufficiently large we may take $A_i^\epsilon\subset \Sigma_i$.  This is immediate when $p\not\in C$.  When $p\in C$,  the translation and rescaling of $C$ converges to a straight line.  However,  a catenoid does not contain such a line and so, for $i$ large enough, $A_i^\epsilon\cap C=\emptyset$.  Since $C$ separates $\hat{\Sigma}_i$ into $\Sigma_i$ and its reflection across $C$, it follows that, up to reflecting across $C$, $A_i^\epsilon\subset \Sigma_i$.

It follows that $\gamma_i^{+}\cup \gamma_i^{-}=\partial A_i^\epsilon$ are two curves $\hat{\Sigma}_i$ that correspond to large circles in the limit catenoid.  As they are disjoint curves that are homotopic, and $\Sigma_i$ is a M\"{o}bius band, it follows that both $\gamma_i^{\epsilon,\pm}$ separate $\Sigma_i$. Hence, there is a $D_i\subset \Sigma_i$ so, up to relabelling $\gamma_i^{-}\subset D_i$ and $\partial D_i=\gamma_i^{,+}$ where here $D_i$ is either a disk or a M\"obius band.  There is also a copy of $D_i$ obtained by reflecting across $C$ which we denote by $D_i^{*}$.  We note that, by construction, $C\subset \hat{\Sigma}_i'=\hat{\Sigma}_i\setminus (D_i\cup D_i^{*})$.

We note that, as $D_i$ is not contained inside $B_{r_i}(p_i)$, it follows from the maximum principle that $D_{i}$ is not contained inside $B_{\pi}(p_i)$.  The same is true of $D_{i}^{*}$.  It follows that $D_i$ converge to a non-trivial minimal surface, $D_{\infty}$ in $\mathbb{S}^3\setminus \{p\}$.  The limit is stationary in all of $\mathbb{S}^3$ and so has area at least $4\pi$.  The same is true of $D_{i}^*$.  Finally, as $\hat{\Sigma}_i'$ contains $C$, it also converges to a non-trivial stationary surface which has area at least $4\pi$.  That is 
$$
\liminf_{i\to \infty}\mbox{Area}(\hat{\Sigma}_i) \geq \liminf_{i\to \infty}\mbox{Area}(\hat{\Sigma}_i')+ 2\liminf_{i\to \infty}\mbox{Area}(D_i)\geq 12\pi.
$$
This is a contradiction and implies $\hat{\Sigma}_\infty$ is a smoothly immersed surface and the $\hat{\Sigma}_i$ converge smoothly to the limit in the appropriate sense.  

It follows that the $\Sigma_i$ converge smoothly to a smooth minimal immersion of a M\"{o}bius band $\Sigma_\infty$ spanning $C$ and with area $m_0<6\pi$.  By Proposition \ref{NoBPProp}, $\Sigma_\infty$ is smoothly embedded.

\end{proof}

\subsection{Conformal volume}
Suppose $\Sigma$ is a subset of $\mathbb{S}^3$.  Recall the notion of \emph{conformal volume} $\mathcal{V}_c(\Sigma)$ introduced by Li-Yau:
\begin{equation}\label{liyau}
\mathcal{V}_c(\Sigma)=\sup_{v\in B^4}\mbox{Area}(F_v(\Sigma)).
\end{equation}

Observe from the definition \eqref{liyau} that if $\Sigma\subset \mathbb{S}^3$ is a $C^2$ surface spanning the geodesic $C$ and $\Sigma'=\Sigma\cup\tau(\Sigma)$, where $\tau$ is the Schwarz reflection about $\Sigma$, then 
\begin{equation}\label{double}
2\mathcal{V}_c(\Sigma)\geq \mathcal{V}_c(\Sigma')\geq\mathcal{V}_c(\Sigma).
\end{equation}
The surface $\Sigma'$ is in general a $C^{1,1}$-immersed surface.  If $\Sigma$ is minimal, then $\Sigma'$ is smooth.

When $\Sigma$ and thus $\Sigma'$ are minimal, then a result of Li-Yau is that
\begin{equation}\label{liyauminimal}
\mathcal{V}_c(\Sigma')=\mbox{Area}(\Sigma')=2\mbox{Area}(\Sigma).
\end{equation}

We can now obtain lower bounds for the conformal volume of M\"obius bands with Euler number zero:

\begin{prop}\label{eulerzero}
Let $\Sigma$ be an immersed M\"obius band spanning $C$ with $e(\Sigma)=0$.  Suppose the density of the integral $2$-varifold $|\Sigma|$ on $C$ is equal to $\frac{1}{2}$.  Then 
\begin{equation}
\mathcal{V}_c(\Sigma)\geq 12\pi.  
\end{equation}
In particular, if $\Sigma$ is in addition minimally immersed\footnote{We show in \cite{BernKetProperties} that there is no such minimal immersion.  This generalizes a result of Almgren asserting that $\mathbb{RP}^2$ itself does not minimally immerse in $\mathbb{S}^3$.} then \begin{equation}\mbox{Area}(\Sigma)\geq 6\pi.\end{equation}
\end{prop}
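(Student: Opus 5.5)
We follow the strategy announced in the sketch of the introduction: when $e(\Sigma)=0$ we cap $\Sigma$ off by a disk to obtain an immersed $\mathbb{RP}^2$ without creating new triple points. Banchoff's theorem then forces a triple point of $\Sigma$ itself, and a Li--Yau argument converts that triple point into the conformal volume bound.

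\emph{Step 1 (capping).} By the density $\frac12$ hypothesis, $\Sigma\cap T_\varepsilon(C)$ is an embedded annulus for small $\varepsilon$. Since $e(\Sigma)=0$, the curve $\gamma=\Sigma\cap\partial T_\varepsilon(C)$ is a $(1,0)$ torus knot, i.e.\ a longitude of $\partial T_\varepsilon(C)$ isotopic to $C$ inside the solid torus $T_\varepsilon(C)$. Hence the piece $\Sigma\cap \overline{T_\varepsilon(C)}$, together with $\gamma$, can be replaced by an embedded disk $D\subset T_\varepsilon(C)$ bounded by $\gamma$. We take $D$ to be a slightly pushed-in, smoothed copy of a meridian-type disk: a hemisphere-like cap followed by an annulus in the torus, or simply a small perturbation of a hemisphere $H_\phi$ intersected with the tube and rounded. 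Since $\Sigma\cap T_\varepsilon(C)$ is embedded and $D$ lies in $T_\varepsilon(C)$, the surface $P=(\Sigma\setminus T_\varepsilon(C))\cup D$ is a closed immersed $\mathbb{RP}^2$ (Möbius band plus disk), and every self-intersection of $P$ lies outside $T_\varepsilon(C)$. Indeed, $D$ meets $\Sigma\setminus T_\varepsilon(C)$ only along $\gamma$, provided $\varepsilon$ is small enough that $\Sigma\cap T_\varepsilon(C)$ is the full preimage near $C$; this is exactly what the density $\frac12$ assumption guarantees. Consequently the triple points of $P$ are triple points of $\Sigma$. Stereographically projecting from a point off $P$, Banchoff's theorem \cite{Banchoff} gives a triple point of $P$, hence a point $p\in\Sigma\setminus C$ with $u^{-1}(p)$ containing at least three points.

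\emph{Step 2 (Li--Yau).} Let $F_{t}$ be the pure dilations $F_v$ with $v=t p$, $t\to 1^-$. These fix $\pm p$ and push everything else towards $-p$. Near $p$, each of the three sheets of $\Sigma$ through $p$ is blown up into something converging to a great sphere through $\pm p$. Meanwhile $F_{tp}(\Sigma\setminus\{p\mbox{-neighbourhood}\})$, including the boundary $C$, collapses to $-p$. By lower semicontinuity of area under varifold convergence,
\[
\liminf_{t\to1^-}\mathrm{Area}(F_{tp}(\Sigma))\geq 3\cdot 4\pi=12\pi .
\]
Since $\mathcal{V}_c(\Sigma)\geq \mathrm{Area}(F_{tp}(\Sigma))$ for each $t$, we get $\mathcal{V}_c(\Sigma)\geq 12\pi$. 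This is Li--Yau's Fact for points of multiplicity $k$, and it needs only $C^1$ regularity at $p$ since the tangent planes of the sheets determine the limiting great spheres.

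\emph{Step 3 (minimal case).} If $\Sigma$ is minimal, Schwarz reflect it to $\Sigma'$. By \eqref{liyauminimal} and \eqref{double},
\[
2\,\mathrm{Area}(\Sigma)=\mathcal{V}_c(\Sigma')\geq \mathcal{V}_c(\Sigma)\geq 12\pi,
\]
so $\mathrm{Area}(\Sigma)\geq 6\pi$.

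The main obstacle is Step 1. We must verify carefully that the cap creates no new intersections and that the result is genuinely an immersed $\mathbb{RP}^2$ to which Banchoff applies (generic immersion in $\mathbb{R}^3$). The vanishing Euler number is exactly what makes $\gamma$ bound an embedded disk inside the tube. A nonzero $e$ would instead give a $(1,k)$ curve that bounds no disk in $T_\varepsilon(C)$. We also need a small generic perturbation away from $C$ so that triple points are isolated and transverse, and we must check that this perturbation does not destroy the triple point found in $\Sigma$.
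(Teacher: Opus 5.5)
There is a genuine gap in Step 1, and it is where the real content of the proposition lies. The curve $\gamma=\Sigma\cap\partial T_\varepsilon(C)$ is a $(1,0)$ curve. Inside the tube $T_\varepsilon(C)$ it is therefore homotopic to the core $C$, which generates $\pi_1(T_\varepsilon(C))\cong\mathbb{Z}$. So $\gamma$ bounds no disk at all in $T_\varepsilon(C)$, whatever the value of $e(\Sigma)$. Your own candidate confirms this: $H_\phi\cap T_\varepsilon(C)$ is an annulus bounded by $C$ and a parallel curve, not a disk. What $e(\Sigma)=0$ actually gives is that $\gamma$ has zero linking with $C$. Equivalently, $\gamma$ is null-homotopic in the \emph{complementary} solid torus $T=\overline{\mathbb{S}^3\setminus T_\varepsilon(C)}$, where it bounds a meridian disk. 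But $T$ is exactly where the M\"obius band $M=\Sigma\cap T$ lives, so any such cap may cut through $M$. Every crossing of the cap with a double curve of $M$ is a new triple point, which can change the parity in Banchoff's theorem. Hence your claim that the triple points of $P$ are triple points of $\Sigma$ is unjustified, and the Banchoff step does not go through. This is the actual obstacle, not the genericity issues you list.

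The missing idea, which is what the paper does, is to argue by contradiction assuming $M$ has no triple point, and to cap off in the universal cover of $T$. Let $g$ generate $\pi_1(M)\cong\mathbb{Z}$. Then $\partial M$ represents $2g$ and is null-homotopic in $T$, so $2\iota_*(g)=0$ in the torsion-free group $\pi_1(T)\cong\mathbb{Z}$. Hence $\iota_*\pi_1(M)=0$, and $M$ lifts to $\tilde T\cong D^2\times\mathbb{R}$. The compact lift $\tilde M$ still has no triple points and lies in $D^2\times[-H,H]$. Its boundary lies on $\partial D^2\times\mathbb{R}$ and is isotopic to $\partial D^2\times\{a\}$. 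Now cap $\tilde M$ with two pieces: the annulus in $\partial D^2\times\mathbb{R}$ between $\partial\tilde M$ and $\partial D^2\times\{H\}$, and the disk $D^2\times\{H\}$. Both meet $\tilde M$ only along $\partial\tilde M$. The result is an immersed $\mathbb{RP}^2$ in a $3$-ball with no triple points, contradicting Banchoff. You really must work upstairs here: projecting $D^2\times\{H\}$ back to $T$ gives a meridian disk that $M$ can meet whenever $\tilde M$ spans more than one fundamental domain. Once a triple point in $\Sigma\setminus C$ is obtained this way, your Steps 2 and 3 are correct and match the paper.
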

\begin{proof}

It follows from the density $\frac{1}{2}$ condition on $C$ and $e(\Sigma)=0$ that (letting $N$ denote a small enough closed tubular neighborhood of $C$ whose boundary is transverse to $\Sigma$) the surface $\Sigma\cap N$ consists of a single annular ``ribbon" intersecting the torus $\partial N$ in a single simple closed curve going around once in the longitudinal direction and zero times in the meridian direction.  

We consider in the complementary solid Heegaard torus $T=\overline{\mathbb{S}^3\setminus N}$ the M\"obius band $M=\Sigma\cap T$.  The band $M$ intersects $\partial T$ in a single simple closed curve $\gamma$ which by the previous paragraph bounds a meridian disk in $T$.  We claim that $M$ has a triple point in $T$.  Let us assume toward a contradiction that it does not.

Consider the inclusion $\iota:M\to T$.   Recall $\pi_1(M)\cong\mathbb{Z}$ and $\pi_1(T)\cong\mathbb{Z}$.  Let us denote $\iota_*:\pi_1(M)\to\pi_1(T)$.  If $g\in \pi_1(M)$ is a generator and $b\in \pi_1(M)$ is the element corresponding to a paramterization of $\gamma=\partial M$ then up to reversing the orientation of the parameterization $b=2g$.  The fact that $\partial M$ bounds a disk in $T$ ensures 
\begin{equation}\iota_*(2g)=\iota_*(b) = 0\in\pi_1(T).\end{equation}  
Thus \begin{equation}2\iota_*(g)=0\in \pi_1(T)\cong\mathbb{Z}.\end{equation}
It follows that $\iota_*(g)=0$ which implies \begin{equation}\label{lifting}\iota_*(\pi_1(M))=0.\end{equation}  Let $\tilde{T}$ denote the universal cover of $T$ and let $\pi:\tilde{T}\to T$ be a covering map.  The manifold $\tilde{T}$ is homeomorphic to $D^2\times\mathbb{R}$.  Then by the lifting criterion, \eqref{lifting} implies that $\iota:M\to T$ lifts to a map $\tilde{\iota}:M\to \tilde{T}$ with $\pi\circ\tilde{\iota}=\iota$.  Thus $\tilde{M}:=\tilde{\iota}(M)\subset \tilde{T}$ is compact and has one boundary component which is isotopic to $\partial D^2\times \{a\}$ for any $a\in\mathbb{R}$.  Since the lift of a surface under a covering map has at most as many triple points as the original surface, it follows that $\tilde{M}$ has no triple points in $D^2\times\mathbb{R}$. 

Since the boundary of $\tilde{M}$ is isotopic to that of a meridian disk we can find $H\in\mathbb{R}$ so that \begin{equation}\tilde{M}\cap (D^2\times\{\pm H\})=\emptyset\end{equation} and \begin{equation}\tilde{M}\subset D^2\times [-H,H].\end{equation}  The surface $A\subset \partial D^2\times \mathbb{R}$ bounded between $\partial D^2\times \{H\}$ and $\gamma\subset \partial D^2\times\mathbb{R}$ is an embedded annulus. Adjoining to $\tilde{M}$ the annulus $A$ and the disk $D^2\times\{H\}$ (smoothing an arbitrarily small amount at the corners) we obtain a closed manifold diffeomorphic to $\mathbb{RP}^2$ that is smoothly immersed in the three-ball $D^2\times [-H,H]$ with no triple points.  This contradicts T. Banchoff's theorem \cite{Banchoff} which implies that any smoothly immersed $\mathbb{RP}^2$ in $\mathbb{R}^3$ has an odd number of triple points.  Thus the claim is established and $M$ has at least one triple point $p\in M$.  

By considering conformal dilations based at $p$ we obtain
\begin{equation}
\mathcal{V}_c(\Sigma)\geq 12\pi.
\end{equation} 
Thus by \eqref{double} we get 
\begin{equation}
\mathcal{V}_c(\Sigma')\geq 12\pi.
\end{equation}
By \eqref{liyauminimal}  we obtain when $\Sigma'$ is minimal
\begin{equation}
\mbox{Area}(\Sigma)\geq6\pi.
\end{equation}
\end{proof}
We also need the following fact about conformal volume asserting that maximal volume under conformal dilates is uniquely achieved at the identity.  
\begin{prop}\label{soufi}
Suppose $\Sigma$ is the image of a branched minimal immersion has boundary $C$ and $|\Sigma|$ has density $\frac{1}{2}$ on $C$.  Then if 
\begin{equation}\label{equality}
\sup_{(x,y)\in D^2}\mbox{Area}(\Sigma_{x,y})\leq \mbox{Area}(\Sigma).
\end{equation}
Moreover, if $\mbox{Area}(\Sigma_{x,y})=\mbox{Area}(\Sigma)$ for some $(x,y)\in D^2$ and $(x,y)\neq (0,0)$, then $\Sigma$ is a hemisphere.  
For $x^2+y^2=1$ and $t$ small there holds
\begin{equation}\label{confd}
\mbox{Area}(\Sigma_{tx,ty})=\mbox{Area}(\Sigma)-2t^2\int_{\Sigma'} (xn_1+yn_2)^2dA +O(t^3), 
\end{equation}
where $n_i$ is the component of a choice of normal $\mathbf{n}_{\Sigma'}$ on the orientation double cover $\Sigma'$ of $\Sigma$ in the $\mathbf{e}_i$ direction in $\mathbb{R}^4$.
\end{prop}
\begin{proof}
Li-Yau (equation 3.4 in \cite{LY}) proved that if $\Sigma$ is minimal then for any any conformal map $g\in\mbox{Conf}(\mathbb{S}^3)$, we have
\begin{equation}\label{liyauargument}
\mbox{Area}(\Sigma)=\frac{1}{4}\int_{g(\Sigma)}|H|^2dA + \mbox{Area}(g(\Sigma)). 
\end{equation}
This gives \eqref{equality}.  For the equality case, suppose here is a pair $(x,y)\in D$  so that 
\begin{equation}\label{AreaSigxySig}
\mbox{Area}(\Sigma_{x,y})=\mbox{Area}(\Sigma).
\end{equation}
If $(x,y)\in\mbox{int}(D^2)$, then a result of A. El Soufi and S. Ilias (Theorem 1.1, \cite{ElSoufi}) implies $\Sigma$ is a hemisphere.  If $(x,y)\in \partial D$, then  the density $\frac{1}{2}$ condition and \eqref{AreaSigxySig} implies $\mbox{Area}(\Sigma)=2\pi$ and so, by  Lemma \ref{orientable} item (1), $\Sigma$ is a hemisphere. 

To see \eqref{confd}, observe that the derivative of the one-parameter family of diffeomorphisms $\{F_{tv}\}_{t\in(-\varepsilon,\varepsilon)}$ at $t=0$ is given by:
\begin{equation}\label{variation}
X_{v}(x)=2(\langle x,v\rangle x-v).
\end{equation}
Let us show \eqref{variation}.
Fix \(v\in \mathbb{S}^3\subset \mathbb{R}^4\), with \(|v|=1\), and let \(x\in \mathbb{S}^3\). Consider the one-parameter family
\begin{equation}\label{defn}
F_{t v}(x)
=
\frac{1-t^2}{|x-tv|^2}(x-tv)-tv .
\end{equation}

Since \(|x|=|v|=1\), we have
\begin{equation}
|x-tv|^2
=
|x|^2-2t\langle x,v\rangle+t^2|v|^2
=
1-2t\langle x,v\rangle+t^2 .
\end{equation}

Set
\begin{equation}
s=\langle x,v\rangle .
\end{equation}
Then expanding \eqref{defn} we get
\begin{equation}
F_{t v}(x) = \frac{1-t^2}{1-2ts+t^2}(x-tv)-tv .
\end{equation}
Define\label{at}
\begin{equation}
A(t)
=
\frac{1-t^2}{1-2ts+t^2}.
\end{equation}
We see that
\begin{equation}
A(0)=1.
\end{equation}
Thus let us rewrite:
\begin{equation}
F_{t v}(x)
=
A(t)(x-tv)-tv, 
\end{equation}
\noindent
Differentiating \eqref{at}, we get that
\begin{equation}
A'(0)=2s=2\langle x,v\rangle .
\end{equation}
Thus differentiating \(F_{t v}(x)\) at \(t=0\), we obtain
\begin{equation}
\begin{aligned}
\left.\frac{d}{dt}\right|_{t=0}F_{t v}(x)
&=
A'(0)x+A(0)(-v)-v \\
&=
2\langle x,v\rangle x-v-v \\
&=
2\langle x,v\rangle x-2v, 
\end{aligned}
\end{equation}
as desired. Note that the vector field $X_v(x)$ is indeed tangent to $\mathbb{S}^3$ since for any $x\in\mathbb{S}^3$ we have $|x|^2=1$ and thus
\begin{equation}
\langle X_v(x),x\rangle = 2(\langle x, v\rangle\langle x, x\rangle -\langle v, x\rangle)=0.
\end{equation}

If $\Sigma'$ is the orientation double cover of $\Sigma$ and $\mathbf{n}_{\Sigma'}$ is a choice of unit normal along $\Sigma'$, then we consider the normal part of the vector field $X_{v(x,y)}$:
\begin{equation}\label{normalpart}
\phi_{v}(z)=\langle X_{v}(z),\mathbf{n}_{\Sigma'}(z)\rangle = 2(\langle z,v\rangle z-v), \mathbf{n}_{\Sigma'}(z)\rangle =-2\langle v,\mathbf{n}_{\Sigma'}(z)\rangle, 
\end{equation}
where the last equality follows since $\langle z, \mathbf{n}_{\Sigma'}(z)\rangle=0$ for $z\in\mathbb{S}^3$.  Now set \begin{equation}v(x,y)=\langle x, y, 0,0\rangle.\end{equation}  Thus we get from \eqref{normalpart}
\begin{equation}
\phi_{v(x,y)}(z)=-2(xn_1(z)+yn_2(z)), 
\end{equation}
where $n_i$ is the projection of $\mathbf{n}_{\Sigma'}$ on the $i$th coordinate direction in $\mathbb{R}^4$.
As observed in Section \ref{indexsection} (\eqref{eigenvalue} and \eqref{dirichlet}), $\phi_{v(x,y)}(z)$ is an odd Dirichlet eigenfunction for $L_{\Sigma'}$ with eigenvalue $-2$ which vanishes on $C$.  The expansion \eqref{confd} follows from the standard Taylor expansion of the area functional since the linear term vanishes by minimality and the second derivative is given by 
\begin{equation}
\int_{\Sigma'}\phi_{v(x,y)}L\phi_{v(x,y)} dA = -8\int_{\Sigma'}(xn_1+yn_2)^2dA <0.
\end{equation}
\end{proof}
We now prove the main result of this section:
\begin{thm}\label{leastareamb}
Any $\Sigma\in\mathcal{M}$ with $\mbox{Area}(\Sigma)=m_0$ is smoothly embedded, has $\mbox{ind}_D(\Sigma)=2$ and $e(\Sigma)=\pm 2$.
\end{thm}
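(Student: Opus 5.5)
The plan has three steps: embeddedness, then the Euler number, then the index via a min-max argument on the canonical family.

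\emph{Step 1: embeddedness.} Let $\Sigma=u(M)\in\mathcal{M}$ with $\mathrm{Area}(\Sigma)=m_0$. Since $m_0\le\mathrm{Area}(u_{LB})<6\pi$, Proposition \ref{NoBPProp} shows that $u$ is a smooth embedding. In particular $|\Sigma|$ has density $\frac12$ on $C$.

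\emph{Step 2: the Euler number.} By the result of Whitney and Massey quoted after Definition \ref{eulerdef}, an embedded M\"obius band spanning $C$ has $e(\Sigma)=\pm2$. As a consistency check, Proposition \ref{eulerzero} independently rules out $e(\Sigma)=0$, since that would force $\mathrm{Area}(\Sigma)\ge 6\pi$.

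\emph{Step 3: the lower bound $\mathrm{ind}_D(\Sigma)\ge 2$.} This follows from Proposition \ref{IndLowBndProp}, since $\Sigma$ is not a hemisphere (its area is at least $4\pi$).

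\emph{Step 4: the upper bound $\mathrm{ind}_D(\Sigma)\le 2$.} This is the main work; I would prove it by contradiction, using the canonical family.

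Suppose $\mathrm{ind}_D(\Sigma)\ge3$. By Step 2, $e(\Sigma)\ne0$, so Proposition \ref{nontrivial} gives $w(\Pi_\Sigma)>2\pi$. By Proposition \ref{soufi}, every member of the canonical family has area at most $m_0<6\pi$, so $w(\Pi_\Sigma)\le m_0$.

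The odd Dirichlet eigenfunctions $n_1,n_2$ span the $-2$ eigenspace directions generated by the conformal dilations. If the index is at least $3$, there is an additional odd Dirichlet eigenfunction $\psi$ with negative eigenvalue, $L^2$-orthogonal to $n_1,n_2$. I would deform the canonical family near the origin by
\begin{equation*}
u_{r,\theta}^s=F_{v(r,\theta)}\circ \exp\bigl(s\,\eta(r)\,\psi\,\mathbf{n}\bigr),
\end{equation*}
where $\eta$ is a cutoff equal to $1$ near $r=0$ and $s>0$ is small. The boundary condition on $\partial M$ is preserved because $\psi$ vanishes on $C$. The Hessian of area restricted to the span of $n_1,n_2,\psi$ is negative definite, and the expansion \eqref{confd} controls the directions $n_1,n_2$. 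Together these should give
\begin{equation*}
\sup_{D^2}\mathrm{Area}\bigl(u^s_{r,\theta}(M)\bigr)<m_0 .
\end{equation*}
Away from the origin this uses the strict inequality in Proposition \ref{soufi}: for $(r,\theta)\ne 0$ the area is strictly less than $m_0$, uniformly on compact sets, and tends to $2\pi$ at $\partial D^2$. Near the origin the three-dimensional negative definite quadratic form handles it. This deformation is a homotopy within $\Omega_\Sigma$ satisfying condition \eqref{condition}, so $w(\Pi_\Sigma)<m_0$.

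Next apply Corollary \ref{existencecor}, which is valid since $2\pi<w(\Pi_\Sigma)<6\pi$. It yields a minimal M\"obius band spanning $C$ with area $w(\Pi_\Sigma)<m_0$, contradicting the definition of $m_0$. Hence $\mathrm{ind}_D(\Sigma)\le2$, and combined with Step 3, $\mathrm{ind}_D(\Sigma)=2$.

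The main obstacle is the perturbation estimate in Step 4. One must check that the supremum over the whole deformed disk drops strictly below $m_0$. The delicate region is small $r$, where the second-order expansion must be made uniform in the three parameters $(tx,ty,s)$. This requires showing that the Hessian on $\mathrm{span}\{n_1,n_2,\psi\}$ is negative definite and that the cubic error terms are controlled, as in the Marques--Neves index argument. One must also confirm that the deformed family stays in $\Pi_\Sigma$, i.e.\ that the degree-one boundary condition and the boundary varifold limits are unchanged, which holds because the cutoff $\eta$ vanishes near $\partial D^2$.
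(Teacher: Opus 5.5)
Your proposal is correct and follows essentially the same route as the paper. In both, embeddedness comes from Proposition \ref{NoBPProp}, $e(\Sigma)=\pm 2$ from Whitney--Massey, and the lower index bound from Proposition \ref{IndLowBndProp}. The upper bound is obtained by perturbing the canonical family in a third negative direction orthogonal to $n_1,n_2$, using \eqref{confd} and the strict inequality in Proposition \ref{soufi}, to get $2\pi<w(\Pi_\Sigma)<m_0$ and then contradicting minimality via Corollary \ref{existencecor}. The only differences are minor: you perturb the base surface before dilating and switch the perturbation off near $\partial D^2$ with a cutoff $\eta(r)$, whereas the paper applies an ambient flow $D_t$ to the whole family. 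Your cutoff makes membership in $\Pi_\Sigma$, with the boundary hemispheres unchanged, more transparent than in the paper's version.
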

\begin{proof}
Let $u\in\mathcal{M}$ have $\Sigma:=u(M)$ with area equal to $m_0$. Without loss of generality we can assume 
\begin{equation}\label{areaass}
\mbox{Area}(\Sigma)\leq\mbox{Area}(\bar{\tau}_{1,2})<6\pi.
\end{equation}
Hence, by Proposition \ref{NoBPProp}, $\Sigma$ is smoothly embedded.

If $\mbox{ind}_D(\Sigma)>2$, by Proposition \ref{soufi} and the analogous argument to Proposition 3.1 in \cite{MNRigidity} we can find an element in $\{u'_{x,y}\}_{x\in D^2}\in\Pi_\Sigma$ with 
\begin{equation}\label{lower}
\max_{(x,y)\in D^2}\mbox{Area}(u'_{x,y})< \mbox{Area}(\Sigma). 
\end{equation}
Let us give the details. If $\mbox{ind}_D(\Sigma)>2$, then there exists an odd negative eigenfunction $\phi_3$ for $L_{\Sigma'}$ with negative eigenvalue $-\lambda_3$ on the orientation double cover $\Sigma'$ of $\Sigma$ that is orthogonal to the space of odd eigenfunctions spanned by $\{n_1,n_2\}$, i.e. the $\mathbf{e}_1$ and $\mathbf{e}_2$ components of the unit normal $\mathbf{n}_{\Sigma'}$ to $\Sigma'$.  Let us consider the vector field $V=\phi_3\mathbf{n}_{\Sigma'}$ on $\Sigma'$.  Extend it to be supported on a neighborhood of $\Sigma'$ and let $\{D_t\}_{t\in [-\varepsilon,\varepsilon]}$ denote the one-parameter family of diffeomorphisms of $\mathbb{S}^3$ generated by the flow of $V$. Let us denote for each $(x,y)\in\partial D^2$ (for $t$ and $s$ small enough):
\begin{equation}
g(t,s)=\mbox{Area}(D_t(\Sigma_{sx,sy})).
\end{equation}
Then we have $g_t(0,0)=g_s(0,0)=0$ by minimality of $\Sigma$.  We also have $g_{tt}(0,0)=-\lambda_3<0$.  Letting $\phi_{v(x,y)}=xn_1+yn_2$ we get from \eqref{confd}
\begin{equation}
 g_{ss}(0,0)=\int_{\Sigma'} \phi_{v(x,y)}L\phi_{v(x,y)}dA=-2\int_{\Sigma'} (xn_1+yn_2)^2dA<-C_{x,y}
\end{equation} 
for some $C_{x,y}>0$.  The inequality follows since otherwise, $n_1$ and $n_2$ are linearly dependent on $\Sigma'$ which is impossible by Proposition \ref{IndLowBndProp} as $\Sigma'$ is an annulus and not a hemisphere by assumption.

We also have
\begin{equation}
g_{ts}(0,0)=g_{st}(0,0)=\int_{\Sigma'} \phi_3 L\phi_{v(x,y)}dA=-2\int_{\Sigma'} \phi_3\phi_{v(x,y)}dA=0.
\end{equation}
Thus, for $t$ and $s$ small enough we can expand
\begin{equation}
\mbox{Area}(D_t(\Sigma_{sx,sy}))\leq \mbox{Area}(\Sigma)-\frac{C}{2}s^2-\lambda_3t^2+O(|s|^2+|t|^2)^{3/2}), 
\end{equation}
where $C>0$ is given as 
\begin{equation}
C=\min_{(x,y), x^2+y^2=1}C_{x,y}>0.
\end{equation}
In particular (after relabelling the variables $x$ and $y$):
\begin{equation}\label{perturb}\mbox{Area}(D_t(\Sigma_{x,y}))<\mbox{Area}(\Sigma).\end{equation} 
for $t$ near $0$ and $(x,y)\in D^2$ near $(0,0)\in D^2$.  By Proposition \ref{soufi}, the maximal area of a slice of the family $\{u_{x,y}(\Sigma)\}_{(x,y)\in D^2}$ occurs uniquely at $(x,y)=(0,0)$. Thus by continuity of area under diffeomorphisms and \eqref{perturb} we get that for $t$ small enough there holds:
\begin{equation}
\max_{(x,y)\in D^2}\mbox{Area}(D_t(u_{x,y}))<\mbox{Area}(\Sigma), 
\end{equation}
as desired.  Clearly we have $\{D_t(\Sigma_{x,y})\}_{(x,y)\in D^2}\in \Pi_\Sigma$ for small enough $t$ as shrinking $t$ down to $0$ gives an explicit homotopy and the boundary of the family is unchanged along the homotopy so that all conditions in \eqref{homotopydef} are satisfied.   Thus we can set $u'_{x,y}=D_{t_0}(u_{x,y})$ for some choice of $t_0>0$ so that \eqref{lower} holds.  

From this and the definition of $w(\Pi_\Sigma)$ we  obtain
\begin{equation}
w(\Pi_\Sigma)<\mbox{Area}(\Sigma)\leq \mbox{Area}(\bar{\tau}_{1,2})<6\pi.
\end{equation}
By Proposition \ref{eulerzero}, we can assume without loss of generality that $e(\Sigma)\neq 0$. By Proposition \ref{nontrivial}, we then infer
\begin{equation}
w(\Pi_\Sigma)>2\pi.
\end{equation}
By Corollary \ref{existencecor}, it follows that there exists a minimal M\"obius band $\Sigma'$ with boundary $C$ satisfying
\begin{equation}\label{lowest}
4\pi\leq w(\Pi_\Sigma)=\mbox{Area}(\Sigma')<\mbox{Area}(\Sigma)<6\pi.
\end{equation}
The equation \eqref{lowest} is a contradiction to the fact that $\Sigma$ had least area among such minimal M\"obius bands.  Thus, the index of $\Sigma$ was in fact at most $2$ and by Proposition \ref{IndLowBndProp} the index of $\Sigma$ is equal to $2$. As $\Sigma$ is embedded a result of Whitney \cite{Whitney} and Massey \cite{Ma} -- see also \cite[Corollary 1.1.1]{Yasuhara} --  implies $e(\Sigma)=\pm 2$.
 \end{proof}

\section{Conformal area and Willmore energy}\label{applicationsection}

When $\Sigma$ has boundary $C$, let us define the conformal volume restricted to conformal maps preserving $C$:
\begin{equation}
\mathcal{V}_c^C(\Sigma)=\sup_{(r,\theta)\in D^2}\mbox{Area}(\Sigma_{r,\theta}). 
\end{equation}
Let us denote $\Sigma'=\Sigma\cup\tau(\Sigma)$, where $\tau$ is Schwarz-reflection through $C$.  Note that since for each $(r,\theta)\in D^2$ we have (as both surfaces are isometric):
\begin{equation}
\mbox{Area}(\Sigma_{r,\theta})=\mbox{Area}((\tau(\Sigma))_{r,\theta})
\end{equation}
we get
\begin{equation}\label{twicegood}
2\mathcal{V}_c^C(\Sigma)=\mathcal{V}_c^C(\Sigma').
\end{equation}
Moreover from the definition we have
\begin{equation}\label{better}
 \mathcal{V}_c(\Sigma)\geq \mathcal{V}_c^C(\Sigma)\mbox{ and }  \mathcal{V}_c(\Sigma')\geq \mathcal{V}_c^C(\Sigma').
\end{equation}
We obtain the following lower bounds for conformal volume:
\begin{thm}\label{conflowerbounds}
Let $\Sigma$ be an immersed M\"obius band with boundary a great circle.   If $e(\Sigma)=0$, then
$$
m_0<6\pi\leq \mathcal{V}_c(\Sigma).
$$
If $e(\Sigma)\neq 0$, then
\begin{equation}
m_0\leq \mathcal{V}^C_c(\Sigma)\leq \mathcal{V}_c(\Sigma), 
\end{equation}
with equality if and only if $\Sigma$ is a conformal dilate of an element of $\mathcal{M}^*$. 
\end{thm}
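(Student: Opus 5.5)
We split according to whether $e(\Sigma)$ vanishes. Throughout we assume, as in Section \ref{conformalfamily}, that $|\Sigma|$ has density $\frac12$ on $C$; otherwise a point of $C$ has density at least $\frac32$, and conformal dilation centered there already forces $\mathcal{V}_c(\Sigma)\geq 6\pi$.

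\emph{Case $e(\Sigma)=0$.} Proposition \ref{eulerzero} gives $\mathcal{V}_c(\Sigma)\geq 12\pi$, and in particular $\mathcal{V}_c(\Sigma)\geq 6\pi$. The strict inequality $m_0<6\pi$ comes from $m_0\leq \mathrm{Area}(\bar\tau_{1,2})\approx 4.84\pi$, recorded at the start of Section \ref{areaboundssection}.

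\emph{Case $e(\Sigma)\neq 0$.} The right-hand inequality $\mathcal{V}^C_c(\Sigma)\leq\mathcal{V}_c(\Sigma)$ is \eqref{better}. For the left-hand inequality, Proposition \ref{limits} shows that the canonical family $\gamma_0=\{u_{r,\theta}\}$ lies in $\Pi_\Sigma$. Hence
\[
w(\Pi_\Sigma)\leq \sup_{D^2}\mathrm{Area}(\Sigma_{r,\theta})=\mathcal{V}^C_c(\Sigma).
\]
If $\mathcal{V}^C_c(\Sigma)\geq 6\pi$ we are done, since $m_0<6\pi$. Otherwise $w(\Pi_\Sigma)<6\pi$, and Proposition \ref{nontrivial} gives $w(\Pi_\Sigma)>2\pi$. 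Corollary \ref{existencecor} then produces a minimal M\"obius band spanning $C$ with area $w(\Pi_\Sigma)$. This band is in $\mathcal{M}$, so $m_0\leq w(\Pi_\Sigma)\leq \mathcal{V}^C_c(\Sigma)$.

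\emph{Equality case.} If $\Sigma$ is a conformal dilate $F_v(\Sigma_0)$ with $\Sigma_0\in\mathcal{M}^*$, then Li--Yau \eqref{liyauminimal}, applied to the Schwarz-reflected closed surface together with \eqref{twicegood} and \eqref{double}, together with Proposition \ref{soufi}, should give $\mathcal{V}^C_c(\Sigma)=\mathrm{Area}(\Sigma_0)=m_0$. One caveat: $\mathcal{V}_c$ is dilation invariant, but $\mathcal{V}^C_c$ is only invariant under dilations $F_{v(r,\theta)}$ that preserve $C$. So "conformal dilate" here must be read as dilation within this family, which is presumably what is intended.

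Conversely, suppose $\mathcal{V}^C_c(\Sigma)=m_0$. Then $\sup_{D^2}\mathrm{Area}(\Sigma_{r,\theta})=m_0$, and Theorem \ref{rigidity} applies directly: $\Sigma$ is a conformal dilate of a minimal M\"obius band of area $m_0$, that is, of an element of $\mathcal{M}^*$. For equality with $\mathcal{V}_c$, note $m_0=\mathcal{V}_c\geq\mathcal{V}^C_c\geq m_0$, so $\mathcal{V}^C_c(\Sigma)=m_0$ and the same argument applies.

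\emph{Main obstacle.} The delicate step is the equality direction for $\mathcal{V}_c$. We must check that for $\Sigma_0\in\mathcal{M}^*$ the full conformal volume, taken over all of $B^4$ rather than just the $C$-preserving disk, equals $m_0$ and not something up to $2m_0$. The plan is to use Li--Yau's identity \eqref{liyauargument} for the minimal band with its boundary term. Since $\partial F_v(\Sigma_0)$ is a round circle and not a geodesic for general $v$, this boundary term must be shown to be nonpositive, and I expect that to be the crux.
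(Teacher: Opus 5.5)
Your proof of the inequalities and of the ``only if'' half of the equality statement is the paper's proof. It runs as follows. Proposition \ref{eulerzero} handles $e(\Sigma)=0$. When $e(\Sigma)\neq 0$, you reduce to $\mathcal{V}^C_c(\Sigma)<6\pi$ and use the canonical family to bound $w(\Pi_\Sigma)\leq\mathcal{V}^C_c(\Sigma)$. Proposition \ref{nontrivial} then gives $w(\Pi_\Sigma)>2\pi$. Corollary \ref{existencecor}, which is what the paper's appeal to Theorem \ref{minmaxtheorem} amounts to, produces a minimal band of area $w(\Pi_\Sigma)$, and Theorem \ref{rigidity} handles equality.

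Your reduction to density $\frac12$ is a sensible addition that the paper makes silently. Note that the blow-up at a point of $C$ is effected by a $C$-preserving dilation $F_{v(r,\theta)}$. So it gives $\mathcal{V}^C_c(\Sigma)\geq 6\pi$, which is what the $e(\Sigma)\neq 0$ case needs.

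Your ``if'' direction for $\mathcal{V}^C_c$ is correct and goes beyond what the paper writes. It writes $\Sigma'=F_v(\Sigma_0')$, valid because $F_v$ commutes with the reflection for $v\in\mathrm{span}(\mathbf{e}_1,\mathbf{e}_2)$, and combines \eqref{twicegood} with Li--Yau for the closed double.

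The step that would fail is your plan for the ``main obstacle'': no sign argument for the boundary term can work in general. Let $\Sigma_0$ be minimal with $\partial\Sigma_0=C$ and outward conormal $\eta$. Then \eqref{variation} and $\Delta_{\Sigma_0}x=-2x$ give
\begin{equation*}
\frac{d}{dt}\Big|_{t=0}\mathrm{Area}(F_{tv}(\Sigma_0))=\int_C\langle X_v,\eta\rangle\,ds=-2\Big\langle v,\int_C\eta\,ds\Big\rangle=4\Big\langle v,\int_{\Sigma_0}x\,dA\Big\rangle .
\end{equation*}
Along $C$ we have $\eta\in\mathrm{span}(\mathbf{e}_3,\mathbf{e}_4)$, so this vanishes for $v\in\mathrm{span}(\mathbf{e}_1,\mathbf{e}_2)$. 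For $v\in\mathrm{span}(\mathbf{e}_3,\mathbf{e}_4)$, however, it vanishes only if $\int_{\Sigma_0}x_3=\int_{\Sigma_0}x_4=0$. Otherwise $\mathcal{V}_c(\Sigma_0)>\mathrm{Area}(\Sigma_0)$ already at first order. The hemisphere shows this genuinely occurs: $\mathcal{V}_c(H)=4\pi>2\pi=\mathrm{Area}(H)$.

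For $\bar\tau_{1,2}$ these moments vanish by the symmetry $(z,w)\mapsto(e^{i\psi}z,e^{2i\psi}w)$. Nothing in the paper forces this for an arbitrary element of $\mathcal{M}^*$. The paper's proof does not address this direction either; it only shows that equality implies being a dilate. So state the ``if'' direction only for $\mathcal{V}^C_c$, where you have proved it. For the full $\mathcal{V}_c$ it needs an additional input, such as vanishing of these moments.
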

\begin{proof}
When $e(\Sigma)=0$ the first claim follows from the fact that $m_0<6\pi$ and by Proposition \ref{eulerzero}. 

As $m_0<6\pi$ we may, without loss of generality assume for the second claim that
\begin{equation}
\mathcal{V}_c^C(\Sigma)<6\pi.  
\end{equation}
Note that as observed above
\begin{equation}
\sup_{(\theta,t)\in D^2}\mbox{Area}(\Sigma_{r,\theta})=\mathcal{V}_c^C(\Sigma)\leq \mathcal{V}_c(\Sigma). 
\end{equation}
Thus if $\Pi_\Sigma$ is the homotopy class associated to the conformal family, then
\begin{equation}
w(\Pi_\Sigma)\leq \mathcal{V}^C_c(\Sigma)<6\pi. 
\end{equation}
As $e(\Sigma)\neq 0$, Proposition \ref{nontrivial}, implies
\begin{equation}2\pi <w(\Pi_\Sigma).\end{equation}  
Thus we obtain from Theorem \ref{minmaxtheorem} that there exists a minimal M\"obius band $\Gamma$ with boundary $C$ so that
\begin{equation}
\mbox{Area}(\Gamma)=w(\Pi_\Sigma).
\end{equation}
By Theorem \ref{leastareamb} we have
\begin{equation}
\mbox{Area}(\Gamma_0)\leq \mbox{Area}(\Gamma), 
\end{equation}
where $\Gamma_0$ is an infimal area minimal M\"obius band with boundary $C$.  

In the case of equality, we get $m_0=\mathcal{V}_c^C(\Sigma)$.  By Theorem \ref{rigidity}, $\Sigma$ is a conformal dilate of an infimal area minimal M\"obius band with boundary $C$.
\end{proof}
We obtain directly from Theorem \ref{conflowerbounds}, \eqref{twicegood} and \eqref{better} the following;
\begin{cor}\label{doublesit}
Suppose $\Sigma$ is an immersed Klein bottle in $\mathbb{S}^3$ containing the closed geodesic $C$ and invariant under Schwarz reflection through $C$.  Then 
\begin{equation}
\mathcal{V}_c(\Sigma)\geq 2m_0
\end{equation}
with equality if and only if $\Sigma$ is a conformal dilate of an element in $\mathcal{M}^*$ preserving $C$.  
\end{cor}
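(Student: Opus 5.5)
The plan is to reduce Corollary \ref{doublesit} to Theorem \ref{conflowerbounds}, applied to the half of $\Sigma$ cut out by $C$. Since $\Sigma$ is invariant under the Schwarz reflection $\tau$ through $C$ and contains $C$, the set $\Sigma\setminus C$ splits into two pieces interchanged by $\tau$. I would first check that each closure $\Sigma_0$ is an immersed M\"obius band spanning $C$, so that $\Sigma=\Sigma_0\cup\tau(\Sigma_0)=\Sigma_0'$.

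This topological step is where I expect the main obstacle. A Klein bottle cut along a two-sided essential curve fixed by an involution can give either two M\"obius bands or an annulus. I would rule out the annulus by noting that $\tau$ reverses the co-normal to $C$, so the fixed curve separates $\Sigma$. Then Euler characteristic additivity, $0=\chi(\Sigma_0)+\chi(\tau\Sigma_0)$, together with non-orientability of $\Sigma$, forces each half to be a M\"obius band (or a non-orientable surface of the same type). I would also need the density $\tfrac12$ condition on $C$ for $\Sigma_0$; this I would take from the invariance and the immersion being embedded near $C$.

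Given this, I split by Euler number. If $e(\Sigma_0)\neq0$, Theorem \ref{conflowerbounds} gives $\mathcal{V}^C_c(\Sigma_0)\geq m_0$. Combining \eqref{twicegood} and \eqref{better} then gives
\begin{equation*}
\mathcal{V}_c(\Sigma)\geq\mathcal{V}_c^C(\Sigma)=2\mathcal{V}_c^C(\Sigma_0)\geq 2m_0 .
\end{equation*}
If $e(\Sigma_0)=0$, Proposition \ref{eulerzero} shows $\Sigma_0$ has a triple point away from $C$. So $\Sigma$ has a triple point, and Li--Yau's argument (dilating at that point) gives $\mathcal{V}_c(\Sigma)\geq 12\pi>2m_0$, since $m_0<6\pi$.

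For equality, $\mathcal{V}_c(\Sigma)=2m_0$ forces $e(\Sigma_0)\neq0$, since the other case is strict. It also forces equality throughout the chain, so $\mathcal{V}^C_c(\Sigma_0)=m_0$. The equality case of Theorem \ref{conflowerbounds}, which comes from Theorem \ref{rigidity}, then says $\Sigma_0$ is a conformal dilate $F_{v(r,\theta)}(\Gamma)$ of some $\Gamma\in\mathcal{M}^*$. Because $F_{v(r,\theta)}$ preserves $C$ and commutes with $\tau$ (both $v$ and $\tau$ act only in the $(x_3,x_4)$ variables), $\Sigma=\Sigma_0\cup\tau\Sigma_0$ is the same dilate of $\Gamma\cup\tau\Gamma$.

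Conversely, for such a dilate, Li--Yau \eqref{liyauminimal} gives $\mathcal{V}_c(\Gamma')=2\,\mathrm{Area}(\Gamma)=2m_0$, and conformal volume is invariant under dilation. This gives the "if" direction.
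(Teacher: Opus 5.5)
Your main chain, $\mathcal{V}_c(\Sigma)\geq\mathcal{V}_c^C(\Sigma)=2\mathcal{V}_c^C(\Sigma_0)\geq 2m_0$ via Theorem \ref{conflowerbounds}, \eqref{twicegood} and \eqref{better}, with the equality case coming from Theorem \ref{rigidity} and the converse from Li--Yau, is exactly the paper's one-line proof. Your separate treatment of $e(\Sigma_0)=0$ is correct, and it is in fact necessary. The $e=0$ clause of Theorem \ref{conflowerbounds} only gives $\mathcal{V}_c(\Sigma)\geq\mathcal{V}_c(\Sigma_0)\geq 6\pi$, which does not beat $2m_0\geq 8\pi$. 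So one really needs the triple point from the proof of Proposition \ref{eulerzero} and the resulting $12\pi$ bound, which is what you use.

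Two of your auxiliary justifications fail as stated, although the paper simply builds their conclusions into the hypothesis.

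\emph{Separation.} Reversal of the co-normal along $C$ is a local property and does not force the fixed curve to separate. On $K=\bigl(S^1\times[-1,1]\bigr)/\bigl((\theta,1)\sim(-\theta,-1)\bigr)$, the involution $(\theta,t)\mapsto(\theta,-t)$ fixes the non-separating two-sided curve $\{t=0\}$ and swaps its sides; it also has two isolated fixed points. What does work, assuming the fixed set of the abstract involution $\sigma$ is exactly the circle $\gamma$ over $C$, is a quotient argument.
\begin{itemize}
\item $K/\sigma$ is a compact surface whose only boundary circle is $\gamma$, and $\chi(K/\sigma)=\tfrac12\chi(K)=0$. Hence $K/\sigma$ is a M\"obius band $N$.
\item $K\setminus\gamma\to\mathrm{int}(N)$ is a double cover.
\item If this cover were connected, it would be the orientation cover. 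Then $K$ would be an annulus with both boundary circles glued to $\gamma$ by the covering map, which is a torus.
\item So the cover is trivial, and $K=N\cup_\gamma\sigma(N)$.
\end{itemize}

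\emph{Density.} Embeddedness near $C$ is not among the hypotheses, so the density $\tfrac12$ condition needed by Theorem \ref{conflowerbounds} and Proposition \ref{eulerzero} has to be argued. This is easy. If an interior point $q$ of $\Sigma_0$ maps into $C$, then $q$, $\sigma(q)$, and the boundary preimage of the same point are three distinct preimages. Hence $\mathcal{V}_c(\Sigma)\geq 12\pi>2m_0$, exactly as in your $e=0$ case.

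\emph{Minor.} $v(r,\theta)$ lies in the $(x_1,x_2)$-plane, not the $(x_3,x_4)$-plane. $F_{v(r,\theta)}$ commutes with $\tau$ because $\tau$ is a linear isometry fixing $v(r,\theta)$.
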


Recall that the Willmore energy $\mathcal{W}(\Sigma)$ for a surface $\Sigma$ with boundary is given by 
\begin{equation}\label{willmoredef}
\mathcal{W}(\Sigma)=\int_\Sigma(1+\frac{1}{4}|\mathbf{H}_\Sigma|^2)dA + \int_{\partial\Sigma}k_g dL.
\end{equation}
Thus when $\partial\Sigma$ is a geodesic, i.e., $k_g=0$, 
\begin{equation}
 \mathcal{W}(\Sigma)\geq \mathrm{Area}(\Sigma). 
\end{equation}
Moreover, as $\Sigma'$ is a $C^{1,1}$ closed surface, its Willmore energy is well defined and satisfies
\begin{equation}
\mathcal{W}(\Sigma')= \int_{\Sigma'} (1+\frac{1}{4}|\mathbf{H}_\Sigma|^2) dA= \mathcal{W}(\Sigma)+\mathcal{W}(\tau(\Sigma))= 2\mathcal{W}(\Sigma).
\end{equation}

Let us prove 

\begin{thm}
Let $\Sigma$ be a M\"obius band immersed in $\mathbb{S}^3$ with boundary $C$.  Then 
\begin{equation}\label{willmorebound}
\mathcal{W}(\Sigma)\geq m_0
\end{equation}
with equality if and only if $\Sigma$ is a conformal dilate of an element of $\mathcal{M}^*$.
\end{thm}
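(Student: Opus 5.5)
The plan is to reduce the Willmore bound to the conformal-volume bound of Theorem \ref{conflowerbounds}, using that the Willmore energy dominates the area of every conformal dilate preserving $C$. The key identity is the conformal invariance of $\int (|\mathring{\mathbf{A}}|^2)\,dA$ together with Gauss--Bonnet, exactly as in the proof of Lemma \ref{EmbNearCLem}. For each $(r,\theta)\in\mbox{int}(D^2)$ the map $F_{v(r,\theta)}$ preserves $C$ by \eqref{boundarystll}. Hence $\partial\Sigma_{r,\theta}=C$ is a geodesic and the boundary term $\int_{\partial\Sigma_{r,\theta}}k_g\,dL$ vanishes for every member of the canonical family, including $r=0$. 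Gauss--Bonnet on the M\"obius band gives
$$\mathcal{W}(\Sigma_{r,\theta})=\frac12\int_{\Sigma_{r,\theta}}|\mathring{\mathbf{A}}|^2dA+2\pi\chi(M)-\int_{\partial}k_g\,dL .$$
Since both the trace-free term and $\chi(M)$ are conformally invariant, and the boundary terms vanish, we get $\mathcal{W}(\Sigma_{r,\theta})=\mathcal{W}(\Sigma)$.

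Consequently
$$\mathcal{W}(\Sigma)=\mathcal{W}(\Sigma_{r,\theta})\geq \mathrm{Area}(\Sigma_{r,\theta})$$
for all $(r,\theta)$, so $\mathcal{W}(\Sigma)\geq \mathcal{V}_c^C(\Sigma)$. One could alternatively double via Schwarz reflection: $\mathcal{W}(\Sigma')=2\mathcal{W}(\Sigma)$ and Li--Yau give $2\mathcal{W}(\Sigma)\geq \mathcal{V}_c(\Sigma')\geq \mathcal{V}_c^C(\Sigma')=2\mathcal{V}_c^C(\Sigma)$ by \eqref{twicegood}, \eqref{better}. This avoids redoing Gauss--Bonnet with boundary, but needs Li--Yau for $C^{1,1}$ surfaces, which I would justify by approximation.

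Then I split by the Euler number. If $e(\Sigma)\neq0$ (after arranging density $\frac12$ on $C$, which holds for an immersion whose boundary map is an embedding onto $C$), Theorem \ref{conflowerbounds} gives $\mathcal{V}_c^C(\Sigma)\geq m_0$, hence $\mathcal{W}(\Sigma)\geq m_0$. If $e(\Sigma)=0$, Proposition \ref{eulerzero} gives a triple point in the interior. Either Li--Yau applied to the doubled surface (a triple point forces $\mathcal{W}(\Sigma')\geq 12\pi$), or the dilation argument of Lemma \ref{EmbNearCLem} centered at the triple point, yields $\mathcal{W}(\Sigma)\geq 6\pi>m_0$. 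This is strict, so equality never occurs in this case.

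For equality, suppose $\mathcal{W}(\Sigma)=m_0$. Then $e(\Sigma)\neq 0$ and $m_0\leq\mathcal{V}_c^C(\Sigma)\leq\mathcal{W}(\Sigma)=m_0$, so the equality case of Theorem \ref{conflowerbounds}, via Theorem \ref{rigidity}, shows that $\Sigma$ is a conformal dilate of an element of $\mathcal{M}^*$. Conversely, for a dilate $F(\Gamma)$ of a minimal $\Gamma\in\mathcal{M}^*$, conformal invariance gives $\mathcal{W}(F(\Gamma))=\mathcal{W}(\Gamma)=\mathrm{Area}(\Gamma)=m_0$. This uses that the dilation preserves $C$, so the boundary term stays zero; I would interpret ``conformal dilate'' as a member of the canonical family.

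The main obstacle I expect is regularity and the boundary term. The statement assumes only a $C^2$ immersion, so the doubled surface is merely $C^{1,1}$. The conformal invariance of $\int|\mathring{\mathbf A}|^2$ and Gauss--Bonnet with geodesic boundary must be checked at this regularity, and the density-$\frac12$ hypothesis needed for Theorem \ref{conflowerbounds} must be verified. I would handle the first by working directly on $\Sigma$, which is $C^2$ up to the boundary, rather than on the double.
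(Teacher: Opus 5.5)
Your argument is essentially the paper's: conformal invariance of $\mathcal{W}$ under the $C$-preserving dilations gives $\mathcal{W}(\Sigma)\geq\mathcal{V}_c^C(\Sigma)$, so Theorem \ref{conflowerbounds} settles $e(\Sigma)\neq 0$ and the equality case, while for $e(\Sigma)=0$ the triple point of Proposition \ref{eulerzero} together with Li--Yau on the double gives $\mathcal{W}(\Sigma)\geq 6\pi>m_0$ (the paper writes this as $\frac12\mathcal{V}_c(\Sigma')\geq\frac12\mathcal{V}_c(\Sigma)\geq 6\pi$, which amounts to the same thing). Two harmless slips: first, with the paper's convention the boundary term in $\mathcal{W}$ cancels the Gauss--Bonnet one, so $\mathcal{W}=\frac12\int|\mathring{\mathbf{A}}|^2dA+2\pi\chi(M)$ with no extra $-\int k_g\,dL$, which is irrelevant here since $k_g\equiv 0$ on $C$; second, density $\frac12$ on $C$ does not follow from the boundary map being an embedding, because interior points may map into $C$, but any such point is a triple point of the double, so Li--Yau again gives $\mathcal{W}(\Sigma)\geq 6\pi>m_0$ (the paper leaves this hypothesis tacit).
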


\begin{proof}
If $e(\Sigma)=0$, then the conformal invariance of the Willmore energy of $\Sigma'$ implies
$$
\mathcal{W}(\Sigma)=\frac{1}{2}\mathcal{W}(\Sigma')\geq \frac{1}{2} \mathcal{V}_c(\Sigma')\geq \frac{1}{2} \mathcal{V}_c(\Sigma)\geq 6\pi>m_0.
$$
This gives \eqref{willmorebound} when $e(\Sigma)=0$.  Assume next that $e(\Sigma)\neq 0$.  Then by the conformal invariance of the Willmore energy \eqref{willmoredef} with boundary we get
\begin{equation}
\mathcal{W}(\Sigma_{r,\theta})=\mathcal{W}(\Sigma)\mbox{ for all }  (r,\theta)\in D^2.
\end{equation}
 Hence,
\begin{equation}
\mathcal{V}_c^C(\Sigma)=\sup_{(r,\theta)\in\mbox{int}(D^2)}\mbox{Area}(\Sigma_{r,\theta})\leq \sup_{(r,\theta)\in\mbox{int}(D^2)}\mathcal{W}(\Sigma_{r,\theta})=\mathcal{W}(\Sigma).  
\end{equation}
As such, when $e(\Sigma)\neq 0$, the bound \eqref{willmorebound} follows from Theorem \ref{conflowerbounds} as does the equality case.
\end{proof}

Finally, let us prove 
\begin{thm}
Let $\Sigma$ be a Klein bottle immersed in $\mathbb{S}^3$ containing the geodesic $C$ and invariant with respect to Schwarz reflection through $C$. Then
\begin{equation}\label{doublelower}
\mathcal{W}(\Sigma)\geq 2m_0
\end{equation}
with equality if and only if $\Sigma$ is a conformal dilate of an element in $\mathcal{M}^*$.
\end{thm}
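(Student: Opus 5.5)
The plan is to reduce to the M\"obius band statement just proved, via the symmetry. Since $\Sigma$ is invariant under the Schwarz reflection $\tau$ through $C$ and contains $C$, the geodesic $C$ cuts $\Sigma$ into two pieces $\Sigma_0$ and $\tau(\Sigma_0)$, each bounded by $C$. We will argue that $\Sigma_0$ is an immersed M\"obius band spanning $C$: $\Sigma$ is a Klein bottle, $\tau$ is an involution swapping the two sides of $C$, so $\chi(\Sigma_0)=\tfrac12\chi(\Sigma)=0$ with one boundary component. Moreover $\Sigma_0$ must be non-orientable, since otherwise $\Sigma=\Sigma_0\cup\tau(\Sigma_0)$ would be the double of an orientable annulus glued along one boundary circle, which is not a Klein bottle. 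Hence $\Sigma_0$ is a M\"obius band.

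Next, the Willmore energy splits. Along $C$ the boundary term $\int_C k_g\,dL$ vanishes because $C$ is a geodesic of $\mathbb{S}^3$. As in the computation of $\mathcal{W}(\Sigma')$ preceding the previous theorem, we get
\begin{equation*}
\mathcal{W}(\Sigma)=\mathcal{W}(\Sigma_0)+\mathcal{W}(\tau(\Sigma_0))=2\mathcal{W}(\Sigma_0),
\end{equation*}
since $\tau$ is an isometry. Applying the previous theorem to $\Sigma_0$ then gives $\mathcal{W}(\Sigma)\ge 2m_0$. If equality holds, then $\mathcal{W}(\Sigma_0)=m_0$, so $\Sigma_0$ is a conformal dilate $F(\Gamma)$ of some $\Gamma\in\mathcal{M}^*$. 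Since $F$ maps $\partial\Gamma=C$ onto $C$, it commutes with $\tau$ up to conjugation; more precisely, $F\circ\tau\circ F^{-1}$ is the inversion fixing $C$ pointwise, which is $\tau$ itself. Therefore $\Sigma=F(\Gamma\cup\tau(\Gamma))$, a conformal dilate of the Schwarz reflection of $\Gamma$. Conversely, for such $\Sigma$ the conformal invariance of $\mathcal{W}$ for closed surfaces, together with $\mathcal{W}(\Gamma\cup\tau\Gamma)=2\,\mathrm{Area}(\Gamma)=2m_0$ (the union is minimal), gives equality.

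The main obstacle is regularity and the hypotheses needed to invoke the previous theorem. That theorem, through Theorem \ref{conflowerbounds} and Proposition \ref{limits}, uses that $\Sigma_0$ is $C^2$ up to the boundary, that $\partial\Sigma_0\to C$ is an embedding, and that the density of $|\Sigma_0|$ on $C$ is $\tfrac12$. If $\Sigma$ is an immersion meeting $C$ with extra sheets, I would first try to reduce to this case. Choose $\Sigma_0$ as the component of $\Sigma\setminus\tau$-fixed set with boundary curve covering $C$ once. If $C$ is covered with higher multiplicity, or other sheets pass through $C$, I would show $\mathcal{W}(\Sigma)\ge\mathcal{V}_c(\Sigma)\ge 12\pi>2m_0$ using the Li--Yau bound at a point of multiplicity at least $2$ on $C$; this uses $\mathcal{V}_c(\Sigma)\ge 8\pi$ at a double point, so a more careful count, possibly using the $\tau$-symmetry to produce triple points, is needed to beat $2m_0<12\pi$. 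This density reduction is the step I expect to require the most care.
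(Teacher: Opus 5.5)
Your proof is correct, but it follows a different route from the paper's.

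\textbf{The paper's route.} The paper never splits the Willmore energy of the Klein bottle. It first proves the conformal-volume bound $\mathcal{V}_c(\Sigma)\geq 2m_0$ (Corollary \ref{doublesit}). That bound comes from $\mathcal{V}_c(\Sigma)\geq\mathcal{V}_c^C(\Sigma)=2\mathcal{V}_c^C(\Sigma_0)$ together with Theorem \ref{conflowerbounds}, with Proposition \ref{eulerzero} handling $e(\Sigma_0)=0$. The paper then applies the Li--Yau inequality $\mathcal{W}(\Sigma)\geq\mathcal{V}_c(\Sigma)$ for closed immersed surfaces, and reads off the equality case from the corollary.

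\textbf{Your route.} You use $k_g\equiv 0$ along the geodesic $C$ to get $\mathcal{W}(\Sigma)=2\mathcal{W}(\Sigma_0)$, and then apply the M\"obius band theorem (Theorem \ref{willmoreband}) to the half.

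\textbf{Comparison.} Both routes rest on the same core, Theorem \ref{conflowerbounds}. The paper's route gives, as a by-product, the strictly stronger conformal-volume bound for symmetric Klein bottles. Yours needs only additivity at this step and makes the equality case more transparent. In particular, your observation that $F\tau F^{-1}=\tau$ for any conformal $F$ preserving $C$ is exactly what shows $\Sigma$ is a dilate of the doubled band, a point the paper leaves implicit.

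\textbf{Two small remarks.}

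First, non-orientability of $\Sigma_0$ follows at once from $\chi(\Sigma_0)=0$ with one boundary circle. An orientable surface with one boundary component has odd Euler characteristic, so the annulus discussion is unnecessary.

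Second, the paper does not address the density reduction you flag as the main obstacle. It implicitly reads the hypothesis as saying $\Sigma$ is the double of a band spanning $C$ with density $\tfrac12$ there. Your suggested fix works. Suppose an interior point $q$ of the half maps to $p\in C$. Then $q$, its image under the domain involution, and the point of the fixed circle over $p$ are three distinct preimages of $p$. So $p$ is a triple point, and Li--Yau gives $\mathcal{W}(\Sigma)\geq 12\pi>2m_0$.
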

\begin{proof}
By Corollary \ref{doublesit}, we get
\begin{equation}
\mathcal{V}_c(\Sigma)\geq 2\mbox{Area}(\Gamma_0).  
\end{equation}
By a result of Li-Yau (Lemma 1 in \cite{LY}) for a closed immersed surface $\Sigma$ in $\mathbb{S}^3$ there holds
\begin{equation}
 \mathcal{W}(\Sigma)\geq \mathcal{V}_c(\Sigma).
\end{equation}
The bound \eqref{doublelower} then follows from Corollary \ref{doublesit}, as does the equality case.
\end{proof}

Post-composing $\tilde{F}$ with a conformal map of $(A_b,g_{A_b})$, we can assume the renormalized $\tilde{F}$ induces the same marking as $j_b^{-1}$.


\section{Mollification}\label{appendix}

\subsubsection{Preliminaries}
Let $\Sigma$ be a surface with one boundary component and some fixed background Riemannian metric. Given a $W^{1,2}$ map $u:\Sigma\to\mathbb{S}^3$, the differential $du_x:T_x(\Sigma)\to T_{u(x)}\mathbb{S}^3$ is defined almost everywhere.  Denote by $J_u$ the Jacobian of the map $u$.  By the Cauchy-Schwartz inequality, $J_u\in L^1(\Sigma)$.  Thus to the map $u$ is associated a $2$-varifold that we denote $|u(\Sigma)|$ as follows.  For any $f\in C(G_2(\mathbb{S}^3))$ we define:
\begin{equation}\label{VarifoldToMap}
|u(\Sigma)|(f)= \int_\Sigma f(u(x), du_x(T_x\Sigma))J_u(x) d\mu_\Sigma.
\end{equation}
where here $\mu_\Sigma$ is the Riemannian volume density associated to the fixed metric.
Note that where $du_x(T_x\Sigma)$ fails to be a $2$-plane, $J_u$ vanishes. 
We denote the mass of the associated $2$-varifold $|u(\Sigma)|$ by:
\begin{equation}
\mbox{Area}(u)=\mbox{Area}(|u(\Sigma)|)=\int_\Sigma J_ud\mu_\Sigma.
\end{equation}



We have the following (cf. Proposition A.3 in \cite{CM1}):

\begin{lemma}\label{varcont}
If the sequence $\{u_i\}_{i=1}^\infty\in W^{1,2}(\Sigma,\mathbb{S}^3)$ converges to $u_\infty$ in the $W^{1,2}(\Sigma,\mathbb{S}^3)$ (strong) topology, then 
$$
|u_i(\Sigma)|\to |u_\infty(\Sigma)| 
$$
in the varifold weak topology.

In particular, given $\phi\in W^{1,2}\cap C^0(\Sigma,\mathbb {S}^3)$ spanning $C$ and $\eta>0$ there exists $r_1=r_1(\phi, \eta)>0$ so that if $||\phi-\phi'||_{W^{1,2}(M_1)}\leq r_1$ and $\phi'$ spans $C$ then 
\begin{equation}
\mathbf{F}(|\phi(\Sigma),|\phi'(\Sigma)|)\leq \eta. 
\end{equation}
\end{lemma}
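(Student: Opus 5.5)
The plan is to prove the lemma in the spirit of Proposition A.3 of \cite{CM1}. First we show that the Jacobians converge in $L^1$, then we control the Grassmannian component of the varifold by dominated convergence. For the Jacobians, write $J_u=|\partial_1 u\wedge \partial_2 u|$ in local orthonormal frames for the background metric. The pointwise estimate
\begin{equation*}
\bigl|J_{u_i}-J_{u_\infty}\bigr|\leq |du_i-du_\infty|\,\bigl(|du_i|+|du_\infty|\bigr)
\end{equation*}
together with Cauchy--Schwarz then gives $\|J_{u_i}-J_{u_\infty}\|_{L^1}\leq \|du_i-du_\infty\|_{L^2}\bigl(\|du_i\|_{L^2}+\|du_\infty\|_{L^2}\bigr)\to 0$. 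In particular the masses $\mathrm{Area}(u_i)$ converge to $\mathrm{Area}(u_\infty)$ and are uniformly bounded.

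Next, fix $f\in C(G_2(\mathbb{S}^3))$, which is bounded and uniformly continuous. Any subsequence of $u_i$ has a further subsequence along which $u_i\to u_\infty$ and $du_i\to du_\infty$ almost everywhere; the latter uses strong $L^2$ convergence of the differentials. It suffices to prove convergence along such sub-subsequences. Split $|u_i(\Sigma)|(f)-|u_\infty(\Sigma)|(f)$ into two terms:
\begin{equation*}
\int_\Sigma f(u_i,du_i(T\Sigma))(J_{u_i}-J_{u_\infty})\,d\mu_\Sigma+\int_\Sigma \bigl(f(u_i,du_i(T\Sigma))-f(u_\infty,du_\infty(T\Sigma))\bigr)J_{u_\infty}\,d\mu_\Sigma .
\end{equation*}
The first term is at most $\sup|f|\,\|J_{u_i}-J_{u_\infty}\|_{L^1}\to 0$. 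For the second, observe that on the set $\{J_{u_\infty}>0\}$ the plane $du_\infty(T_x\Sigma)$ is genuinely $2$-dimensional. Since $du_i(x)\to du_\infty(x)$, the planes $du_i(T_x\Sigma)$ converge in the Grassmannian for a.e.\ such $x$, because the map from rank-two linear maps to their image planes is continuous. Hence the integrand tends to zero a.e.\ and is dominated by $2\sup|f|\,J_{u_\infty}\in L^1$, so dominated convergence finishes the argument. The set $\{J_{u_\infty}=0\}$ contributes nothing to the second term. This is the only delicate point, and it is handled simply by weighting that term with $J_{u_\infty}$ rather than $J_{u_i}$. Since every subsequence has a sub-subsequence converging to the same limit, the full sequence converges in the varifold weak topology.

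For the ``in particular'' statement, argue by contradiction. Suppose there are $\phi'_i$ spanning $C$ with $\|\phi-\phi_i'\|_{W^{1,2}}\to 0$ but $\mathbf{F}(|\phi(\Sigma)|,|\phi'_i(\Sigma)|)>\eta$. By the first part, $|\phi'_i(\Sigma)|\to|\phi(\Sigma)|$ weakly, and the masses are uniformly bounded by the Jacobian estimate. As recalled in the Notation section, the $\mathbf{F}$-metric induces the weak topology on varifolds of bounded mass, so $\mathbf{F}(|\phi(\Sigma)|,|\phi'_i(\Sigma)|)\to 0$, which is a contradiction. The spanning hypothesis on $\phi'$ plays no role in this estimate; it is carried along only for later use.
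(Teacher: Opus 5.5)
Your proposal is correct and follows essentially the same route as the paper: you prove $L^1$ convergence of the Jacobians, pass to a.e.\ convergence of $u_i$ and $du_i$ along a subsequence, and weight the Grassmannian term by $J_{u_\infty}$. The differences are cosmetic. You use dominated convergence where the paper applies Egorov's theorem to the measure $J_{u_\infty}\,d\mu_\Sigma$, and you write out the sub-subsequence step and the ``in particular'' clause (via the $\mathbf{F}$-metric metrizing weak convergence on bounded-mass sets), both of which the paper leaves implicit.
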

\begin{proof}  Fix a $f\in C(G_2(\mathbb{S}^3))$, which is automatically uniformly continuous.
We note that the nature of the convergence of $u_i$ implies $J_{u_i}\to J_u$ in $L^1(\Sigma)$. Moreover, up to passing to a subsequence we may assume $u_i$ and $du_i$ converge a.e. to $u_\infty$ and $du_{\infty}$ respectively. Now for given $\epsilon>0$, let $\delta>0$ be chosen so that $||f||_\infty  \delta<\epsilon$.    By Egorov's theorem applied to the measure $J_{u_\infty} d\mu_\Sigma$, there exists a set $A\subset \Sigma$ so that $u_i\to u_\infty$ and $du_i\to du_\infty$ uniformly on $\Sigma\setminus A$ and $J_{u_\infty}\mu_\Sigma(A)<\delta$ -- here $\mu_\Sigma$ is the Riemannian volume density associated to the fixed background metric.  It follows that for $i$ sufficiently large, 
$$|\left| u_i(\Sigma)|(f)-|u_\infty(\Sigma)\right|(f)| <3\epsilon.$$
The first claim follows from this. 
\end{proof}

\subsubsection{Mollification}
Let $\rho\in C_c^\infty(\mathbb{R}^2,\mathbb{R})$ be rotationally symmetric with $\mbox{supp}(\rho)\subset B_1(0)\subset\mathbb{R}^2\cong\mathbb{C}$, and satisfying $\int_{D^2}\rho d\mu=1$. Set $\rho_\varepsilon(x):=\frac{1}{\varepsilon^2}\rho(x/\varepsilon)$ for each $\varepsilon>0$ so that $\mbox{supp}(\rho_\varepsilon)\subset B_\varepsilon(0)$.  

For $\phi\in W^{1,2}\cap C^0(M_1,\mathbb{S}^3)$ spanning $C$ let us denote by $\tilde{\phi}\in W^{1,2}\cap C^0(K_1,\mathbb{S}^3)$ the map obtained by Schwarz reflecting about $C$:
\begin{equation}\label{defdouble}
\tilde{\phi}(x)=
\begin{cases}
\phi(x), & x\in M_1,\\[0.4em]
R\bigl(\phi(\tau_1((x)\bigr), & x\in K_1\setminus M_1.
\end{cases}
\end{equation}
Note that for $x\in \mbox{Fix}(\tau_1)=\partial M_1$ we have 
\begin{equation}
R(\phi(\tau_1(x)))=R(\phi(x))=\phi(x), 
\end{equation}
from which the continuity over $\partial M_1$ follows.  
From the definition \eqref{defdouble} we have
\begin{equation}\label{theycommute}
\tilde{\phi}\circ\tau_1(x)=R\circ \tilde{\phi}(x)\mbox{ for } x\in K_1.
\end{equation}

Extend $\tilde{\phi}$ to $\mathbb{C}$ (not relabelled) by pulling back via the covering map $\pi:\mathbb{C}\to K_1$.
For $\varepsilon>0$, let us set 
\begin{equation}\label{convdef}
\tilde{\phi}_\varepsilon = \rho_\varepsilon\ast\tilde{\phi},
\end{equation}
where the convolution is conducted component-wise in $\mathbb{R}^4$.  

By the rotational symmetry of $\rho_\varepsilon$, convolution commutes with any isometry $I$ of $\mathbb{C}$ in the deck group of the covering $\pi$ of $K_1$:
\begin{equation}
I \circ \tilde{\phi}_\varepsilon=I \circ (\rho_\varepsilon\ast\tilde{\phi})=\rho_\varepsilon\ast\tilde{\phi}(I(\cdot))=\rho_\varepsilon\ast\tilde{\phi}=\tilde{\phi}_\varepsilon.
\end{equation}
Thus $\tilde{\phi}_\varepsilon$ descends to an $\mathbb{R}^4$-valued function $W^{1,2}\cap C^0(K_1,\mathbb{R}^4)$.

Let us also define the projection map
\begin{equation}
\Pi:\mathbb{R}^4\setminus \{\mathbf{0}\}\to\mathbb{R}^4,
\end{equation}
with range $\mathbb{S}^3$ given by \begin{equation}\Pi(x) = \frac{x}{|x|}.\end{equation} 

Finally, for any $\varepsilon>0$ (small enough, to be specified later) we denote the \emph{$\varepsilon$-mollification $M_\varepsilon(\phi)\in C^\infty(M_1,\mathbb{S}^3)$ of $\phi$} by
\begin{equation}
M_\varepsilon(\phi)=(\Pi\circ \tilde{\phi}_\varepsilon)|_{M_1}.
\end{equation}
Note again that since $\rho_\varepsilon$ is rotationally symmetric, the mollification operation commutes with the isometry $\tau_1$ of $K_1$ and we have from \eqref{theycommute}
\begin{equation}\label{stillcommute}
\tilde{\phi_\varepsilon}\circ\tau_1(x)=R\circ \tilde{\phi}_\varepsilon(x)\mbox{ for } x\in K_1.
\end{equation}
If $x\in\partial M_1$, there holds $x=\tau_1(x)$ and thus from \eqref{stillcommute} we obtain
\begin{equation}\label{boundaryfixed}
\tilde{\phi}_\varepsilon(x) = \tilde{\phi}_\varepsilon(\tau_1(x))= R\circ\tilde{\phi}_\varepsilon(x).
\end{equation}
Since the fixed set $\mbox{Fix}_{\mathbb{R}^4}(R)$ of $R$ is the two-dimensional subspace of $\mathbb{R}^4$ containing $C$, \eqref{boundaryfixed} gives that $\tilde{\phi}_\varepsilon(x)$ lies in this subspace when $x\in\partial M_1$.  Thus since $\Pi(\mbox{Fix}_{\mathbb{R}^4}(R)\setminus \{\mathbf{0}\})\subset C$ we obtain for all $\varepsilon$ small enough \begin{equation}M_\varepsilon(\phi)(x)\in C\mbox{ for } x\in \partial M_1.\end{equation} 

Next we collect properties of the mollification operation:

\begin{lemma}\label{mollprop} Suppose $\phi\in W^{1,2}\cap C^0(M_1,\mathbb{S}^3)$ spans $C$. Then there exists $\varepsilon_0=\varepsilon_0(\phi)>0$ and $C_\phi>1$ so that the following holds. For  $0<\varepsilon<\varepsilon_0(\phi)$:
\begin{enumerate}[(a)]
\item $M_\varepsilon(\phi)\in C^\infty(M_1,\mathbb{S}^3)$.
\item $M_\varepsilon(\phi)|_{\partial M_1}$ is a degree $1$ map to $C$.
\item $||M_\varepsilon(\phi)-\phi||_{W^{1,2}\cap C^0(M_1)}\leq \nu(\phi,\varepsilon)$ with $\nu(\phi,\varepsilon)$ decreasing to $0$ as $\varepsilon\to 0$.
\item If $\phi'\in W^{1,2}\cap C^0(M_1,\mathbb{S}^3)$ satisfies $||\phi-\phi'||_{W^{1,2}\cap C^0(M_1)}\leq \frac{1}{4}$ and $\phi'$ spans $C$ then there holds \begin{equation}||M_\varepsilon(\phi)-M_\varepsilon(\phi')||_{W^{1,2}\cap C^0(M_1)}\leq C_\phi ||\phi-\phi'||_{W^{1,2}\cap C^0(M_1)},\end{equation} and $M_\varepsilon(\phi')$ spans $C$ for any $0\leq \varepsilon\leq \varepsilon_0(\phi)$. 
\end{enumerate}

\end{lemma}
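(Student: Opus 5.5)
The approach is to work on the Schwarz-reflected map $\tilde{\phi}$ on the doubled surface $K_1$, where standard mollification estimates are available, and then transfer them back to $M_1$ through the normalization $\Pi$. The key preliminary is a quantitative lower bound on $|\tilde{\phi}_\varepsilon|$. Since $\phi$ is continuous on the compact surface $M_1$, so is $\tilde{\phi}$ on $K_1$; its lift to $\mathbb{C}$ is uniformly continuous with modulus $\omega_\phi$. Because $|\tilde{\phi}|=1$ and $\rho_\varepsilon$ is a probability kernel supported in $B_\varepsilon$, we get
\[
\bigl|\tilde{\phi}_\varepsilon(x)-\tilde{\phi}(x)\bigr|\leq \omega_\phi(\varepsilon),
\]
and hence $|\tilde{\phi}_\varepsilon|\geq 1-\omega_\phi(\varepsilon)\geq \tfrac12$ once $\varepsilon<\varepsilon_0(\phi)$. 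This lower bound is what makes $\Pi\circ\tilde{\phi}_\varepsilon$ well defined and smooth, since $\Pi$ is smooth with bounded derivatives of all orders on $\{|x|\geq\tfrac12\}$. That gives (a).

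For (c), I would use that $\Pi$ is Lipschitz on $\{|x|\geq \tfrac12\}$ and $\Pi\circ\tilde\phi=\tilde\phi$, so the $C^0$ error of $M_\varepsilon(\phi)-\phi$ is at most a constant times $\omega_\phi(\varepsilon)$. For the $W^{1,2}$ part, write
\[
d(\Pi\circ\tilde{\phi}_\varepsilon)-d\tilde\phi = d\Pi_{\tilde\phi_\varepsilon}\bigl(d\tilde\phi_\varepsilon-d\tilde\phi\bigr)+\bigl(d\Pi_{\tilde\phi_\varepsilon}-d\Pi_{\tilde\phi}\bigr)d\tilde\phi .
\]
The first term tends to $0$ in $L^2$ by the standard convergence $\rho_\varepsilon*f\to f$ in $L^2$ on a fundamental domain, using periodicity. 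The second term is bounded by $C\,\omega_\phi(\varepsilon)|d\tilde\phi|$. Taking $\nu$ to be the supremum over $\varepsilon'\leq\varepsilon$ of these bounds makes it monotone.

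For (b), I would use that $M_\varepsilon(\phi)$ maps $\partial M_1$ into $C$, as already shown. It is $C^0$-close to $\phi|_{\partial M_1}$, within distance less than $2$, so the straight-line homotopy followed by normalization stays in $C$ and avoids the antipodal degeneration. Hence the degree is preserved and equals $1$.

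For (d), I would first note that $\|\tilde\phi-\tilde\phi'\|_{C^0}\leq\tfrac14$ and the bound $|\tilde\phi_\varepsilon|\geq\tfrac12$ together give $|\tilde\phi'_\varepsilon|\geq\tfrac14$. Then $\Pi$ has uniform $C^1$ and $C^2$ bounds on the relevant region. Convolution is linear and contracts both $C^0$ and $W^{1,2}$ norms, and reflection only doubles norms. The chain-rule decomposition above, applied to $\tilde\phi_\varepsilon$ and $\tilde\phi'_\varepsilon$, bounds the difference by
\[
C\bigl(1+\|d\tilde\phi_\varepsilon\|_{L^2}\bigr)\,\|\phi-\phi'\|_{W^{1,2}\cap C^0},
\]
and $\|d\tilde\phi_\varepsilon\|_{L^2}\leq 2\|d\phi\|_{L^2}$. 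This yields the constant $C_\phi$. The spanning property of $M_\varepsilon(\phi')$ follows exactly as in (b).

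I expect the main obstacle to be the uniform lower bound on $|\tilde{\phi}'_\varepsilon|$ in (d) with an $\varepsilon_0$ depending only on $\phi$. It is handled as above by combining $\omega_\phi$ with the hypothesis $\|\phi-\phi'\|_{C^0}\leq\tfrac14$.
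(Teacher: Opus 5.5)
Your proposal is correct and follows essentially the same route as the paper: pass to the reflected map on $K_1$, and get a uniform lower bound on $|\tilde{\phi}_\varepsilon|$ from $C^0$ convergence of the mollifications. You then push the $C^0$, $L^2$ and gradient estimates through $\Pi$ using the same chain-rule splitting, and handle (d) by Young's inequality together with the induced lower bound on $|\tilde{\phi}'_\varepsilon|$. The differences are cosmetic: you use thresholds $\tfrac12$ and $\tfrac14$ in place of the paper's $\tfrac34$ and $\tfrac12$, and you give an explicit normalized straight-line homotopy argument for the degree in (b) and for the spanning of $M_\varepsilon(\phi')$ in (d), which the paper leaves implicit.
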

\begin{proof}
By standard properties of convolutions, $\tilde{\phi}_\varepsilon\to \tilde{\phi}$ in $C^0(K_1,\mathbb{R}^4)$ as $\varepsilon\to 0$.  Since $\tilde{\phi}$ has target $\mathbb{S}^3$, there exists $\varepsilon_0=\varepsilon(\phi)>0$ so that 
\begin{equation}\label{above}
|\tilde{\phi}_\varepsilon|\geq \frac{3}{4}, 
\end{equation}
for all $0<\varepsilon<\varepsilon_0$.  Note that on the region $\Omega_{1/2}=\{|x|\geq 1/2\}\subset\mathbb{R}^4$, the projection map $\Pi$ has uniformly bounded derivative:
\begin{equation}\label{proj}
|d_x\Pi(v)|\leq 2|v|\mbox{ for all } x\in\Omega_{1/2}\mbox{ and } v\in\mathbb{R}^4.
\end{equation}
We also have for some $C>0$:
\begin{equation}\label{proj2}
|d_x\Pi(v)-d_y\Pi(v)|\leq C|x-y|\cdot |v|\mbox{ for } x,y\in \Omega_{1/2}\mbox{ and } v\in \mathbb{R}^4.
\end{equation}
Since $\tilde{\phi}$ is $\mathbb{S}^3$-valued we get
\begin{equation}
||\Pi(\tilde{\phi}_\varepsilon)-\tilde{\phi}||_{C^0(K_1)}=||\Pi(\tilde{\phi}_\varepsilon)-\Pi(\tilde{\phi})||_{C^0(K_1)}.
\end{equation}
Thus using \eqref{proj} and the fundamental theorem of calculus we obtain
\begin{equation}\label{c0}
||\Pi(\tilde{\phi}_\varepsilon)-\tilde{\phi}||_{C^0(K_1)}\leq C||\tilde{\phi}_\varepsilon-\tilde{\phi}||_{C^0(K_1)}.
\end{equation}
Similarly, 
\begin{equation}\label{l2}
||\Pi(\tilde{\phi}_\varepsilon)-\tilde{\phi}||_{L^2(K_1)}\leq C||\tilde{\phi}_\varepsilon-\tilde{\phi}||_{L^2(K_1)}.
\end{equation}
By the chain rule we obtain for the ($\mathbb{R}^4$-valued) gradient
\begin{equation}
\nabla (\Pi\circ\tilde{\phi_\varepsilon})(x)= d_{\tilde{\phi}_\varepsilon(x)}\Pi\circ \nabla\tilde{\phi}_\varepsilon, 
\end{equation}Thus we can write
\begin{equation}\label{gradient}
\begin{aligned}
\nabla(\Pi\circ\widetilde{\phi}_\varepsilon)
-
\nabla\widetilde{\phi}
&=
d_{\widetilde{\phi}_\varepsilon}\Pi\circ \nabla\widetilde{\phi}_\varepsilon
-
d_{\widetilde{\phi}}\Pi\circ\nabla\widetilde{\phi} \\
&=
d_{\widetilde{\phi}_\varepsilon}\Pi\circ
\bigl(
\nabla\widetilde{\phi}_\varepsilon
-
\nabla\widetilde{\phi}
\bigr) \\
&\quad+
\bigl(
d_{\widetilde{\phi}_\varepsilon}\Pi
-
d_{\widetilde{\phi}}\Pi
\bigr)\circ 
\nabla\widetilde{\phi}.
\end{aligned}
\end{equation}
We obtain from \eqref{gradient} using \eqref{proj} and \eqref{proj2}:
\begin{equation}\label{grad}
||\nabla(\Pi(\tilde{\phi}_\varepsilon))-\nabla \tilde{\phi}||_{L^2(K_1)}\leq C||\nabla\tilde{\phi}_\varepsilon-\nabla\tilde{\phi}||_{L^2(K_1)}+C||\tilde{\phi}_\varepsilon-\tilde{\phi}||_{C^0(K_1)}\cdot||\nabla\tilde{\phi}||_{L^2(K_1)}, 
\end{equation}
where $C$ is independent of $\phi$ and $\varepsilon$.  Taken together, equations \eqref{c0}, \eqref{l2} and \eqref{grad} imply
\begin{equation}\label{boundhere}
||M_\varepsilon(\phi)-\phi||_{W^{1,2}\cap C^0(M_1)}\leq C_\phi||\tilde{\phi}_\varepsilon-\tilde{\phi}||_{W^{1,2}\cap C^0(K_1)}.
\end{equation}
The equation \eqref{boundhere} gives item (c) with 
\begin{equation}
\nu(\phi, \varepsilon):=C_\phi\cdot\sup_{0<s\leq\varepsilon} ||\tilde{\phi}_\varepsilon-\tilde{\phi}||_{W^{1,2}\cap C^0(K_1)}, 
\end{equation}
which is manifestly non-increasing as $\varepsilon$ is decreased.  Note that \eqref{c0} also implies item (b) (shrinking $\varepsilon_0$ if necessary) as it gives that $M_\varepsilon(\phi)\to \phi$ in $C^0(\partial M_1)$ as $\varepsilon\to 0$ and $\phi|_{\partial M_1}$ is a degree $1$ map to $C$ by assumption and the degree is unchanged under uniform limits. 

To see item (d), observe that if $||\phi-\phi'||_{C^0(M_1)}\leq\frac{1}{4}$, then also
\begin{equation}
\sup_{x\in K_1}|\tilde{\phi}_\varepsilon(x)-\tilde{\phi'_\varepsilon}(x)|\leq \frac{1}{4}\cdot||\rho_\varepsilon||_{L^1(K_1)}=\frac{1}{4}
\end{equation}
By \eqref{above} and the triangle inequality we obtain from this that
\begin{equation}\label{alsolower}
|\tilde{\phi'_\varepsilon}(x)|\geq |\tilde{\phi}_\varepsilon(x)|-|\tilde{\phi'_\varepsilon}(x)-\tilde{\phi_\varepsilon}(x)|\geq \frac{3}{4}-\frac{1}{4}=\frac{1}{2}.
\end{equation}
Analogously to \eqref{grad} we obtain (as by \eqref{alsolower} $\tilde{\phi'_\varepsilon}$ also maps to $\Omega_{1/2}$):
\begin{equation}
||\nabla(\Pi(\tilde{\phi}_\varepsilon))-\nabla(\Pi(\tilde{\phi'_\varepsilon}))||_{L^2(K_1)}\leq C||\nabla\tilde{\phi}_\varepsilon-\nabla\tilde{\phi'_\varepsilon}||_{L^2(K_1)}+C||\tilde{\phi}_\varepsilon-\tilde{\phi'_\varepsilon}||_{C^0(K_1)}\cdot||\nabla\tilde{\phi_\varepsilon}||_{L^2(K_1)}, 
\end{equation}
for $C>0$ independent of $\phi$ and $\varepsilon$.  Note also that (recalling the definition \eqref{convdef}):
\begin{equation}
||\tilde{\phi_\varepsilon}-\tilde{\phi'_\varepsilon}||_{C^0(K_1)}\leq ||\phi'-\phi||_{C^0(K_1)}, 
\end{equation}
and by Young's inequality for convolutions
\begin{equation}
||\nabla\tilde{\phi_\varepsilon}-\nabla\tilde{\phi'_\varepsilon}||_{L^2(K_1)}\leq ||\rho_\varepsilon||_{L^1(K_1)}\cdot||\nabla \tilde{\phi'}-\nabla \tilde{\phi}||_{L^2(K_1)} =||\nabla \tilde{\phi'}-\nabla \tilde{\phi}||_{L^2(K_1)} .
\end{equation}
Similarly by Young's inequality, 
\begin{equation}
||\tilde{\phi_\varepsilon}-\tilde{\phi'_\varepsilon}||_{L^2(K_1)}\leq ||\tilde{\phi'}- \tilde{\phi}||_{L^2(K_1)}. 
\end{equation}
Since 
\begin{equation}
||\tilde{\phi}-\tilde{\phi'}||_{W^{1,2}\cap C^0(K_1)}\leq \sqrt{2}||\phi-\phi'||_{W^{1,2}\cap C^0(M_1)},
\end{equation}
we get from the preceding inequalities
\begin{equation}\label{changeof}
  ||M_\varepsilon(\phi)-M_\varepsilon(\phi')||_{W^{1,2}\cap C^0(M_1)}\leq C_\phi||\phi-\phi'||_{W^{1,2}\cap C^0(M_1)}, 
\end{equation}
for all $\varepsilon\in (0,\varepsilon_0)$, where (increasing it if necessary) $C_\phi>1$ depends on $\phi$ but not $\varepsilon$.
This completes the proof of item (d).
\end{proof}

\section{Problems}\label{problems}
We collect some problems and questions related to the results of this paper.  

The following is natural (though a new degree argument related to, but distinct from, Proposition \ref{nontrivial} would likely be needed):   

\begin{question}
Can one use the mean curvature flow in $\mathbb{S}^3$ to prove Theorems \ref{mainintro} and \ref{ConfAreaLBThm}?  
\end{question}

See the work of B. White \cite{White3pi} for study of mean curvature flow with boundary and the forcing of suitable singularities.  

Since the conformal family of a non-orientqble surface is non-trivial for all positive Euler numbers (cf. Proposition \ref{nontrivial} and Remark \ref{sameforgenus}), developing a boundary version of the min-max theory for higher genus surfaces obtained by Zhou \cite{Zhougenera}, one should be able to show the following:
\begin{question}\label{betterversion}
The area of a non-orientable minimal surface in $\mathbb{S}^3$ spanning $C$ with non-zero Euler number is bounded from below by that of an embedded non-orientable minimal surface with index $2$ and boundary $C$. 
\end{question}
Note that the min-max surface one obtains as a lower bound in Question \ref{betterversion} could have Euler number zero.  We expect, on the other hand:
\begin{question}
The only minimal surfaces in $\mathbb{S}^3$ bounded by $C$ with Euler number zero are the hemispheres. 
\end{question}

Hardt-Simon \cite{HS} ruled out the existence of orientable and embedded minimal surfaces with boundary $C$ other than the hemispheres. The following however is still open:
\begin{question}
Does $C$ bound any \emph{immersed} orientable minimal surface aside from the hemispheres.
\end{question}
We may ask the following analog of Theorem \ref{mainintro} for higher genera surfaces (analogous to a long-standing conjecture of Kusner in the closed setting):
\begin{question}
For $m$ an even integer at least $3$, show that the least area non-orientable embedded minimal surface of genus $m$ bounded by $C$ is the surface $\tilde{\xi}_{1,m-1}$ (constructed in \cite{BernKetExistence}).
\end{question}
For $0<\lambda<\frac{\pi}{2}$, restricting the canonical family to \begin{equation}H_\lambda(C):=\mathbb{S}^3\setminus T_\lambda(C),\end{equation} gives a $2$-parameter sweepout of the genus one handlebody $H_\lambda(C)$ associated to the connected component of $\mathcal{Z}_2(H_\lambda(C),\partial H_\lambda(C); \mathbb{Z}_2)$, the space of mod $2$ relative cycles, that contains a meridian disk.  

When $\lambda>\frac{\pi}{4}$, $H_\lambda(C)$ is mean convex and the study of the free boundary $p$-widths (\cite{MNW}) is a well-posed problem (\cite{LiZhouFree}, \cite{GuangLiWangZhouFree}).  Thus one can ask:
\begin{question}
Determine the (low) free boundary $p$-widths of the handlebody $H_\lambda(C)$ for $\frac{\pi}{4}\leq\lambda<\frac{\pi}{2}$. 
\end{question}
With regard to the Lawson band:
\begin{question}
Do restrictions of the Lawson M\"obius band $\bar{\tau}_{1,2}$ to $H_\lambda(C)$ solve a variational problem in the mean convex handlebodies $H_\lambda(C)$?  
\end{question}
One can also consider free boundary min-max surfaces in the \emph{non} mean-convex domains $H_\lambda(C)$ (where one obtains smooth free boundary minimal surfaces which may not be properly embedded (\cite{LiZhouFree})) and study their limit as $\lambda\to 0$. 

In a related, perhaps simpler, geometry, the following problem is natural:
\begin{question}
Classify minimal M\"obius bands and annuli in $\mathbb{D}^2\times\mathbb{S}^1$.
\end{question}
Note that the quotients of restrictions of suitable dilations of the helicoid in $\mathbb{R}^3$ give examples of such M\"obius bands and annuli. For constructions of minimal M\"obius bands in other rotationally symmetric solid tori, see \cite{Schulz}.  

Closed minimal surfaces in $\mathbb{S}^3$ and free boundary minimal surfaces in the Euclidean three-ball share deep connections to spectral geometry and extremal eigenvalue problems (\cite{Berger1973, Nadirashvili1996, ElSoufiIlias2000,FSSteklov}).  
\begin{question}
Is there a relationship between the theory of minimal surfaces spanning $C$ and spectral geometry?
\end{question}
We note that for free boundary minimal surfaces in the handlebody $H_\lambda(C)$ the coordinate functions are Steklov eigenfunctions for appropriate Steklov eigenvalues.

On a related topic, note that the Lawson band $\bar{\tau}_{1,2}$ is foliated or ruled by geodesic meridians.  While the minimizers for various systolic functional on M\"obius bands are known (cf. \cite{SabourauYassine2016} and the references therein), one can ask:
\begin{question}
Does the Lawson band $\bar{\tau}_{1,2}$ have any criticality property with respect to   systolic functionals on M\"obius bands?
\end{question}

Finally, we have the following question (attributed in the lecture notes \cite{LectureNotesOtis} to B. White):
\begin{conj}
The cone over $\bar{\tau}_{1,2}$ in $\mathbb{R}^4$ is area-minimizing.  
\end{conj}
During the final preparation of this manuscript a solution to this conjecture appeared \cite{guaracoWhitesConeMobius2026}.  One may then pose a special case of Conjecture \ref{secondArea}:
\begin{conj}
The cone over $\bar{\tau}_{1,2}$ in $\mathbb{R}^4$ is the lowest density boundary singularity in $\mathbb{R}^4$.
\end{conj}
 \printbibliography
\end{document}